\definecolor{hanblue}{rgb}{0.27, 0.42, 0.81}
\definecolor{red}{rgb}{1.0, 0.0, 0.0}
\newcommand*{\uthe}{\mathbf{u}(\theta)}
\newcommand*{\duthe}{\mathbf{u}'(\theta)}
\newcommand*{\constant}{c}
\newcommand*{\de}{\,\ensuremath{\mathrm d}}
\theoremstyle{plain}
\newtheorem{thm}{Theorem}[section]
\newtheorem{lem}[thm]{Lemma}
\newtheorem{prop}[thm]{Proposition}
\newtheorem{cor}[thm]{Corollary}
\theoremstyle{definition}
\newtheorem{defn}[thm]{Definition}
\newtheorem*{teocit}{Theorem}
\newtheorem*{lemcit}{Lemma}
\theoremstyle{remark}
\newtheorem{rem}[thm]{Remark}
\numberwithin{equation}{section}
\begin{document}
\title{The Fourier transform of planar convex bodies\\and discrepancy over intervals of rotations}
\author{Thomas Beretti}
\affil{International School for Advanced Studies (SISSA)}
\date{\today}
\maketitle

\begin{abstract}
\noindent 
This work studies the Fourier transform of the characteristic function of planar convex bodies averaged over affine transformations. We establish lower and upper bounds on the latter quantities in terms of the geometric properties of the bodies considered. The second matter of study is the affine quadratic discrepancy of planar convex bodies, and we present sharp results on its asymptotic behaviour. In particular, we address averages over intervals of rotations, answering an open question of Bilyk and Mastrianni.
\end{abstract}

\tableofcontents

\section{Introduction}

The theory of irregularities of distribution, also known as discrepancy theory, concerns the approximation of the Lebesgue measure through samplings by Dirac deltas. One can equivalently pose such a question in an Euclidean space or a periodic setting. We introduce some basic notations for the latter. For a real positive number $p$, we define the one-dimensional torus with period $p$ and the unitary two-dimensional torus, respectively, as
\begin{equation*}
\mathbb{T}_p=\mathbb{R}/p\mathbb{Z}\quad\text{and}\quad\mathbb{T}^2=\mathbb{R}^2/\mathbb{Z}^2.
\end{equation*}
Moreover, we define the \emph{ordered-distance function}
        \begin{equation}\label{D1}
        \eta_p\colon\mathbb{T}_p\times\mathbb{T}_p\to[0,p)
        \end{equation}
        in such a way that
	 \begin{equation*}
	 \eta_p(x_1,x_2)=y\quad\text{if and only if}\quad x_1+y\equiv x_2\!\!\!\pmod p.
	 \end{equation*}
\par
 We introduce convenient notation on limit behaviours. Consider an unbounded set $U\subseteq[0,+\infty)$ and let $f$ and $g$ be two positive functions defined on $U$, then we say that it holds
 \begin{equation}\label{rel} 
     f(x)\preccurlyeq g(x)
 \end{equation}
 to intend that there exists a positive value $\constant$ such that
 \begin{equation*}
     \limsup_{x\to+\infty}\frac{f(x)}{g(x)}\leq \constant.
 \end{equation*}
 Moreover, in the case of $f_y$ and $g_y$ depend on a variable $y\in V\subseteq \mathbb{R}$, then we say that \eqref{rel} holds uniformly for every $y\in V$ to intend that the involved value $c$ does not depend on $y$.
 Last, if \eqref{rel} holds in both senses, then we say that it holds
 \begin{equation*}
     f(x)\asymp g(x).
 \end{equation*}
 \par
 We introduce suitable notation on affine transformations of the Euclidean plane. For an angle $\theta\in\mathbb{T}_{2\pi}$, we let $\sigma_\theta\colon\mathbb{R}^2\to\mathbb{R}^2$ be the counterclockwise rotation by $\theta$, and we set 
    \begin{equation*}
        \uthe=(\cos\theta, \sin\theta)
    \end{equation*}
    to be the unit vector in $\mathbb{R}^2$ that makes an angle $\theta$ with the $x$-axis. We let $\boldsymbol{\tau}\in\mathbb{R}^2$ be a translation factor and $\delta\geq0$ be a dilation factor. For a bounded set $\Omega\subset\mathbb{R}^2$, we define the action of an affine transformations on $\Omega$ by
    \begin{equation*}
    [\boldsymbol{\tau},\delta,\theta]\Omega=\boldsymbol{\tau}+\delta\sigma_{\theta}\Omega,
    \end{equation*}
with the convention that if a transformation is null, we omit its writing in the square brackets. Moreover, we let $\mathds{1}_{\Omega}$ stand for the characteristic (indicator) function of $\Omega$. We define the Fourier transform of $\mathds{1}_{\Omega}$ as
\begin{equation*}
    \widehat{\mathds{1}}_\Omega(\boldsymbol{\xi})=\int_{\Omega}e^{-2\pi i \mathbf{x}\cdot\boldsymbol{\xi}}\de \mathbf{x},
\end{equation*}
and from classic properties of the Fourier transform, we get that
\begin{equation}\label{FourierProp}
    \widehat{\mathds{1}}_{[\delta,\theta]\Omega}(\boldsymbol{\xi})=\delta^2\widehat{\mathds{1}}_\Omega(\delta\sigma_{-\theta}\boldsymbol{\xi}).
\end{equation}
\par 
We introduce a tool that allows us to switch from an Euclidean setting to a periodic one. Consider the periodization functional ${\mathfrak{P}}\colon L^1(\mathbb{R}^2)\to L^1(\mathbb{T}^2)$ defined in the sense that
\begin{equation*}
    {\mathfrak{P}}\{\mathds{1}_\Omega\}(\mathbf{x})=\sum_{\mathbf{n}\in\mathbb{Z}^2}\mathds{1}_\Omega(\mathbf{x}+\mathbf{n}).
\end{equation*}
We give the following notions of discrepancy.
\begin{defn}
Let $\Omega\subset\mathbb{R}^2$ and let $\mathcal{P}_N\subset\mathbb{T}^2$ be a set of $N$ points. We define the \emph{discrepancy} of $\mathcal{P}_N$ with respect to $\Omega$ as 
    \begin{equation}\label{Discrepancy1}
    \mathcal{D}(\mathcal{P}_N,\, \Omega)=\sum_{\mathbf{p}\in\mathcal{P}_N}{\mathfrak{P}}\{\mathds{1}_\Omega\}(\mathbf{p})-N|\Omega|.
\end{equation}
Further, let $I\subseteq\mathbb{T}_{2\pi}$ be an interval of angles. We define the \emph{affine quadratic discrepancy} of $\mathcal{P}_N$ with respect to $\Omega$ and $I$ as
\begin{equation}\label{Discrepancy2}
    \mathcal{D}_2(\mathcal{P}_N,\, \Omega,\,I)=\int_{I}\int_{0}^{1}\int_{\mathbb{T}^2}\left|\mathcal{D}( \mathcal{P}_N,\,[\boldsymbol{\tau},\delta,\theta]\Omega)\right|^2d\boldsymbol{\tau}\de \delta\de \theta.
\end{equation}
In particular, we will always assume that the interval of angles $I$ is such that $|I|>0$ (that is, $I$ is non-trivial).
\end{defn}

This paper aims to explore the affine quadratic discrepancy of planar convex bodies, namely, bounded convex sets of $\mathbb{R}^2$ with a non-empty interior. This question is deeply related to the asymptotic behaviour of the Fourier transform of the bodies considered. In turn, obtaining optimal estimates on the latter recovers meaningful geometric quantities. Before addressing our results on the Fourier transform, which the reader may find of independent interest, we describe the ones on the discrepancy.

\section{Main results: Discrepancy over Affine Transformations}

In Section~\ref{S3}, we prove our main results on the affine quadratic discrepancy. It turns out that the best estimates depend solely on the measure of the interval of angles considered and on a geometric quantity of the body. First, we give auxiliary definitions concerning points at the boundary.
    \begin{defn}\label{D2}
         Let $C\subset\mathbb{R}^2$ be a convex body. We set
    \begin{equation*}
        \boldsymbol{\Gamma}_C\colon \mathbb{T}_{\left|\partial C\right|}\to\mathbb{R}^2
    \end{equation*}
    to be the arc-length parameterization of the boundary $\partial C$. For $s\in\mathbb{T}_{\left|\partial C\right|}$, we define the \emph{set of normals} at $s$ as
    \begin{equation*}
    \nu_C(s)=\left[\nu_C^-(s),\nu_C^+(s)\right]=\left\{\theta\in\mathbb{T}_{2\pi}\,\colon\,\min_{\mathbf{a}\in C}\left(\mathbf{a}\cdot\uthe\right)=\boldsymbol{\Gamma}(s)\cdot\uthe\right\},
    \end{equation*}
    with the convention that if $\nu_C(s)$ is a single angle, then we simply consider
    \begin{equation*}
        \nu_C^-(s)=\nu_C^+(s)=\nu_C(s).
    \end{equation*}
    In particular, we say that $s\in\mathbb{T}_{\left|\partial C\right|}$ is an angular point if $\nu_C^-(s)\neq\nu_C^+(s)$.
    \end{defn}

    \begin{rem}
        Notice that if $C$ has a $\mathcal{C}^1$ boundary, then it has no angular point. Moreover, if $\partial C$ has strictly positive curvature, then $\nu_C$ is a bijection between $\mathbb{T}_{\left|\partial C\right|}$ and $\mathbb{T}_{2\pi}$. Last, if we choose $C$ to be an axis-symmetric square, then its vertices identify angular points whose sets of normals are $[(n-1)\pi/2, n\pi/2]$, with $n$ being an integer such that $1\leq n\leq4$. 
    \end{rem}
    
    For a generic set $A$, we write ${\rm int}(A)$ to denote its interior. Hence, we describe the aforementioned geometric quantity.
\begin{defn}\label{D3}
    Let $C\subset\mathbb{R}^2$ be a convex body. We define the \emph{angular trace} of $C$ as
    \begin{equation*}
    \mathcal{T}_C=\bigcup_{s\in\mathbb{T}_{|\partial C|}}{\rm int}\left(\nu_C(s)\right),
    \end{equation*}
    and further, we define the \emph{symmetric angular threshold} of $C$ as
    \begin{equation*}
    \psi_C=\max\left\{|J|\,\colon\, {J \text{ is a connected component of }\mathcal{T}_C}\cap\left(\mathcal{T}_C+\pi\right)\right\}.
    \end{equation*}
\end{defn}\par
\begin{rem}
    Notice that if $C$ has a centre of symmetry, then it holds
     \begin{equation*}
         \psi_C=\displaystyle\max_{s\in\mathbb{T}_{|\partial C|}}\left|\nu_C(s)\right|.
     \end{equation*}
    	Moreover, given an integer $n>1$, if $C$ is the regular polygon with $2n$ sides, then it follows that $\psi_C=(1-n^{-1})\pi$. On the other hand, if $C$ has a $\mathcal{C}^1$ boundary (that is, it has no angular points), then it follows that $\mathcal{T}_C=\varnothing$ and $\psi_C=0$. Last, notice that it always holds $0\leq\psi_C<\pi$.
    \end{rem}
It is time to state our two main results on the affine quadratic discrepancy. The first one shows that for averages over a large enough interval of rotations, we essentially get the same asymptotic order as for a complete rotation.
    \begin{thm}\label{t1}
    	Let $C\subset\mathbb{R}^2$ be a convex body, and let $I\subseteq\mathbb{T}_{2\pi}$ be an interval of angles such that $\psi_C<|I|\leq2\pi$. Then, it holds
    	\begin{equation*}
    	\inf_{\# \mathcal{P}=N}\mathcal{D}_2(\mathcal{P},\, C,\,I) \asymp N^{1/2}.
    	\end{equation*}
    \end{thm}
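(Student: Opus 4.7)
The plan is to diagonalize the quadratic discrepancy via Parseval in the translation variable $\boldsymbol{\tau}$, reducing the question to a weighted sum of squared exponential sums with kernel comparable to $|\mathbf{m}|^{-3}$. Combining \eqref{Discrepancy1}, \eqref{FourierProp}, and the Poisson expansion of $\mathfrak{P}\{\mathds{1}_\Omega\}$, one obtains the exact identity
\begin{equation*}
\mathcal{D}_2(\mathcal{P}_N,C,I)=\sum_{\mathbf{m}\in\mathbb{Z}^2\setminus\{0\}}|S_N(\mathbf{m})|^2\,F_C(\mathbf{m}),\qquad F_C(\mathbf{m})=\int_I\!\int_0^1\!\delta^4\bigl|\widehat{\mathds{1}}_C(\delta\sigma_{-\theta}\mathbf{m})\bigr|^2\de\delta\de\theta,
\end{equation*}
where $S_N(\mathbf{m})=\sum_{\mathbf{p}\in\mathcal{P}_N}e^{2\pi i\mathbf{m}\cdot\mathbf{p}}$. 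The key input to be supplied by the Fourier analysis of the preceding sections is the two-sided comparison $F_C(\mathbf{m})\asymp|\mathbf{m}|^{-3}$. The upper bound $F_C(\mathbf{m})\preccurlyeq|\mathbf{m}|^{-3}$ is routine: via a polar change of variables it follows from the classical averaged estimate $\int_{S^1}\bigl|\widehat{\mathds{1}}_C(r\mathbf{v})\bigr|^2\de\sigma(\mathbf{v})\preccurlyeq r^{-3}$, valid for every planar convex body. The lower bound is more delicate and it is here that $|I|>\psi_C$ enters: by Definition~\ref{D3}, the hypothesis supplies a positive-measure subset of $\theta\in I$ along which at least one of $\uthe$, $-\uthe$ is not an interior outward normal at any angular point of $\partial C$. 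Stationary phase at the corresponding regular portion of $\partial C$ then yields $\bigl|\widehat{\mathds{1}}_C(r\uthe)\bigr|^2\succcurlyeq r^{-3}$ in an averaged-in-$r$ sense, which integrates to the sought lower bound on $F_C(\mathbf{m})$.

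With $F_C(\mathbf{m})\asymp|\mathbf{m}|^{-3}$ in place, the upper bound on the infimum is obtained by testing the formula on the rescaled integer lattice $\mathcal{P}_N=N^{-1/2}\mathbb{Z}^2\cap[0,1)^2$ (assuming $N$ is a perfect square; a small perturbation handles the general case). Then $|S_N(\mathbf{m})|^2=N^2\mathds{1}_{\mathbf{m}\in\sqrt{N}\,\mathbb{Z}^2}$, so
\begin{equation*}
\mathcal{D}_2(\mathcal{P}_N,C,I)=N^2\!\!\sum_{\mathbf{k}\in\mathbb{Z}^2\setminus\{0\}}\!\!F_C(\sqrt{N}\,\mathbf{k})\preccurlyeq N^{1/2}\!\!\sum_{\mathbf{k}\ne 0}|\mathbf{k}|^{-3}\preccurlyeq N^{1/2},
\end{equation*}
since the series $\sum_{\mathbf{k}\in\mathbb{Z}^2\setminus\{0\}}|\mathbf{k}|^{-3}$ converges.

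For the matching lower bound, valid for an arbitrary $N$-point configuration, I use the tensor product Fej\'er weight $w_R(\mathbf{m})=\prod_{j=1,2}\max(1-|m_j|/R,\,0)$ with $R=2\lceil\sqrt{N}\,\rceil$. Its associated convolution kernel $K_R$ on $\mathbb{T}^2$ is pointwise non-negative with $K_R(\mathbf{0})=R^2$, so non-negativity of $K_R$ together with the diagonal contribution gives
\begin{equation*}
\sum_{\mathbf{m}\in\mathbb{Z}^2}w_R(\mathbf{m})|S_N(\mathbf{m})|^2=\sum_{\mathbf{p},\mathbf{q}\in\mathcal{P}_N}K_R(\mathbf{p}-\mathbf{q})\succcurlyeq NR^2\succcurlyeq N^2.
\end{equation*}
Subtracting the $\mathbf{m}=0$ term (equal to $N^2$) and absorbing constants still leaves $\sum_{\mathbf{m}\ne 0}w_R(\mathbf{m})|S_N(\mathbf{m})|^2\succcurlyeq N^2$; since $|\mathbf{m}|\preccurlyeq R$ and $w_R\leq 1$ on the support of $w_R$, the lower bound on $F_C$ yields
\begin{equation*}
\mathcal{D}_2(\mathcal{P}_N,C,I)\succcurlyeq R^{-3}\sum_{\mathbf{m}\ne 0}w_R(\mathbf{m})|S_N(\mathbf{m})|^2\succcurlyeq N^{-3/2}\cdot N^2=N^{1/2}.
\end{equation*}

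The main obstacle is precisely the Fourier lower bound $F_C(\mathbf{m})\succcurlyeq|\mathbf{m}|^{-3}$: this is the single step where the geometric hypothesis $|I|>\psi_C$ is used, and converting it into a clean pointwise-in-$\mathbf{m}$ bound requires a careful stationary phase argument that separates the contributions coming from angular points of $\partial C$ from those coming from the rest of the boundary. Once this ingredient is extracted from the Fourier analysis of the earlier sections, the theorem reduces to a clean packaging of Parseval, a lattice extremizer for the upper bound, and a Fej\'er positivity argument for the lower bound.
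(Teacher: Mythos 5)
Your overall architecture (Parseval in $\boldsymbol{\tau}$, a two-sided bound $F_C(\mathbf{m})\asymp|\mathbf{m}|^{-3}$ on the averaged kernel, a lattice for the upper bound, a positive-kernel argument for the lower bound) coincides with the paper's; your Fej\'er-weight argument is a legitimate stand-in for the Cassels--Montgomery lemma (Lemma~\ref{C-M}), which is built from exactly this kind of non-negative kernel with non-negative Fourier coefficients. The problem is the step you yourself flag as the main obstacle: the lower bound $F_C(\mathbf{m})\succcurlyeq|\mathbf{m}|^{-3}$, where your proposed mechanism does not work. You claim that $|I|>\psi_C$ supplies a positive-measure set of directions $\theta\in I$ that are not interior normals at angular points, and that stationary phase on the ``corresponding regular portion'' of $\partial C$ then gives the $\rho^{-3}$ decay. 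For a general convex body this fails twice over. First, stationary phase lower bounds need curvature bounded away from zero, and the regular part of $\partial C$ may be flat: for a square (the case of Bilyk--Mastrianni, with $\psi_C=\pi/2$, squarely inside the theorem's hypotheses) the set of directions not in $\mathcal{T}_C$ is \emph{finite}, hence of measure zero, so there is no positive-measure set of ``good'' directions at all, and no curved arc to apply stationary phase to. The actual source of the $\rho^{-3}$ average is not curvature but the rotational averaging of squared chord lengths near flat pieces and corners: this is Proposition~\ref{t5}, which identifies $\lim_{\lambda\to0}\frac{1}{2\lambda}\int_I|K^+_C(\theta,\lambda)|^2\de\theta$ with the portion of perimeter $P_C(I)$, combined with Lemma~\ref{t3} (the Podkorytov/Brandolini--Travaglini dilation average, which replaces pointwise stationary phase). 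Second, the hypothesis $|I|>\psi_C$ enters not pointwise but through a compactness argument (Lemma~\ref{lem2}): one needs $P_C(\omega+I)+P_C(\omega+I+\pi)$ bounded below \emph{uniformly in $\omega$}, and $|I|>\psi_C$ is exactly what rules out $(\omega+I)\cup(\omega+I+\pi)\subset\mathcal{T}_C$, i.e.\ the interval being swallowed by the normal cone of a single corner. Without this whole apparatus (Lemma~\ref{CN}), your kernel lower bound is unsupported, and with it the theorem follows as you outline.

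A secondary but genuine gap: for non-square $N$ you say ``a small perturbation handles the general case.'' Adding $k\approx\sqrt N$ arbitrary points contributes up to $k^2\sum_{\mathbf{m}\neq0}F_C(\mathbf{m})\asymp N$ to the squared discrepancy, which swamps the target $N^{1/2}$. The paper instead writes $N$ as a sum of four squares (Lagrange) and superposes four lattices, each contributing $\preccurlyeq N_i^{1/2}$; some such device is needed.
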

    Studying the asymptotic behaviour of the Fourier transform of $\mathds{1}_C$ is a fundamental step of the proof, and it is the purpose of Lemma~\ref{CN}. Then, the proof of the lower bound in Theorem~\ref{t1} requires an argument of Cassels \cite{MR0087709} and Montgomery \cite[Ch.~6]{MR1297543} for estimating exponential sums from below, and we present this in Lemma~\ref{C-M}. On the other hand, the upper bound is simple since it just requires unions of uniform lattices.
    \par
	 Our second main result concerns the complementary case of averages over small enough intervals of rotations. 
	\begin{thm}\label{t6}
	Let $C\subset\mathbb{R}^2$ be a convex body, and let the interval $I\subset\mathbb{T}_{2\pi}$ be such that $0<|I|<\psi_C$. It holds
	\begin{equation*}
	\inf_{\# \mathcal{P}=N}\mathcal{D}_2(\mathcal{P},\, C,\,I)\asymp N^{2/5}.
	\end{equation*}
	\end{thm}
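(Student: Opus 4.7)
My approach mirrors the Fourier-analytic framework established for Theorem~\ref{t1}, but applied in the complementary regime $|I|<\psi_C$ where the averaging over rotations fails to smooth out the slow-decay contribution of certain directions in the Fourier transform. My first step is a Plancherel expansion on $\mathbb{T}^2$ in the translation variable $\boldsymbol{\tau}$: combined with the scaling identity \eqref{FourierProp} it yields
\[
\mathcal{D}_2(\mathcal{P}_N, C, I) = \sum_{\mathbf{n} \in \mathbb{Z}^2 \setminus \{0\}} |S_{\mathcal{P}_N}(\mathbf{n})|^2 \,\mathcal{K}(\mathbf{n}),
\]
where $S_{\mathcal{P}_N}(\mathbf{n}) = \sum_{\mathbf{p}\in\mathcal{P}_N} e^{-2\pi i \mathbf{n}\cdot\mathbf{p}}$ and $\mathcal{K}(\mathbf{n}) = \int_I \int_0^1 \delta^4 |\widehat{\mathds{1}}_C(\delta\sigma_{-\theta}\mathbf{n})|^2 \de\delta\, \de\theta$. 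The problem is thus reduced to understanding the interplay between the exponential sum $|S_{\mathcal{P}_N}(\mathbf{n})|^2$ and the kernel $\mathcal{K}(\mathbf{n})$.

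I would then invoke Lemma~\ref{CN} to extract the large-$|\mathbf{n}|$ asymptotics of $\mathcal{K}(\mathbf{n})$. In the regime $|I|<\psi_C$, the angular averaging is ineffective at smoothing the Fourier transform in certain directions: depending on the direction of $\mathbf{n}$ relative to the angular trace $\mathcal{T}_C$ of the body and to the interval $I$, the angular integral in $\mathcal{K}(\mathbf{n})$ exhibits a decay in $|\mathbf{n}|$ that is strictly slower on a distinguished cone $\mathcal{A}\subset\mathbb{R}^2$ of frequencies. It is the slow decay on $\mathcal{A}$ that ultimately pins down the $N^{2/5}$ order of $\mathcal{D}_2$, with the quantitative exponent $2/5$ arising from coupling the resulting kernel asymptotic with the radial contribution of lattice frequencies.

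For the lower bound, I would apply the Cassels--Montgomery exponential-sum estimate of Lemma~\ref{C-M} to exhibit, for any $N$-point set $\mathcal{P}_N$, a frequency $\mathbf{n}^\ast$ in a prescribed annulus with $|S_{\mathcal{P}_N}(\mathbf{n}^\ast)|^2$ of order $N^2$. Steering the annulus to intersect the cone $\mathcal{A}$ and optimizing its radius against the kernel asymptotic then produces a lower bound for $\mathcal{D}_2$ of order $N^{2/5}$. For the upper bound, the uniform lattices that suffice for Theorem~\ref{t1} no longer match this lower bound: I would employ an orientation-adapted configuration, for instance an anisotropically scaled lattice rotated to a specific alignment with the boundary geometry of $C$, chosen so that its nonzero Fourier coefficients are smallest precisely on the cone $\mathcal{A}$. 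The two scale parameters of such a construction must be balanced so that the bad- and good-cone contributions to the Plancherel sum meet at the common order $N^{2/5}$.

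The principal difficulty lies in the upper bound: unlike the isotropic lattice construction underlying Theorem~\ref{t1}, here the extremal configuration must exploit the anisotropy induced by $I$ together with the boundary geometry of $C$, and the tuning of its parameters must be tight enough to avoid losing the $N^{2/5}$ bound to lower-order frequencies escaping the cone $\mathcal{A}$. A secondary technical issue is that, in order to extract the precise exponent $2/5$, one must carefully couple the angular and radial asymptotics of Lemma~\ref{CN} rather than use pointwise bounds on $|\widehat{\mathds{1}}_C|^2$ alone.
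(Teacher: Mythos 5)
Your overall architecture is the right one and matches the paper's: Parseval in $\boldsymbol{\tau}$, a splitting of frequency space into a cone where the $(\delta,\theta)$-averaged kernel decays like $\rho^{-3}$ and a complementary cone where it decays like $\rho^{-4}$, and a two-parameter anisotropic lattice for the upper bound tuned so that both cones contribute $N^{2/5}$. The upper-bound sketch is essentially the paper's construction (a $G\times L$ grid with $G\asymp N^{3/5}$, $L\asymp N^{2/5}$, rotated into alignment with the fast-decay cone), modulo two points you would still need to supply: the rotation must be by $\arctan(q_1/q_2)$ with $q_1/q_2$ rational so that the rotated grid has computable Fourier support on $\mathbb{T}^2$, and a decomposition of arbitrary $N$ into $O(1)$ such products plus a negligible remainder. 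Note also that the hypothesis $|I|<\psi_C$ enters precisely here: by Remark~\ref{r1} it guarantees an open cone $I_2$ of directions $\omega$ for which $\omega-\theta$ stays inside the interior of a set of normals for every $\theta\in I$, whence the $\rho^{-4}$ decay of the kernel on that cone; your sketch leaves this implicit.

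The genuine gap is in the lower bound. Lemma~\ref{C-M} does not produce a single frequency $\mathbf{n}^{*}$ with $|S_{\mathcal{P}_N}(\mathbf{n}^{*})|^2$ of order $N^2$: it gives $\sum_{\mathbf{n}\in\Omega}|S_{\mathcal{P}_N}(\mathbf{n})|^2\geq N|\Omega|/4-c_UN^2$, so one needs $|\Omega|\succcurlyeq N$ for the bound to be nontrivial, and pigeonholing then yields only one frequency with $|S|^2\succcurlyeq N$. More importantly, optimizing over an annulus (or disc) cannot reach $N^{2/5}$: a region of area $\succcurlyeq N$ that is rotation-invariant has radius $\succcurlyeq N^{1/2}$, and bounding the kernel below on all of it forces the worst-case rate $\rho^{-4}$, which yields only $N^2\cdot N^{-2}=O(1)$. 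The mechanism the paper uses (Theorem~\ref{t4}) is genuinely anisotropic: one weights the Cassels--Montgomery sum by $Z\Phi(\mathbf{m})$ with $\Phi(\mathbf{m})=\int_I\mathds{1}_{[\theta]R}(\mathbf{m})\,\de\theta$, where $R$ is a rectangle of dimensions $X\times Y$ with $X\asymp N^{3/5}$, $Y\asymp N^{2/5}$, $XY\asymp N$. The long axis reaches radius $X$ only in the slow-decay directions $I_1\cup(I_1+\pi)$, where the angular width $\sim Y/|\mathbf{m}|$ of the swept region compensates for the weaker constraint $Z\Phi(\mathbf{m})\leq|\mathbf{m}|^{-3}$, while frequencies outside that cone occur only up to radius $Y$, where $Z\Phi(\mathbf{m})\leq|\mathbf{m}|^{-4}$ is affordable; equalizing the two constraints on $Z$ is exactly what produces the exponent $2/5$. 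Any attempt to repair your annulus by restricting it to the good cone runs into the convexity and origin-symmetry requirements of Lemma~\ref{C-M} and leads back to this rotated-rectangle construction.
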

 Again, Lemma~\ref{CN} is the starting point for the proof. Then, the proof of the lower bound in Theorem~\ref{t6} relies on an argument in \cite{MR4358540}, and we present it under a general form in Theorem~\ref{t4}. Finally, the proof of the upper bound is more involved than the one in Theorem~\ref{t1} and requires unions of special sets of points that are lattices under certain affine transformations.
 \par
  Finally, in Section~\ref{S4}, we study the intermediate case of $|I|=\psi_C$. Namely, we show that in such circumstances, the affine quadratic discrepancy can achieve any polynomial order in between $N^{1/2}$ and $N^{2/5}$. First, we construct suitable planar convex bodies and establish subtle geometric estimates for their Fourier transform. Then, the main result of the last section, Theorem~\ref{InterTeo}, follows from the aforementioned estimates and by adjusting the arguments in Section~\ref{S3}.

\section{Main results: Asymptotic behaviour of the Fourier transform}

    We introduce a few geometric tools before stating the results whose proofs are in Section~\ref{S2}. First, we give notions on chords and diameters of a body.

    \begin{figure}
        \centering
        \includegraphics[width=0.9\linewidth]{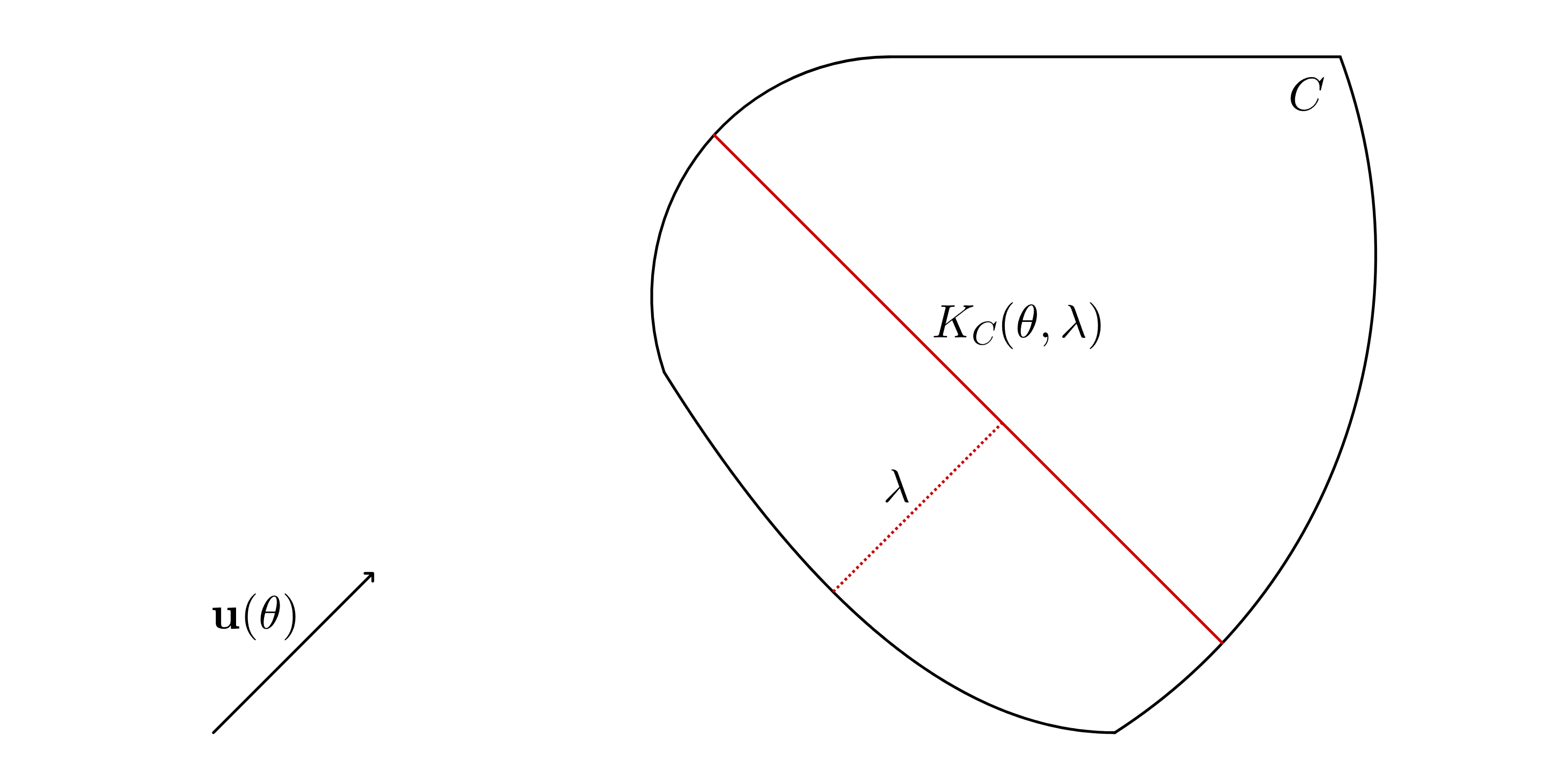}
        \caption{The chord in Definition~\ref{Corde1}.}
        \label{FF1}
    \end{figure}
    
    \begin{defn}\label{Corde1}
		Let $C\subset\mathbb{R}^2$ be a convex body. For an angle $\theta\in\mathbb{T}_{2\pi}$ and value $\lambda>0$, we define the \emph{chord} of $C$ in direction $\uthe$ at distance $\lambda$ as
		\begin{equation*}
		K_C(\theta,\lambda)=\left\{\mathbf{x}\in C\,\colon\,\mathbf{x}\cdot\uthe=\inf_{\mathbf{y}\in C}(\mathbf{y}\cdot\uthe)+\lambda\right\}.
		\end{equation*}
		Further, we consider its length $\left|K_C(\theta,\lambda)\right|$, and we define the quantity
		\begin{equation*}
		{\gamma}_{C}(\theta,\lambda)=\max\{\left|K_C(\theta,\lambda)\right|,\left|K_C(\theta+\pi,\lambda)\right|\}.
		\end{equation*}
		Last, we define the \emph{longest directional diameter} (or classic diameter) of $C$ as
		\begin{equation*}
		L_C=\max_{\mathbf{x},\mathbf{y}\in C}|\mathbf{x}-\mathbf{y}|,
		\end{equation*}
		and we define the \emph{shortest directional diameter} of $C$ as
		\begin{equation*}
		S_C=\min_{\theta\in\mathbb{T}_{2\pi}}\max_{\lambda\geq0}\left|K_C(\theta,\lambda)\right|.
		\end{equation*}
	\end{defn}

    Secondly, we introduce an object that relates directions and perimeter.

    \begin{defn}\label{D4}
        Let $C\subset\mathbb{R}^2$ be a convex body. For an interval of angles $I\subset\mathbb{T}_{2\pi}$, we define the \emph{portion of perimeter} of $C$ with respect to $I$ as

        \begin{equation*}
            P_C(I)=\left|\left\{s\in\mathbb{T}_{\left|\partial C\right|}\,\colon\,\nu_C(s)\cap I\neq\varnothing \right\}\right|.
        \end{equation*}
    \end{defn}

    The following lemma relates the Fourier transform of a planar convex body with its chords, and in particular, it is built upon the one-dimensional results in \cite{MR1166380} and \cite{MR4358540}.
    \begin{lem}\label{t3}
    	There exist positive absolute constants $\kappa_3$ and $\kappa_4$ such that, for every convex body $C\subset\mathbb{R}^2$, for every angle $\theta\in\mathbb{T}_{2\pi}$ and for every value $\rho\geq \kappa_3L_C^{6}/S_C^{7}$, it holds
    	\begin{equation*}
    	\kappa_4\rho^{-2}\gamma^2_{C}(\theta,\rho^{-1})\leq\int_{0}^{1}\left|\widehat{\mathds{1}}_{[\delta] C}(\rho\,\uthe)\right|^2\de \delta\leq2\rho^{-2}\gamma^2_{C}(\theta,\rho^{-1}).
    	\end{equation*}
    \end{lem}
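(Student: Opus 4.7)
The plan is to reduce the claim to a one-dimensional averaged Fourier estimate for the chord-length function $f_\theta(\lambda):=|K_C(\theta,\lambda)|$, and then invoke the one-dimensional techniques of \cite{MR1166380} and \cite{MR4358540}.

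First I would use \eqref{FourierProp} (with trivial rotation) to write $\widehat{\mathds{1}}_{[\delta]C}(\rho\uthe)=\delta^2\widehat{\mathds{1}}_C(\delta\rho\,\uthe)$, and then slice $C$ by hyperplanes orthogonal to $\uthe$: Fubini yields
\begin{equation*}
\widehat{\mathds{1}}_C(\tau\uthe)=e^{-2\pi i\tau\alpha_\theta}\,\widehat{f_\theta}(\tau),\qquad \alpha_\theta:=\min_{\mathbf{y}\in C}(\mathbf{y}\cdot\uthe),
\end{equation*}
where $\widehat{f_\theta}$ is the one-dimensional Fourier transform of $f_\theta$. The function $f_\theta$ is non-negative, supported on $[0,w_\theta]$ (with $w_\theta$ the width of $C$ in direction $\uthe$) and \emph{concave} by Brunn's theorem in the plane; moreover $f_\theta(w_\theta-\lambda)=|K_C(\theta+\pi,\lambda)|$, so that
\begin{equation*}
\gamma_C(\theta,\rho^{-1})=\max\{f_\theta(\rho^{-1}),\,f_\theta(w_\theta-\rho^{-1})\}.
\end{equation*}
The change of variable $u=\delta\rho$ then turns the quantity to estimate into $\rho^{-5}\int_0^\rho u^4|\widehat{f_\theta}(u)|^2\de u$, so the lemma is equivalent to the purely one-dimensional statement
\begin{equation*}
\kappa_4\,\rho^3\gamma_C^2(\theta,\rho^{-1})\;\leq\;\int_0^\rho u^4\,|\widehat{f_\theta}(u)|^2\,\de u\;\leq\;2\rho^3\gamma_C^2(\theta,\rho^{-1}).
\end{equation*}

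To prove this, I would integrate by parts. Writing $a_0:=f_\theta(0^+)$ and $a_w:=f_\theta(w_\theta^-)$ (which vanish unless $\partial C$ has a straight edge orthogonal to $\uthe$),
\begin{equation*}
\widehat{f_\theta}(u)=\frac{a_0-a_w\,e^{-2\pi iuw_\theta}}{2\pi iu}+\frac{1}{2\pi iu}\int_0^{w_\theta}f_\theta'(\lambda)\,e^{-2\pi iu\lambda}\de\lambda.
\end{equation*}
Since $f_\theta'$ is monotone decreasing (by concavity), a second integration by parts, or equivalently a van der Corput-type estimate, controls the remainder pointwise by a multiple of $u^{-1}\gamma_C(\theta,u^{-1})$. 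Integrating $u^4|\widehat{f_\theta}(u)|^2\de u$ over $[0,\rho]$ yields the upper bound at once, since the bulk of the mass sits at frequencies $u$ comparable to $\rho$. For the matching lower bound one localizes to frequencies $u$ comparable to $\rho$ and uses that, after averaging in $u$, the chord contribution $f_\theta(u^{-1})^2+f_\theta(w_\theta-u^{-1})^2$ cannot be cancelled by the oscillatory factor $e^{-2\pi iuw_\theta}$.

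The main obstacle is the lower bound: one has to rule out such cancellation and ensure that the integration-by-parts remainder does not overwhelm the boundary contribution. This is precisely where the hypothesis $\rho\geq\kappa_3L_C^6/S_C^7$ is used. Quantitatively, it forces $\rho^{-1}$ to be small compared with the geometric scales $L_C$ and $S_C$ of the body, which allows us to approximate $f_\theta$ on the endpoint strips $[0,\rho^{-1}]$ and $[w_\theta-\rho^{-1},w_\theta]$ by (essentially) its boundary values; the specific exponents $6$ and $7$ emerge from tracking, in the one-dimensional estimates of \cite{MR1166380,MR4358540}, how concavity bounds the growth of $f_\theta$ near the endpoints against $L_C$ and $S_C$ in the worst case of highly elongated bodies.
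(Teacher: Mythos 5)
Your reduction is sound and is essentially the paper's starting point: slicing $C$ orthogonally to $\uthe$ produces the non-negative concave chord function $f_\theta$ (the paper's $g$ in \eqref{e11}), $|\widehat{\mathds{1}}_C(\tau\uthe)|=|\widehat{f_\theta}(\tau)|$, and the change of variable $u=\delta\rho$ correctly turns the claim into the two-sided estimate $\kappa_4\rho^3\gamma_C^2(\theta,\rho^{-1})\leq\int_0^\rho u^4|\widehat{f_\theta}(u)|^2\de u\leq 2\rho^3\gamma_C^2(\theta,\rho^{-1})$. The genuine gap is the lower bound. Your argument for it is the single assertion that, after averaging in $u$ near $\rho$, ``the chord contribution cannot be cancelled by the oscillatory factor'': but that assertion \emph{is} the theorem here. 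It is exactly the unpublished Podkorytov estimate, proved by Brandolini and Travaglini in \cite[Lem.~23]{MR4358540} via moduli of smoothness, namely $\int_{\kappa_1}^{\kappa_2}|\widehat{f}(\delta s)|^2\de\delta\asymp s^{-2}\zeta_f^2(s^{-1})$ for concave $f$ on $[-1,1]$; it does not follow from integration by parts plus a non-cancellation claim, and you supply no mechanism (no positivity trick, no smoothness-modulus comparison, nothing that rules out $|\widehat{f_\theta}(u)|$ being anomalously small on the whole window $u\asymp\rho$). If your intention was to quote that one-dimensional lemma as a black box, say so explicitly and then do the remaining bookkeeping the paper does: the lemma averages $\delta$ over $[\kappa_1,\kappa_2]$ with $\kappa_2>1$, whereas your integral stops at $u=\rho$ (i.e.\ $\delta=1$), so one must rescale ($\rho=2s\kappa_2/(b-a)$, $\delta=\kappa_2\Delta$), absorb the weight $\delta^2$, and use the concavity comparison $f(-1+s_1)\leq 2f(-1+s_2)$ to replace $\zeta_f(\kappa_2^{-1}s^{-1})$ by $\zeta_f(s^{-1})$.

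Two secondary points. First, your pointwise control of the remainder by ``a second integration by parts'' fails as stated, because $f_\theta'$ is generally unbounded at the endpoints (for a disc $f_\theta(\lambda)\sim c\sqrt{\lambda}$), so $f_\theta'(0^+)=+\infty$; the correct pointwise bound $|\widehat{f_\theta}(u)|\leq u^{-1}\zeta_{f_\theta}(u^{-1})$ is Podkorytov's lemma, whose proof splits off endpoint strips of width $u^{-1}$ and uses concavity there, and its clean constant is what allows the paper to reach the stated factor $2$ in the upper bound (together with $\gamma_C(\theta,\delta^{-1}\rho^{-1})\leq\delta^{-1}\gamma_C(\theta,\rho^{-1})$); a generic van der Corput constant would not give $2$. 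Second, your account of the threshold is misplaced: the exponents in $\rho\geq\kappa_3L_C^6/S_C^7$ come from the \emph{upper} bound, not the lower one. In your own formulation, the frequencies $u\lesssim 1/S_C$ admit only the trivial bound $|\widehat{f_\theta}(u)|\leq|C|\leq L_C^2$, contributing about $L_C^4/S_C^5$ to $\int_0^\rho u^4|\widehat{f_\theta}|^2\de u$, and this must be dominated by $\rho^3\gamma_C^2(\theta,\rho^{-1})\geq\rho\,S_C^2/L_C^2$ (Proposition~\ref{r0}), which forces $\rho\succcurlyeq L_C^6/S_C^7$; the lower bound only needs $\rho\succcurlyeq 1/S_C$.
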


    \begin{rem}
        Notice that if $C$ has a $\mathcal{C}^2$ boundary with curvature that is uniformly bounded away from zero and infinity, then the order in the previous inequality would be of $\rho^{-3}$ uniformly in $\theta$. On the other hand, if $C$ is an axis-symmetric square, then one would find an order of $\rho^{-4}$ at direction $\theta=\pi/4$ and an order of $\rho^{-2}$ at direction $\theta=0$. Last, with some work, one may construct $C$ in such a way that chords in the same direction $\uthe$ display different polynomial decays at different magnitudes of $\rho$; therefore, one would get that the asymptotic behaviour of $\widehat{\mathds{1}}_{C}(\rho\,\uthe)$ oscillates between different polynomial orders at different magnitudes.
    \end{rem}

    When further considering averages over rotations, we find a neat relation between the decay of the Fourier transform of $\mathds{1}_C$ and parts of $\partial C$. Indeed, the starting point for our results on discrepancy is the following.
    
    \begin{lem}\label{CN}
    Uniformly for every convex body $C\subset \mathbb{R}^2$, and uniformly for every closed interval $I\subset\mathbb{T}_{2\pi}$, it holds
	\begin{equation*}
	\int_{I}\int_{0}^{1}\left|\widehat{\mathds{1}}_{[\delta] C}(\rho\,\uthe)\right|^2\de \delta\de \theta\asymp\rho^{-3}\left(P_C\left(I\right)+P_C\left(I+\pi\right)\right),
 \end{equation*}
 with the convention that if $P_C\left(I\right)+P_C\left(I+\pi\right)=0$, then it holds
 \begin{equation*}
     \lim_{\rho\to+\infty}\rho^{3}\int_{I}\int_{0}^{1}\left|\widehat{\mathds{1}}_{[\delta] C}(\rho\,\uthe)\right|^2\de \delta\de \theta=0.
 \end{equation*}
\end{lem}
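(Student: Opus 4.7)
My plan is to apply Lemma~\ref{t3} to reduce the Fourier-theoretic claim to a purely geometric identity between an integral of squared chord lengths and the portion-of-perimeter function $P_C$, and then to verify that identity by a case analysis of the boundary of $C$. For $\rho$ satisfying the hypothesis of Lemma~\ref{t3}, one has $\int_0^1 |\widehat{\mathds{1}}_{[\delta]C}(\rho\uthe)|^2\,\de\delta \asymp \rho^{-2}\gamma_C^2(\theta,\rho^{-1})$ uniformly in $\theta$. Using $\gamma_C^2(\theta,\lambda) = \max\{|K_C(\theta,\lambda)|^2,|K_C(\theta+\pi,\lambda)|^2\} \asymp |K_C(\theta,\lambda)|^2 + |K_C(\theta+\pi,\lambda)|^2$ together with the substitution $\theta \mapsto \theta-\pi$ in the second summand, the lemma reduces to showing that, for every closed interval $J \subset \mathbb{T}_{2\pi}$,
\begin{equation*}
\int_J |K_C(\theta,\lambda)|^2\,\de\theta \asymp \lambda\,P_C(J)
\end{equation*}
as $\lambda \to 0^+$, with the left side being $o(\lambda)$ when $P_C(J)=0$.

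To prove this geometric claim, I would split the $\theta$-integral according to three qualitatively distinct local pictures near the minimizer $\boldsymbol{\Gamma}_C(s_0(\theta))$. If $s_0(\theta)$ is a smooth boundary point of positive curvature $\kappa(s_0)$, then $|K_C(\theta,\lambda)| \asymp \sqrt{\lambda/\kappa(s_0)}$, and under the change of variables $\de s = \de\theta/\kappa$ the contribution from this region integrates to $\asymp \lambda\cdot(\text{arc length of the smooth part of }\partial C\text{ with normal in }J)$. If $s_0(\theta)$ lies on a flat edge of $\partial C$ of length $L$ with normal $\theta_{\text{edge}} \in J$, a direct computation near $\theta=\theta_{\text{edge}}$, splitting the integral at the threshold $|\theta-\theta_{\text{edge}}|=\lambda/L$, yields a contribution $\asymp \lambda L$. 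If $\theta$ lies strictly inside the normal set of an angular point, then the chord scales linearly, $|K_C(\theta,\lambda)| \asymp \lambda$, producing only an $O(\lambda^2)$ contribution. The first two regimes together recover $\asymp \lambda\,P_C(J)$, while the third is a lower-order correction; when $P_C(J)=0$ only the third regime contributes, giving the stated $o(\lambda)$ bound since $J \subseteq \mathcal{T}_C$.

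The main obstacle is to control the transitions between these regimes uniformly in $C$ and $J$. Near an edge normal the elementary formula $|K_C(\theta,\lambda)| \approx \lambda\cot(|\theta-\theta_{\text{edge}}|)$ is valid only above the threshold $\lambda/L$; below it the chord extends across the edge vertex onto a neighboring edge, forcing a careful splitting of the $\theta$-integral so that the two subregions each contribute $\asymp \lambda L$. Likewise at an angular point, the closed-form $|K_C|=\lambda(\cot\beta_1+\cot\beta_2)$ holds only while the chord remains supported on the two corner edges. The hypothesis $\rho\geq\kappa_3 L_C^{6}/S_C^{7}$ of Lemma~\ref{t3} is precisely what guarantees $\lambda=\rho^{-1}$ small enough (relative to the geometric scales $L_C$ and $S_C$) that the various thresholds are genuinely separated and the constants in $\asymp$ depend only on absolute quantities, as demanded by the uniformity in $C$ and $I$.
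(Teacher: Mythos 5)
Your opening reduction---invoking Lemma~\ref{t3} to replace the dilation-averaged Fourier transform by $\rho^{-2}\gamma_C^2(\theta,\rho^{-1})$, and then reducing the lemma to the chord estimate $\int_J|K_C(\theta,\lambda)|^2\,\de\theta\asymp\lambda\,P_C(J)$ for $J=I$ and $J=I+\pi$---is exactly the paper's route (this intermediate statement is Corollary~\ref{c2}). The gap lies in how you propose to prove the chord estimate. Your trichotomy (smooth point of positive curvature / flat edge / interior of the normal set of an angular point) is not exhaustive for a general convex body, yet the statement must hold uniformly over all of them. The normal (surface-area) measure of a planar convex body can have, besides an absolutely continuous part and atoms (edges), a nontrivial singular continuous part, and there are boundary arcs where the curvature is $0$ or $+\infty$ without the point lying on an edge or being angular---the paper's own Section~7 is built on such arcs, where near a boundary piece $y=x^\alpha$ the chord scales like $\lambda^{1/\alpha}$, which is neither of your $\sqrt{\lambda}$ and $\lambda$ regimes. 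More seriously, your method identifies a pointwise-in-$\theta$ asymptotic for $|K_C(\theta,\lambda)|^2/\lambda$ and then integrates; for a body whose normal measure on $J$ is purely singular continuous, that pointwise limit vanishes for a.e.\ $\theta\in J$ while $P_C(J)>0$, so any ``classify the local regime at a.e.\ $\theta$ and sum'' argument loses mass and cannot produce the lower bound $\succcurlyeq\lambda P_C(J)$. A milder form of the same Fatou-type loss already threatens configurations with infinitely many edges or corners accumulating at a direction of $J$, where your threshold $|\theta-\theta_{\mathrm{edge}}|=\lambda/L$ arguments must be summed with uniform control. Your final claim that the single threshold $\rho\geq\kappa_3L_C^6/S_C^7$ separates all these scales is also not tenable: that threshold controls only the two diameters, not the local regularity of $\partial C$.

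The paper sidesteps every one of these issues with an exact global identity (Proposition~\ref{t5}): differentiating $\int_I|K_C^+(\theta,\lambda)|^2\,\de\theta$ in $\lambda$ and combining $\bigl(\tfrac{\partial}{\partial\lambda}s^+\bigr)\,\uthe\cdot\boldsymbol{\Gamma}'(s^+)=1$ with $|K_C^+|=\bigl(\tfrac{\partial}{\partial\theta}s^+\bigr)\,\uthe\cdot\boldsymbol{\Gamma}'(s^+)$ collapses the integrand to $-\bigl(\tfrac{\partial}{\partial\theta}s^+\bigr)\,\duthe\cdot\boldsymbol{\Gamma}'(s^+)$, which up to controlled errors is $\tfrac{\partial}{\partial\theta}s^+$; its integral telescopes to an arc length converging to $P_C(I)$, and L'Hospital's rule concludes. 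No decomposition of $\partial C$ by regularity type is needed, and the singular part of the curvature measure is captured automatically. As written, your plan does not close for general convex bodies; to salvage a local approach you would have to treat the full generality of the normal measure, which is essentially a different and harder argument.
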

The latter result is complementary to the estimates of Beck \cite{MR0906524} and Montgomery \cite[Ch.~6]{MR1297543} in the case of complete rotations, and indeed, they both did find a dependence on the perimeter $|\partial C|$. More generally, the problem of estimating the Fourier transform of a geometric body (in arbitrary dimension) has a long history, and as examples, we refer the reader to \cite{Hla50, Herz62, Rand69-1, Ran69-2, BNW88, CDMM90}. In particular, our approach does not involve the Gaussian curvature, as it does not make use of the method of stationary phase for oscillatory integrals.\par
Once taken into account Lemma~\ref{t3}, the proof of Lemma~\ref{CN} relies on Proposition~\ref{t5}, which finds an exact relation between averages over semi-chords of a planar convex body and portions of its perimeter. It is a pivotal point of this paper, but in order to state such a result, we need to expand on our previous notion of chord.

    \begin{figure}
        \centering
        \includegraphics[width=0.9\linewidth]{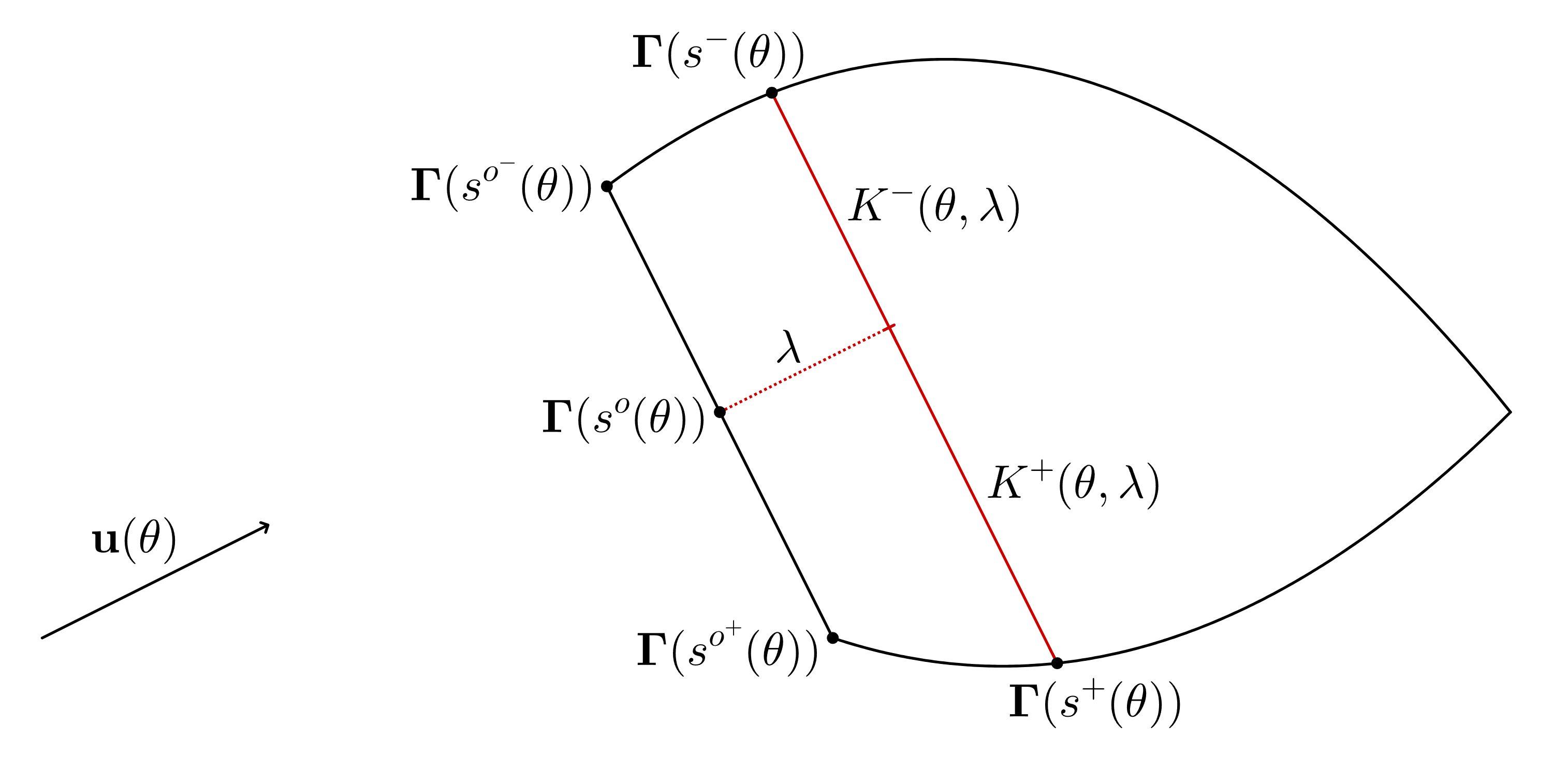}
        \caption{The objects in Definition~\ref{Corde2}. For simplicity, we omit to write $C$.}
        \label{FF2}
    \end{figure}
     
    \begin{defn}\label{Corde2}
        Let $K_C(\theta,\lambda)$ be as in Definition~\ref{Corde1}. We set 
        \begin{equation*}
            s_C^-(\theta,\lambda)\quad\text{and}\quad s_C^+(\theta,\lambda)
        \end{equation*}
        to be the parameterization by $\boldsymbol{\Gamma}_C$ of the extreme points of $K_C(\theta,\lambda)$, with the convention that 
        \begin{equation*}
            \boldsymbol{\Gamma}_C(s_C^-(\theta,\lambda))-\boldsymbol{\Gamma}_C(s_C^+(\theta,\lambda))=\left|K_C(\theta,\lambda)\right|\duthe.
        \end{equation*}
    Further, we define
    \begin{equation*}
        s_C^{o^-}(\theta)=\lim_{\lambda\to0}s_C^-(\theta,\lambda)\quad\text{and}\quad s_C^{o^+}(\theta)=\lim_{\lambda\to0}s_C^+(\theta,\lambda),
    \end{equation*}
    and we set
    \begin{equation*}
    s_C^o(\theta)=s_C^{o^-}(\theta)+\frac{\eta_{|\partial C|}\!\left(s_C^{o^-}(\theta),s_C^{o^+}(\theta)\right)}{2}.
    \end{equation*}
    Hence, we define the \emph{right semi-chord} $K_C^+(\theta,\lambda)$ to be the projection of
    \begin{equation*}
    \boldsymbol{\Gamma}_C\left(\left[s_C^o(\theta),s_C^+(\theta,\lambda)\right]\right) \quad\text{in direction}\quad \uthe \quad\text{on} \quad K_C(\theta,\lambda), 
    \end{equation*}
    and we define $K_C^-(\theta,\lambda)$ analogously.
    \end{defn}
    
    Having gathered all the previous definitions, we are able to state our technical result; in particular, we state it in the case of right semi-chords.
    
    \begin{prop}\label{t5}
	Let $C\subset \mathbb{R}^2$ be a convex body, and let $I=(\alpha,\beta]\subset\mathbb{T}_{2\pi}$ be a left semi-open interval. It holds
	\begin{equation}\label{TeoRota}
	\lim_{\lambda\to0}\frac{1}{2\lambda}\int_I\left|K_C^+(\theta,\lambda)\right|^2\de \theta = P_C(I).
	\end{equation}
\end{prop}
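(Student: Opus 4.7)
The plan is a change-of-variables argument that identifies the $\theta$-integral with an arc-length integral on $\partial C$. I would parameterize the boundary by $\boldsymbol{\Gamma}_C$ and use that, writing the unit tangent as $(\cos\psi,\sin\psi)$, one has $\nu_C = \psi+\pi/2$ at a smooth support point. The chord condition $\boldsymbol{\Gamma}_C(s_C^+)\cdot\uthe - \boldsymbol{\Gamma}_C(s_C^o)\cdot\uthe = \lambda$ gives
\[
\lambda = \int_{s_C^o(\theta)}^{s_C^+(\theta,\lambda)} \sin(\nu_C(\sigma) - \theta)\,d\sigma,
\]
while the dual identity in direction $\duthe$, together with monotonicity of the arc from $s_C^o$ to $s_C^+$ (which holds for small $\lambda$ at smooth support), gives
\[
|K_C^+(\theta,\lambda)| = \int_{s_C^o(\theta)}^{s_C^+(\theta,\lambda)} \cos(\nu_C(\sigma) - \theta)\,d\sigma.
\]
Differentiating the $\lambda$-identity in $\theta$, and using $\sin(\nu_C(s_C^o)-\theta)=0$ at smooth support (respectively $ds_C^o/d\theta = 0$ at corners), yields the Jacobian
\[
\frac{ds_C^+}{d\theta} \;=\; \frac{|K_C^+(\theta,\lambda)|}{\sin(\nu_C(s_C^+) - \theta)}.
\]

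Changing variables via this Jacobian rewrites the integral as
\[
\int_I |K_C^+|^2\,d\theta \;=\; \int_{B_\lambda} |K_C^+(\theta(s),\lambda)|\, \sin(\nu_C(s) - \theta(s))\,ds, \qquad B_\lambda := \{s_C^+(\theta,\lambda) : \theta \in I\}.
\]
At a smooth support point of positive curvature, Taylor-expanding the two arc-length integrals above in the small parameter $s_C^+-s_C^o$ gives $|K_C^+| = \sqrt{2\lambda/\kappa}\,(1+o(1))$ and $\sin(\nu_C(s_C^+) - \theta) = \sqrt{2\lambda\kappa}\,(1+o(1))$, so their product is $2\lambda + o(\lambda)$ uniformly on compact subsets of smooth support. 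Combined with the convergence of $B_\lambda$ to $\{s \in \mathbb{T}_{|\partial C|} : \nu_C(s) \cap I \neq \varnothing\}$, an arc of length $P_C(I)$, dividing by $2\lambda$ and letting $\lambda \to 0$ recovers the claim for the smooth portions of $\partial C$.

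The main obstacle is rigorous handling of the non-smooth pieces of $\partial C$. At a corner, a single boundary point carries a positive-measure normal set; as $\theta$ sweeps this range, $s_C^+$ traces continuously across an adjacent boundary arc, and if that arc is a flat side the sweep contributes its full length to the limit. At a flat side with normal $\theta_0$, $|K_C^+|$ does not vanish as $\lambda\to 0$, but $\{\theta_0\}$ has $\theta$-measure zero so the direct contribution is negligible---the flat side's arc length enters via the neighbouring corner sweeps. A further subtlety is that, in the corner regime, the arc from $s_C^o$ to $s_C^+$ may cease to be monotone in direction $\duthe$ and the projection defining $|K_C^+|$ can ``fold back", invalidating the identity $|K_C^+| = \int\cos(\nu_C-\theta)\,d\sigma$ and forcing one to work directly with the maximum extent of the projected set. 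Carrying out a case analysis along a decomposition of $\partial C$ into smooth arcs, corners, and flat segments, and showing that the corner sweeps correctly absorb the flat-side contributions, is the core of the proof.
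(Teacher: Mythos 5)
Your two starting identities and the Jacobian $\partial_\theta s_C^+=\left|K_C^+\right|/\sin\!\left(\nu_C(s_C^+)-\theta\right)$ are exactly the paper's relations \eqref{l1}--\eqref{l4} in disguise, but the way you pass to the limit has a genuine gap. The asymptotics $\left|K_C^+\right|=\sqrt{2\lambda/\kappa}\,(1+o(1))$ and $\sin\!\left(\nu_C(s^+)-\theta\right)=\sqrt{2\lambda\kappa}\,(1+o(1))$ presuppose a second-order expansion of $\partial C$ with strictly positive curvature at the support point, and your global scheme presupposes that $\partial C$ decomposes into positive-curvature smooth arcs, corners and flat segments. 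A general convex body admits no such decomposition: the normal angle $\nu_C$ is merely a non-decreasing function of arc length, and it can be continuous, strictly increasing and singular, in which case $C$ is strictly convex with $\mathcal{C}^1$ boundary (no corners, no segments) while the curvature vanishes at almost every boundary point; your three cases then cover a null subset of $\partial C$ and the claimed estimate $\left|K_C^+\right|\sin\!\left(\nu_C(s)-\theta(s)\right)=2\lambda+o(\lambda)$ is not available there. The constant $2$ really is an artefact of the parabolic model: for a boundary arc $y=|x|^{\alpha}$ with $\alpha>1$ and $\theta$ equal to the normal at the flat point one gets $\left|K_C^+\right|\approx\lambda^{1/\alpha}$ and $\sin\!\left(\nu_C(s^+)-\theta\right)\approx\alpha\lambda^{(\alpha-1)/\alpha}$, so the product is $\approx\alpha\lambda$, not $2\lambda$. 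The uniformity ``on compact subsets of smooth support'' is likewise unjustified (Alexandrov's theorem gives pointwise second-order expansions only almost everywhere, with no uniformity, and the positive-curvature set need not contain any arc), and the corner/flat-side bookkeeping that you yourself identify as the core of the proof is only described, not carried out.

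The paper's proof shows how to avoid all of this: instead of changing variables, it differentiates $\int_I\left|K_C^+(\theta,\lambda)\right|^2\de\theta$ in $\lambda$, uses the same two identities to reduce the derivative to $-2\int_I\left(\partial_\theta s_C^+\right)\duthe\cdot\boldsymbol{\Gamma}'(s_C^+)\de\theta$, replaces $\boldsymbol{\Gamma}'(s_C^+(\theta,\lambda))$ by $-\mathbf{u}'\!\left(\nu_C^+(s_C^{o^+}(\theta))\right)$ uniformly in $\theta$ (a statement about tangent directions near the support point valid for every convex body, with no curvature input), controls the exceptional set of angles within $\varepsilon$ of that normal by the elementary bound $\lambda/\tan\varepsilon$ per component, and concludes from $\int_I\partial_\theta s_C^+\de\theta\to P_C(I)$ and L'Hospital's rule. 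If you wish to keep your change-of-variables formulation, you would still need an integrated argument of this type for $\frac{1}{2\lambda}\int_{B_\lambda}\left|K_C^+\right|\sin\!\left(\nu_C-\theta\right)\de s$, i.e.\ essentially the paper's computation; the pointwise Taylor expansion cannot serve as the engine of the proof for arbitrary convex bodies.
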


\begin{rem}\label{r1}
    Suppose there exists $s\in\mathbb{T}_{|\partial C|}$ such that
 \begin{equation*}
     \nu_C(s)=[\alpha_1,\beta_1]\subset\mathbb{T}_{2\pi}\quad\text{with}\quad\alpha_1\neq\beta_1.
 \end{equation*}
 By some basic geometry, we get that for every $\theta\in(\alpha_1,\beta_1)$ it holds
	\begin{equation*}
		\lim_{\lambda\to0}\frac{1}{\lambda}\left| K_{C}(\theta,\lambda)\right|=\cot \left(\eta_{2\pi}(\alpha_1,\theta)\right)+\cot \left(\eta_{2\pi}(\theta,\beta_1)\right).
	\end{equation*}
	Hence, if we consider an interval $[\alpha_2,\beta_2]\subset(\alpha_1,\beta_1)$ and set $\rho=1/\lambda$, we get
	\begin{equation*}
 \int_{\alpha_2}^{\beta_2}\left| K_{C}(\theta,\rho^{-1})\right|^2\de \theta\asymp\rho^{-2}.
	\end{equation*}
\end{rem}
Last, if we pair Proposition~\ref{t5} with Lemma~\ref{Pod1}, we get the following spherical estimates on the Fourier transform.
\begin{lem}\label{LC}
    Uniformly for every convex body $C\subset \mathbb{R}^2$, and uniformly for every closed interval $I\subset\mathbb{T}_{2\pi}$, it holds
	\begin{equation*}
	\int_{I}\left|\widehat{\mathds{1}}_{C}(\rho\,\uthe)\right|^2\de \theta\preccurlyeq\rho^{-3}\left(P_C\left(I\right)+P_C\left(I+\pi\right)\right),
 \end{equation*}
 with the convention that if $P_C\left(I\right)+P_C\left(I+\pi\right)=0$, then it holds
 \begin{equation*}
     \lim_{\rho\to+\infty}\rho^{3}\int_{I}\left|\widehat{\mathds{1}}_{C}(\rho\,\uthe)\right|^2\de \theta=0.
 \end{equation*}
\end{lem}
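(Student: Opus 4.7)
The plan is to combine the pointwise Fourier-transform estimate of Lemma~\ref{Pod1} with the perimeter identity of Proposition~\ref{t5}, as the sentence preceding the statement already indicates. First, I would invoke Lemma~\ref{Pod1} to obtain a pointwise upper bound of the form
\begin{equation*}
    |\widehat{\mathds{1}}_C(\rho \uthe)|^2 \preccurlyeq \rho^{-2}\gamma_C^2(\theta, \rho^{-1})
\end{equation*}
uniformly in $C$ and in $\theta$. This is the pointwise-in-$\delta$ counterpart of the averaged two-sided bound of Lemma~\ref{t3}; only its upper half is needed here. Since $|K_C(\theta, \lambda)| = |K_C^+(\theta, \lambda)| + |K_C^-(\theta, \lambda)|$ and $\gamma_C(\theta, \lambda) = \max\{|K_C(\theta, \lambda)|, |K_C(\theta+\pi, \lambda)|\}$, the elementary bound $(a+b)^2 \leq 2(a^2+b^2)$ gives
\begin{equation*}
    \gamma_C^2(\theta, \rho^{-1}) \leq 2\sum_{\epsilon\in\{+,-\}}\bigl(|K_C^\epsilon(\theta, \rho^{-1})|^2 + |K_C^\epsilon(\theta+\pi, \rho^{-1})|^2\bigr).
\end{equation*}

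Next, I would integrate over $\theta\in I$ and apply Proposition~\ref{t5} with $\lambda = \rho^{-1}$. Each integral $\int_I |K_C^+(\theta, \rho^{-1})|^2 \de\theta$ is, by that proposition, asymptotic to $2\rho^{-1} P_C(I)$; the analogous asymptotic for the left semi-chord $K_C^-$ follows by reversing the parameterization of $\boldsymbol{\Gamma}_C$ about $s_C^o(\theta)$, which is a symmetry of the setup. After the change of variable $\theta\mapsto\theta+\pi$ the remaining two summands produce $P_C(I+\pi)$ in the same way. Multiplying by the prefactor $\rho^{-2}$ coming from Lemma~\ref{Pod1}, I conclude
\begin{equation*}
    \int_I |\widehat{\mathds{1}}_C(\rho \uthe)|^2 \de\theta \preccurlyeq \rho^{-3}\bigl(P_C(I) + P_C(I+\pi)\bigr),
\end{equation*}
uniformly in $C$ and in $I$, which is the stated inequality.

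For the refinement when $P_C(I) + P_C(I+\pi) = 0$, the limit statement of Proposition~\ref{t5} gives $\int_I |K_C^\epsilon(\theta, \rho^{-1})|^2 \de\theta = o(\rho^{-1})$ and similarly on $I+\pi$, so the same chain of inequalities yields $\rho^3\int_I|\widehat{\mathds{1}}_C(\rho\uthe)|^2 \de\theta = o(1)$, as required. The only step I expect to require real care is the invocation of Lemma~\ref{Pod1}: I must check that it actually supplies a \emph{pointwise} bound (not averaged over dilations) with implicit constant independent of both $C$ and $\theta$, as otherwise the uniformity in the final statement is lost. A secondary and more routine point is justifying the left semi-chord analogue of Proposition~\ref{t5} with the correct constant $P_C(I)$; this should follow from the symmetric role of $s_C^{o^-}$ and $s_C^{o^+}$ about $s_C^o$ but must be made explicit.
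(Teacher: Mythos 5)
Your proposal is correct and follows essentially the same route as the paper: the pointwise bound of Lemma~\ref{Pod1} combined with Proposition~\ref{t5} applied to the semi-chords, which is exactly the content of Corollary~\ref{c2} (your inline use of $\max\{a,b\}^2\leq a^2+b^2$ and $(a+b)^2\leq 2(a^2+b^2)$ is how the paper derives that corollary). The two points you flag do hold as you expect: Lemma~\ref{Pod1} is pointwise with absolute constant $1$ (the threshold $\rho\geq 2/S_C$ is harmless for the asymptotic statement), and the paper notes the $K_C^-$ analogue of Proposition~\ref{t5} holds by an analogous proof.
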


In particular, considering complete rotations in the latter result, we recover an asymptotic version of the spherical estimates in \cite{MR1166380}. Also, we point out that in arbitrary dimensions and for more general sets, the authors in \cite{MR2006553} obtain an analogous decay; it seems plausible that the best constant should (as well) be the perimeter of the set considered, but this is an open matter.

\section{History of the problem}
To better comprehend the frame of this work, we start with a basic definition. As in the Euclidean case, we denote the characteristic function of an interval $I\subset\mathbb{T}$ by $\mathds{1}_I$. In one dimension, a sequence $\left\{p_j\right\}_{j=1}^\infty\subset\mathbb{T}$ is said to be uniformly distributed if for every interval $I\subseteq\mathbb{T}$, it holds 
	\begin{equation*}
 \lim_{N\to+\infty}N^{-1}\sum_{j=1}^{N}\mathds{1}_I(p_j)=|I|.
	\end{equation*}
 The concept of discrepancy has been introduced as a quantitative counterpart to the notion of uniform distribution. Namely, for a positive integer $N$, the discrepancy of a sequence $\mathcal{P}=\{p_j\}_{j=1}^\infty\subset\mathbb{T}$ is defined as
	\begin{equation*}
	D(\mathcal{P},\,N)=\sup_{0<x<1}\left|\sum_{j=1}^{N}\mathds{1}_{[0,x]}(p_j)-Nx\right|.
	\end{equation*}
 In 1935, van der Corput \cite{zbMATH03019347} conjectured that for any sequence $\mathcal{P}\subset\mathbb{T}$, the quantity $D(\mathcal{P},\, N)$ stays unbounded with respect to $N$. Ten years later, the conjecture was proved true by van Aardenne-Ehrenfest \cite{MR0015143, MR32717} with a first lower bound. In 1954, Roth \cite{MR66435} significantly improved the previously established lower bound as a consequence of a result he achieved in the two-dimensional setting. As in the Euclidean case, for a set $\Omega\subset\mathbb{T}^2$ and for a set of $N$ points $\mathcal{P}_N\subset\mathbb{T}^2$, the discrepancy of $\mathcal{P}_N$ with respect to $\Omega$ usually refers to the quantity
\begin{equation*}
    \mathcal{D}(\mathcal{P}_N,\,\Omega)=\sum_{{\mathbf{p}}\in\mathcal{P}_N}\mathds{1}_{\Omega}({\mathbf{p}})-N|\Omega|,
\end{equation*}
where, again, $\mathds{1}_{\Omega}$ stands for the characteristic function of $\Omega$. We state the celebrated result of Roth \cite{MR66435}, in which discrepancy over a family of rectangles is considered.
	\begin{teocit}[Roth]
		It holds
		\begin{equation*}
		\inf_{\# \mathcal{P}=N}\int_{0}^{1}\int_{0}^{1}\left|\mathcal{D}\left(\mathcal{P},\,[0,x)\times[0,y)\right)\right|^2\de x\de y\succcurlyeq\log N.
		\end{equation*}
	\end{teocit}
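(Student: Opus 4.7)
The plan is to execute Roth's orthogonal-function method, testing the discrepancy against a combination of two-dimensional product Haar functions. Write $D_\mathcal{P}(x,y)=\mathcal{D}(\mathcal{P},[0,x)\times[0,y))$ and let $n$ be the smallest nonnegative integer with $2^n\geq 4N$, so that $n\asymp\log N$. For each pair $\mathbf{k}=(k_1,k_2)\in\mathbb{Z}_{\geq 0}^2$ with $k_1+k_2=n$, I would partition $[0,1)^2$ into the $2^n$ axis-aligned dyadic rectangles
$$R_{\mathbf{k},\mathbf{j}}=[j_1 2^{-k_1},(j_1+1)2^{-k_1})\times[j_2 2^{-k_2},(j_2+1)2^{-k_2}),$$
and attach to each the product Haar function $h_{\mathbf{k},\mathbf{j}}(x,y)=h_{\mathbf{k},\mathbf{j}}^{(1)}(x)\,h_{\mathbf{k},\mathbf{j}}^{(2)}(y)$, whose two one-dimensional factors take the value $+1$ on the left half of their support and $-1$ on the right half. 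Since $N\leq 2^n/4$, at each level $\mathbf{k}$ at least $\tfrac{3}{4}\cdot 2^n$ of the rectangles $R_{\mathbf{k},\mathbf{j}}$ contain no point of $\mathcal{P}$.

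The heart of the argument is a cancellation identity on the empty rectangles. Expanding
$$D_\mathcal{P}(x,y)=\sum_{\mathbf{p}\in\mathcal{P}}\mathds{1}_{\{p_1<x,\,p_2<y\}}(x,y)-Nxy,$$
one checks, by inspecting the nine regions of $[0,1)^2$ cut out by the four sides of $R_{\mathbf{k},\mathbf{j}}$, that every indicator associated to a point $\mathbf{p}\notin R_{\mathbf{k},\mathbf{j}}$ restricts on $R_{\mathbf{k},\mathbf{j}}$ to a constant, to a function of $x$ alone, or to a function of $y$ alone; since $\int h_{\mathbf{k},\mathbf{j}}^{(1)}=\int h_{\mathbf{k},\mathbf{j}}^{(2)}=0$, each such indicator pairs to zero against $h_{\mathbf{k},\mathbf{j}}$. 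Consequently, whenever $R_{\mathbf{k},\mathbf{j}}\cap\mathcal{P}=\varnothing$ only the smooth term survives, and an elementary computation yields $\langle D_\mathcal{P},h_{\mathbf{k},\mathbf{j}}\rangle=-N\cdot 2^{-2n}/16$. I would then form the test function
$$F=\sum_{\substack{\mathbf{k}\in\mathbb{Z}_{\geq 0}^2\\ k_1+k_2=n}}\ \sum_{\mathbf{j}:\,R_{\mathbf{k},\mathbf{j}}\cap\mathcal{P}=\varnothing}h_{\mathbf{k},\mathbf{j}},$$
so that, using $N2^{-n}>1/8$ together with the lower bound on the number of empty rectangles at each of the $n+1$ levels, one obtains $|\langle D_\mathcal{P},F\rangle|\geq c_1(n+1)\asymp\log N$ for an absolute constant $c_1>0$.

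It remains to control $\|F\|_2$ by $L^2$-orthogonality. Two distinct rectangles at the same total level $n$ must differ in at least one coordinate level: if $k_1\neq k_1'$, the dyadic nesting property forces the supports of $h_{\mathbf{k},\mathbf{j}}^{(1)}$ and $h_{\mathbf{k}',\mathbf{j}'}^{(1)}$ either to be disjoint or the coarser factor to be constant on the finer factor's support, so $\int h_{\mathbf{k},\mathbf{j}}^{(1)}h_{\mathbf{k}',\mathbf{j}'}^{(1)}\de x=0$; the same argument handles $k_2\neq k_2'$, and identical levels $\mathbf{k}=\mathbf{k}'$ with $\mathbf{j}\neq\mathbf{j}'$ give disjoint supports outright. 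Hence $\|F\|_2^2=\sum_{\mathbf{k},\mathbf{j}}\|h_{\mathbf{k},\mathbf{j}}\|_2^2\leq(n+1)\cdot 2^n\cdot 2^{-n}=n+1$, and Cauchy--Schwarz delivers
$$\int_0^1\!\int_0^1|D_\mathcal{P}(x,y)|^2\de x\de y\geq\frac{|\langle D_\mathcal{P},F\rangle|^2}{\|F\|_2^2}\geq\frac{c_1^2(n+1)^2}{n+1}\asymp\log N,$$
uniformly in the choice of $\mathcal{P}$. The main technical obstacle is the cancellation step: one has to verify carefully that every "exterior" indicator degenerates into a product-separable form on the box, which is precisely where the tensor Haar structure, rather than a single-scale Haar system, is indispensable.
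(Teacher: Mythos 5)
Your argument is correct and complete: it is precisely the classical orthogonal-function proof via the tensor Haar basis that the paper attributes to Roth (the paper only cites this theorem and does not reproduce a proof). The key steps — the cancellation of exterior point indicators against mean-zero product Haar functions on empty dyadic boxes, the count of at least $\tfrac34\cdot 2^n$ empty boxes per level, the pairwise orthogonality across levels, and the final Cauchy--Schwarz duality — are all carried out correctly.
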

 The latter is a turning point in discrepancy theory, and the author himself considered it his best work (see \cite{MR3731299} for more historical details). The proof employs the classic orthogonal Haar basis, introducing a new geometric point of view into the field. We refer to \cite{MR2817765} for an extensive survey on the impact of Roth's result. 
 In 1956, H. Davenport \cite{MR0082531} showed that Roth's lower bound could not be improved, therefore proving its sharpness. Later, in 1994, Montgomery \cite[Ch.~6]{MR1297543} introduced an original approach employing Fourier series and obtained the following result.
    \begin{teocit}[Montgomery]
    	It holds
    	\begin{equation*}
    	\inf_{\# \mathcal{P}=N}\int_{0}^{1}\int_{\mathbb{T}^2}\left|\mathcal{D}\left(\mathcal{P},\,\boldsymbol{\tau}+[0,\delta)^2\right)\right|^2\de \boldsymbol{\tau}\de \delta\succcurlyeq\log N.
    	\end{equation*}
    \end{teocit}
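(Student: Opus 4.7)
The plan is to adapt Montgomery's Fourier-analytic strategy. Setting
$$S(\mathbf{k}) := \sum_{\mathbf{p}\in\mathcal{P}} e^{2\pi i\mathbf{k}\cdot\mathbf{p}},$$
the Fourier expansion of $\mathds{1}_{\boldsymbol{\tau}+[0,\delta)^2}$ on $\mathbb{T}^2$ yields the pointwise identity
$$\mathcal{D}\bigl(\mathcal{P},\boldsymbol{\tau}+[0,\delta)^2\bigr)=\sum_{\mathbf{k}\ne\mathbf{0}}e^{-2\pi i\mathbf{k}\cdot\boldsymbol{\tau}}\,\widehat{\mathds{1}}_{[0,\delta)^2}(\mathbf{k})\,S(\mathbf{k}).$$
Applying Plancherel in $\boldsymbol{\tau}$ and then integrating in $\delta$ reduces the claim to showing
$$\sum_{\mathbf{k}\ne\mathbf{0}}|S(\mathbf{k})|^2\,W(\mathbf{k})\succcurlyeq\log N,\qquad W(\mathbf{k}):=\int_0^1\bigl|\widehat{\mathds{1}}_{[0,\delta)^2}(\mathbf{k})\bigr|^2\,\de\delta.$$
Because $\widehat{\mathds{1}}_{[0,\delta)^2}$ factors as a product of one-dimensional sinc kernels, a trigonometric computation using integer-period orthogonality in $\delta$ gives $W(\mathbf{k})\asymp(1+k_1^2)^{-1}(1+k_2^2)^{-1}$ on $\mathbb{Z}^2\setminus\{\mathbf{0}\}$.

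To extract the logarithm I would perform a Roth-style dyadic decomposition on the Fourier side. Set $n:=\lfloor\log_2 N\rfloor$ and partition the off-axis frequencies into hyperbolic aspect-ratio shells
$$H_j:=\bigl\{\mathbf{k}\in\mathbb{Z}^2\,:\,2^j\le|k_1|<2^{j+1},\ 2^{n-j}\le|k_2|<2^{n-j+1}\bigr\},\qquad 0\le j\le n.$$
Each shell contains $\asymp N$ lattice points and $W\asymp N^{-2}$ uniformly throughout, so, discarding the non-negative off-shell contributions, the target reduces to the aggregate estimate
$$\sum_{j=0}^{n}\sum_{\mathbf{k}\in H_j}|S(\mathbf{k})|^2\succcurlyeq N^{2}\log N.$$

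I would prove this by duality against Roth-type test functions. For each $j$, attach $\pm 1$ Rademacher signs to a dyadic rectangular tiling of $\mathbb{T}^2$ at aspect ratio $2^j:2^{n-j}$ (tiles of area $N^{-1}$) to build a Haar-like test function $R_j$ whose Fourier spectrum is essentially concentrated in $H_j$ and whose $L^2(\mathbb{T}^2)$-norm is bounded by an absolute constant. The Roth pigeonhole --- no $N$-point set can evade a product of dyadic rectangular tilings at every aspect ratio --- combined with an averaging over the Rademacher signs yields, for a suitable choice of signs,
$$\sum_{j=0}^{n}\Biggl|\sum_{\mathbf{p}\in\mathcal{P}}R_j(\mathbf{p})-N\!\int_{\mathbb{T}^2}\!R_j\,\de\mathbf{x}\Biggr|^{2}\succcurlyeq N\log N,$$
which by Plancherel transfers to the aggregate bound on $\sum_j\sum_{H_j}|S(\mathbf{k})|^2$.

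The main obstacle is the joint design of the $R_j$ and the Roth-style pigeonhole. One must engineer $R_j$ so that (i) its Fourier support is genuinely localised in $H_j$, up to tails whose $\ell^2$-mass is controlled, so that the weight $W\asymp N^{-2}$ is effectively uniform on the support; (ii) the tails near the frequency axes, where $W$ has the marginal behaviour $W(k_1,0)\asymp k_1^{-2}$, are absorbed as lower-order contributions; and (iii) the sign-averaging produces a bound uniform over all $\mathcal{P}$ of cardinality $N$. Balancing these three requirements is the technical heart of Montgomery's argument; once they are in place, the summation over the $n+1\asymp\log N$ scales is routine.
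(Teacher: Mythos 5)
This theorem is stated in the paper as a cited classical result (Montgomery, \emph{Ten Lectures}, Ch.~6); the paper does not reprove it, but it does record the key ingredient, namely the Cassels--Montgomery positivity lemma (Lemma~\ref{C-M}), and a weighted version of the counting argument in Theorem~\ref{t4}. Your opening reduction is correct: Parseval in $\boldsymbol{\tau}$ and the computation $W(\mathbf{k})\asymp(1+k_1^2)^{-1}(1+k_2^2)^{-1}$ are exactly right. The problem is everything after that.

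Your intermediate claim is false. You restrict to the disjoint shells $H_j=\{2^j\le|k_1|<2^{j+1},\ 2^{n-j}\le|k_2|<2^{n-j+1}\}$ and assert that the theorem reduces to $\sum_{j}\sum_{\mathbf{k}\in H_j}|S(\mathbf{k})|^2\succcurlyeq N^2\log N$. Take $\mathcal{P}$ to be the $N^{1/2}\times N^{1/2}$ lattice: then $S(\mathbf{k})\ne0$ forces $N^{1/2}\mid k_1$ and $N^{1/2}\mid k_2$, so both $|k_1|,|k_2|\ge N^{1/2}$; since on $H_j$ one has $|k_1||k_2|\asymp N$, only the $O(1)$ shells with $j\approx n/2$ meet the spectrum, each in $O(1)$ points, and the aggregate is $O(N^2)$, not $\Omega(N^2\log N)$. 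The same example shows that a single shell can carry no spectral mass at all, so no choice of test functions $R_j$ localised in $H_j$ can produce a per-shell lower bound uniform over $\mathcal{P}$; the ``Roth pigeonhole plus Rademacher averaging'' step, besides being left entirely unexecuted, is aimed at a statement that cannot be true. The correct route --- the one the paper points to --- is to apply the Cassels--Montgomery lemma to the \emph{full nested rectangles} $R_j=\{|k_1|\le 2^{j},\,|k_2|\le \kappa N2^{-j}\}$ of area $\asymp N$ (with $\kappa$ large so that $N|R_j|/4-c_UN^2\ge cN^2$), obtaining $\sum_{\mathbf{k}\in R_j\setminus U}|S(\mathbf{k})|^2\ge cN^2$ for each of the $\asymp\log N$ values of $j$. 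Summing over $j$ and noting that a frequency with $|k_1|\asymp2^a$, $|k_2|\asymp2^b$ lies in at most $(n-a-b+1)_+\preccurlyeq N^2W(\mathbf{k})$ of the rectangles converts the overlap count into the weight $W$, giving $\sum_{\mathbf{k}\ne\mathbf{0}}|S(\mathbf{k})|^2W(\mathbf{k})\succcurlyeq\log N$. No test functions or sign averaging are needed; the positivity of the Fej\'er-type polynomial in Lemma~\ref{C-M} is what replaces them.
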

    The proof exploits the convolution structure of discrepancy and uses a lower bound of Cassels \cite{MR0087709} for estimating exponential sums. 
    In 1996, Drmota \cite{MR1401710} proved Montgomery's estimate to be sharp since its substantial equivalence to Roth's one.\par 
    By relating continuous and discrete measures, discrepancy theory finds applications in a variety of fields of mathematics, and as examples, we refer the reader to \cite{MR1470456, MR1779341, MR2683232, MR3307667, MR3330354, MR3307692, MR4391422}. Hence, it is natural to replace the rectangles and squares in the previous theorems with more general sets and study which geometric properties come into play.
    Surprisingly, within the family of convex sets, the lower bound for the discrepancy can be much higher than the logarithm. Indeed, already in 1969, Schmidt \cite{MR0245532} showed that the discrepancy of a disc has a polynomial lower bound. Further, Montgomery's result is a quadratic average over translations and dilations, and therefore, it comes naturally to consider the whole class of affine transformations, including rotations.
\par

    In 1988, Beck \cite{MR0906524} got the following major result on the affine quadratic discrepancy in the case of complete rotations.
    \begin{teocit}[Beck]
    	Uniformly for every convex body $C\subset\mathbb{R}^2$, it holds
    	\begin{equation*}
    	\inf_{\# \mathcal{P}=N}\mathcal{D}_2(\mathcal{P},\, C,\,\mathbb{T}_{2\pi})\succcurlyeq |\partial C| N^{1/2},
    	\end{equation*}
     where $|\partial C|$ stands for the perimeter of $C$.
    \end{teocit}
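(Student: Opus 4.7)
The plan is to pass to the Fourier side, apply Lemma~\ref{CN} to identify the spectral weight, and close with a Cassels--Montgomery type lower bound for exponential sums. The whole argument is uniform in $C$, which is where the perimeter factor enters.

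First, expanding the periodization $\mathfrak{P}\{\mathds{1}_{[\boldsymbol{\tau},\delta,\theta]C}\}$ into its Fourier series on $\mathbb{T}^{2}$ and using \eqref{FourierProp} together with the translation rule for the Fourier transform yields
\begin{equation*}
\mathcal{D}(\mathcal{P}_{N},[\boldsymbol{\tau},\delta,\theta]C)=\sum_{\mathbf{n}\in\mathbb{Z}^{2}\setminus\{0\}}S_{\mathcal{P}_{N}}(\mathbf{n})\,e^{-2\pi i\boldsymbol{\tau}\cdot\mathbf{n}}\,\delta^{2}\widehat{\mathds{1}}_{C}(\delta\sigma_{-\theta}\mathbf{n}),
\end{equation*}
where $S_{\mathcal{P}_{N}}(\mathbf{n})=\sum_{\mathbf{p}\in\mathcal{P}_{N}}e^{2\pi i\mathbf{n}\cdot\mathbf{p}}$ (the $\mathbf{n}=0$ term cancels the normalisation $N\delta^{2}|C|$). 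Parseval in $\boldsymbol{\tau}$, the identity $\delta^{2}\widehat{\mathds{1}}_{C}(\delta\sigma_{-\theta}\mathbf{n})=\widehat{\mathds{1}}_{[\delta]C}(|\mathbf{n}|\mathbf{u}(\theta_{\mathbf{n}}-\theta))$ with $\theta_{\mathbf{n}}$ the polar angle of $\mathbf{n}$, and the rotational invariance of the integration domain $\mathbb{T}_{2\pi}$ then give
\begin{equation*}
\mathcal{D}_{2}(\mathcal{P}_{N},C,\mathbb{T}_{2\pi})=\sum_{\mathbf{n}\neq 0}|S_{\mathcal{P}_{N}}(\mathbf{n})|^{2}\int_{\mathbb{T}_{2\pi}}\!\int_{0}^{1}\bigl|\widehat{\mathds{1}}_{[\delta]C}(|\mathbf{n}|\mathbf{u}(\phi))\bigr|^{2}\de\delta\de\phi.
\end{equation*}

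At this point Lemma~\ref{CN} applies with $I=\mathbb{T}_{2\pi}$ and $\rho=|\mathbf{n}|$; since $P_{C}(\mathbb{T}_{2\pi})=|\partial C|$, the inner double integral is $\asymp|\partial C||\mathbf{n}|^{-3}$ uniformly in $C$, and hence
\begin{equation*}
\mathcal{D}_{2}(\mathcal{P}_{N},C,\mathbb{T}_{2\pi})\succcurlyeq|\partial C|\sum_{\mathbf{n}\in\mathbb{Z}^{2}\setminus\{0\}}|S_{\mathcal{P}_{N}}(\mathbf{n})|^{2}|\mathbf{n}|^{-3}.
\end{equation*}
To close I would invoke the Cassels--Montgomery type Lemma~\ref{C-M}, whose content is the universal lower bound $\sum_{\mathbf{n}\neq 0}|S_{\mathcal{P}_{N}}(\mathbf{n})|^{2}|\mathbf{n}|^{-3}\succcurlyeq N^{1/2}$. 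The standard route is to test the Fourier identity $\sum_{\mathbf{n}}\widehat{F}(\mathbf{n})|S_{\mathcal{P}_{N}}(\mathbf{n})|^{2}=\sum_{\mathbf{p},\mathbf{q}\in\mathcal{P}_{N}}F(\mathbf{p}-\mathbf{q})$ against the two-dimensional product Fejér kernel $F$ of scale $R\asymp N^{1/2}$: this $F\geq 0$ satisfies $\widehat{F}(\mathbf{0})=1$, $F(\mathbf{0})\asymp R^{2}$, and has $\widehat{F}$ supported in a box of side $R$ with $\widehat{F}(\mathbf{n})\leq 1$ throughout. Since $\sum_{\mathbf{p},\mathbf{q}}F(\mathbf{p}-\mathbf{q})\geq NF(\mathbf{0})$, choosing the constant in $R\asymp N^{1/2}$ large enough yields $\sum_{\mathbf{n}\neq 0}\widehat{F}(\mathbf{n})|S_{\mathcal{P}_{N}}(\mathbf{n})|^{2}\geq NF(\mathbf{0})-N^{2}\succcurlyeq N^{2}$; since on the spectral support of $F$ one has $|\mathbf{n}|^{-3}\succcurlyeq R^{-3}\widehat{F}(\mathbf{n})$, combining gives $R^{-3}N^{2}\asymp N^{1/2}$.

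The main obstacle is this final step: designing the auxiliary kernel. It must simultaneously be pointwise nonnegative on $\mathbb{T}^{2}$ (so the diagonal $\mathbf{p}=\mathbf{q}$ dominates the quadratic form), be spectrally concentrated in a disc of radius $\asymp N^{1/2}$ matching the $|\mathbf{n}|^{-3}$ weight, and have $F(\mathbf{0})$ large compared with $\widehat{F}(\mathbf{0})$. The product Fejér kernel fulfils all three in two dimensions; once Lemma~\ref{C-M} packages this device, the remaining steps (Fourier expansion, Parseval in $\boldsymbol{\tau}$, and insertion of Lemma~\ref{CN}) are essentially mechanical and, crucially, uniform in $C$, delivering the perimeter-weighted lower bound $|\partial C|N^{1/2}$.
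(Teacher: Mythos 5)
The paper only cites Beck's theorem without proof, but its own lower-bound argument for Theorem~\ref{t1} (Parseval in $\boldsymbol{\tau}$, the $\rho^{-3}P_C$ decay from Lemma~\ref{CN}, then Cassels--Montgomery via Theorem~\ref{t4} with $h=0$) is exactly the route you take, and your Fej\'er-kernel device is a minor variant of the squared exponential sum used in the paper's Lemma~\ref{C-M}. Your proposal is correct and essentially coincides with the paper's approach; the only caveat worth flagging is that the threshold in Lemma~\ref{CN} depends on $L_C/S_C$, so the uniformity in $C$ holds for the implied constant but not for the value of $N$ at which the asymptotic regime begins.
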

    A few years later, in an independent work, Montgomery \cite[Ch.~6]{MR1297543} obtained a similar result, dropping the hypothesis of convexity but requiring $\partial C$ to be a piecewise-$\mathcal{C}^1$ simple curve. By combining the results of Kendall \cite{MR0024929} and Podkorytov \cite{MR1166380}, the lower bound of Beck and Montgomery turns out to be sharp. Recently, Gennaioli and the author \cite{beretti2024fouriertransformbvfunctions} established a general result on the affine quadratic discrepancy that extends the estimates of Beck and Montgomery to a broad class of bounded variation functions; in particular, our arguments rely on geometric measure-theoretic techniques. Further, we point out that averaging over dilations is necessary and cannot be dropped, as the reader may verify in \cite{MR3500235}. Finally, by substituting $C$ in the previous theorem with a disc and by its invariance under rotations, we get that the quadratic discrepancy of a disc averaged over translations and dilations has a sharp lower bound of order $N^{1/2}$.\par
    The quadratic discrepancy of planar convex bodies averaged over translations and dilations has been widely studied. For example, Drmota \cite{MR1401710} showed that the sharp $\log N$ lower bound holds not only for squares but for the broader family of convex polygons. More recently, Brandolini and Travaglini \cite{MR4358540} gave sharp lower bounds for such quadratic discrepancy on a broad class of planar convex bodies with a piecewise-$\mathcal{C}^2$ boundary. Within the same class of planar convex bodies, they retrieved sharp estimates of all the polynomial orders between $N^{1/2}$ and $N^{2/5}$, which is the same range as in our results.
    \par
    The affine quadratic discrepancy with respect to intervals of rotations was still an open matter. Recently, Bilyk and Mastrianni \cite{MR4585469} got partial results studying the case of a square, and the questions raised thereafter motivated the current work; indeed, we disproof the expectations stated at the end of their paper, where the authors suggested that the affine quadratic discrepancy behaves independently of the interval considered, therefore always as in the case of complete rotations. We also mention that the authors in \cite{MR2861537, MR3453360} investigated the discrepancy of rectangles averaged over sets of (possibly unaccountably many) rotations with empty interiors, and interestingly, the results heavily depend on Diophantine approximation properties.    

    \section{Index of geometric notation}
    \begin{multicols}{2}
    \noindent $\eta_p$ at \eqref{D1}\\
    $\boldsymbol{\Gamma}_C$ at Definition~\ref{D2}\\
    $\nu_C$, $\nu_C^-$, $\nu_C^+$ at Definition~\ref{D2}\\
    $\mathcal{T}_C$ at Definition~\ref{D3}\\
    $\psi_C$ at Definition~\ref{D3}\\
    $K_C$ at Definition~\ref{Corde1}\\
    $\gamma_C$ at Definition~\ref{Corde1}\\
    $L_C$ and $S_C$ at Definition~\ref{Corde1}\\
    $P_C$ at Definition~\ref{D4}\\
    $s_C^-$ and $s_C^+$ at Definition~\ref{Corde2}\\
    $s_C^{o}$, $s_C^{o^-}$, $s_C^{o^+}$ at Definition~\ref{Corde2}\\
    $K_C^-$ and $K_C^+$ at Definition~\ref{Corde2}
    
    \end{multicols}

     \section{Proofs: Asymptotic behaviour of the Fourier transform}\label{S2}
	
	Let us start by exploiting the convolutional structure of \eqref{Discrepancy1}, and show how the Fourier transform comes into play. Let $C\subset\mathbb{R}^2$ be a convex body. Consider $\mu_{\rm L}$ to be the Lebesgue measure on $\mathbb{T}^2$, and for a point $\mathbf{p}\in\mathbb{T}^2$, consider $\mu_{\rm D}(\mathbf{p})$ to be the Dirac delta centered at $\mathbf{p}$. By setting
\begin{equation*}
    \tilde{\mu}=\sum_{\mathbf{p}\in\mathcal{P}_N}\mu_{\rm D}(-\mathbf{p})-N\mu_{\rm L},
\end{equation*}
we get that
\begin{equation*}
\mathcal{D}(\mathcal{P}_N,\, [\boldsymbol{\tau}] C)=\int_{\mathbb{T}^2}\mathfrak{P}\{\mathds{1}_C\}({x-\boldsymbol{\tau}})\de \tilde{\mu}(-{x})=\left(\mathfrak{P}\{\mathds{1}_C\}\ast\tilde{\mu}\right)(\boldsymbol{\tau}).
\end{equation*}
Now, for $f\in L^1(\mathbb{T}^2)$ or $f\in\mathcal{M}(\mathbb{T}^2)$ (that is, the vector space of finite measures on $\mathbb{T}^2$ with values in $\mathbb{R}$), we let 
\begin{equation*}
    {\mathcal{F}}\{f\}\colon \mathbb{Z}^2\to\mathbb{C}
\end{equation*}
be the function of the Fourier coefficients of $f$. In particular, it is not difficult to see that, for every $\mathbf{n}\in\mathbb{Z}^2$, it holds
\begin{equation*}
    {\mathcal{F}}\circ \mathfrak{P}\{\mathds{1}_C\}(\mathbf{n})=\widehat{\mathds{1}}_C(\mathbf{n}).
\end{equation*}
Therefore, by applying Parseval's identity on $\mathbb{T}^2$ and by \eqref{FourierProp} we get
\begin{align*}
    \int_{\mathbb{T}^2}\left|\mathcal{D}(\mathcal{P}_N,\, [\boldsymbol{\tau},\delta,\theta]C)\right|^2\de\boldsymbol{\tau}&=\int_{\mathbb{T}^2}\left|(\mathfrak{P}\{\mathds{1}_{[\delta,\theta]C}\} \ast \tilde{\mu})\right|^2(\boldsymbol{\tau})\de\boldsymbol{\tau}\\
    &=\sum_{{\mathbf{n}}\in\mathbb{Z}^2}\left|{\mathcal{F}}\circ \mathfrak{P}\{\mathds{1}_{[\delta,\theta]C}\}({\mathbf{n}})\right|^2\left|{\mathcal{F}}\{\tilde{\mu}\}({\mathbf{n}})\right|^2\\
    &=\sum_{{\mathbf{n}}\in\mathbb{Z}_*^2}\left|\widehat{\mathds{1}}_{[\delta,\theta]C}({\mathbf{n}})\right|^2\left|\sum_{\mathbf{p}\in\mathcal{P}_N}e^{2 \pi i \mathbf{p}\cdot{\mathbf{n}}}\right|^2,\\
    &=\delta^2\sum_{{\mathbf{n}}\in\mathbb{Z}_*^2}\left|\widehat{\mathds{1}}_C(\delta\sigma_{-\theta}\mathbf{n})\right|^2\left|\sum_{\mathbf{p}\in\mathcal{P}_N}e^{2 \pi i \mathbf{p}\cdot{\mathbf{n}}}\right|^2,
\end{align*}
where, for the sake of notation, we have set $\mathbb{Z}^2_*=\mathbb{Z}^2\setminus\{\mathbf{0}\}$.

		In this first section, we study the asymptotic behaviour of $\widehat{\mathds{1}}_{C}$. Namely, letting $\theta\in\mathbb{T}_{2\pi}$ be an angle and considering $\rho$ to be a real positive number, we are concerned with the decay of
  \begin{equation*}
      \widehat{\mathds{1}}_C(\rho\,\uthe)\quad\text{as}\quad\rho\to+\infty.
  \end{equation*}
  First notice that, since $\mathds{1}_C$ is a real function, it holds
\begin{equation*}
\left|\widehat{\mathds{1}}_{C}(\rho\,\uthe)\right|=\left|\widehat{\mathds{1}}_{C}(\rho\,\mathbf{u}(\theta+\pi))\right|.
\end{equation*}
  Without loss of generality assume $\theta=0$, so that
	\begin{equation*}
	\widehat{\mathds{1}}_C\left((\rho,0)\right)=\int_{\mathbb{R}}\int_{\mathbb{R}}\mathds{1}_{C}(x_1,x_2)e^{-2\pi i \rho x_1}\de x_1\de x_2=\int_{\mathbb{R}}g(x_1)e^{-2\pi i \rho x_1}\de x_1=\widehat{g}(\rho),
	\end{equation*}
	where have set
	\begin{equation}\label{e11}
	g(t)=\int_{\mathbb{R}}\mathds{1}_C(t,x_2)\de x_2.
	\end{equation}
	Since $C$ is convex, the non-negative function $g$ is supported and concave on an interval $[a,b]\subset\mathbb{R}$. Therefore, we are led to study the Fourier transform of such a one-dimensional function, and to proceed, we define an auxiliary tool.
	\begin{defn}
		Let $g:\mathbb{R}\to\mathbb{R}$ be a non-negative function supported and concave on $[a,b]$, then for every $\lambda \in\left[0,\frac{b-a}{2}\right]$ we define the height of g at distance $\lambda$ from the support as
		\begin{equation*}
		\zeta_g(\lambda )=\max\left\{g(a+\lambda ),g(b-\lambda )\right\}.
		\end{equation*}
	\end{defn}
	We remark on the duality between the latter quantity and the chord in Definition~\ref{Corde1}, which is strongly related to the decay of the Fourier transform of $\mathds{1}_C$. It holds the following estimate, obtained through a simple geometric argument. In particular, notice that the threshold and the values involved depend solely on the diameters of $C$.
	\begin{prop}\label{r0}
	Let $C\subset\mathbb{R}^2$ be a convex body. For every $\theta\in\mathbb{T}_{2\pi}$ and for every $\rho\geq2/S_C$, it holds
	\begin{equation*}
	\gamma_C(\theta,\rho^{-1})\geq\frac{S_C}{L_C}\rho^{-1}.
	\end{equation*}
	\end{prop}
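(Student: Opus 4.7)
The plan is to reduce the statement to a concavity argument for the one-dimensional cross-section function $g$ defined in \eqref{e11}. By rotational invariance I may take $\theta=0$, so that $\uthe=(1,0)$ and $g$ is non-negative, concave, and supported on the projection $[a,b]$ of $C$ onto the $x$-axis. Unravelling the definitions, $|K_C(0,\lambda)|=g(a+\lambda)$ and $|K_C(\pi,\lambda)|=g(b-\lambda)$, hence
\begin{equation*}
\gamma_C(0,\lambda)=\max\{g(a+\lambda),\,g(b-\lambda)\},
\end{equation*}
and I let $x^*\in[a,b]$ be a point at which $g$ attains its maximum.

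The heart of the argument rests on three elementary geometric inequalities. First, $g(x^*)\geq S_C$, directly from the definition of $S_C$ as $\min_{\theta'}\max_\lambda|K_C(\theta',\lambda)|$. Second, $b-a\leq L_C$, since the width of $C$ in any direction is bounded above by its diameter. Third, and slightly less immediate, $S_C\leq b-a$: chords perpendicular to $(0,1)$ are horizontal and hence have length at most $b-a$, which yields $S_C\leq\max_\lambda|K_C(\pi/2,\lambda)|\leq b-a$.

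With these in hand, writing $\lambda=1/\rho$, the hypothesis $\rho\geq 2/S_C$ becomes $2\lambda\leq S_C\leq b-a=(x^*-a)+(b-x^*)$, so that at least one of the two summands is $\geq\lambda$; say $x^*-a\geq\lambda$, the opposite case being symmetric, so that $a+\lambda\in[a,x^*]$. Concavity of $g$ together with $g(a)\geq 0$ then produces
\begin{equation*}
g(a+\lambda)\geq\frac{\lambda}{x^*-a}\,g(x^*)\geq\frac{\lambda S_C}{L_C},
\end{equation*}
where the last step uses $g(x^*)\geq S_C$ and $x^*-a\leq b-a\leq L_C$. Since $\gamma_C(0,\lambda)\geq g(a+\lambda)$, the claimed bound follows.

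The only non-obvious input is the comparison $S_C\leq b-a$, which guarantees that $\lambda$ is small enough relative to $\max(x^*-a,b-x^*)$ for the concavity estimate to bite; everything else is a direct one-dimensional computation once one has reduced to the concave function $g$.
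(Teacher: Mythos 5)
Your proof is correct and follows the same route as the paper: reduce to the concave cross-section function $g$ of \eqref{e11}, establish $S_C\leq\max g$ and $S_C\leq b-a\leq L_C$ (the paper's \eqref{Triv}), and conclude by the concavity of $g$ on its support. You merely spell out the "easy geometric observations" the paper leaves implicit, namely that $2\rho^{-1}\leq S_C\leq b-a$ guarantees one of the two endpoints at distance $\rho^{-1}$ lies between an endpoint of the support and the maximizer of $g$, where the linear lower bound applies.
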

\begin{proof}
	Without loss of generality, suppose $\theta=0$ and define $g$ as in \eqref{e11}. In particular, notice that
	\begin{equation*}
	\zeta_g(\rho^{-1})=\gamma_{C}(0,\rho^{-1}),
	\end{equation*}
	so that it is enough to estimate $g$. It is not difficult to see that
	\begin{equation}\label{Triv}
	S_C\leq\max_{x\in\mathbb{R}} g(x)\leq L_C\quad\text{and}\quad S_C\leq|\text{supp}(g)|\leq L_C,
	\end{equation}
	and by the concavity of $g$ on its support, it follows from some easy geometric observations that, for every $\rho\geq2/S_C$, it holds
	\begin{equation*}
	g(\rho^{-1})\geq\frac{\max_{x\in\mathbb{R}} g(x)}{|\text{supp}(g)|}\rho^{-1}\geq\frac{S_C}{L_C}\rho^{-1}.
	\end{equation*}
\end{proof}
   We state a classic upper bound on such one-dimensional functions due to Podkorytov \cite{MR1166380}. For more results in this direction, we refer the interested reader to \cite[Ch.~8]{MR3307692}.
    \begin{lemcit}[Podkorytov]
    	Let $f:\mathbb{R}\to\mathbb{R}$ be a non-negative continuous function supported and concave on $[-1,1]$, then for every value $s\geq1$ it holds
    	\begin{equation*}
    	\left|\widehat{f}(s)\right|\leq s^{-1}\zeta_f(s^{-1}).
    	\end{equation*}
    \end{lemcit}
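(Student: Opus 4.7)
The plan is to apply the classical ``shift by half-period'' identity twice in the defining integral for $\widehat{f}(s)$, then use concavity to localize the resulting second-order difference near the boundary of the support.

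Set $h=1/(2s)$, so that $e^{-2\pi i s h}=-1$. A change of variable yields $\widehat{f}(s)=-\int f(x-h)e^{-2\pi i sx}\,dx$; averaging with the original expression produces $2\widehat{f}(s)=\int(f(x)-f(x-h))e^{-2\pi i sx}\,dx$, and iterating once more gives
\[
4\widehat{f}(s)=\int_{\mathbb{R}}\Delta_h^2 f(x)\,e^{-2\pi i sx}\,dx,\qquad\Delta_h^2 f(x):=f(x)-2f(x-h)+f(x-2h),
\]
so that $4|\widehat{f}(s)|\le\int_{\mathbb{R}}|\Delta_h^2 f(x)|\,dx$.

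Whenever $x-2h,\,x-h,\,x\in[-1,1]$, i.e.\ for $x\in[-1+2h,1]$, the concavity of $f$ yields $f(x-h)\ge\tfrac12(f(x-2h)+f(x))$, hence $\Delta_h^2 f(x)\le 0$. Consequently $(\Delta_h^2 f)^+$ is supported in the two boundary strips $[-1,-1+2h]$ and $[1,1+2h]$. Combined with the identity $\int_{\mathbb{R}}\Delta_h^2 f=0$ coming from translation invariance, this gives $\int|\Delta_h^2 f|=2\int(\Delta_h^2 f)^+$. On the left strip $f(x-2h)=0$, so $(\Delta_h^2 f)^+(x)\le f(x)$; on the right strip analogously $(\Delta_h^2 f)^+(x)\le f(x-2h)$. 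After a change of variables this becomes
\[
4|\widehat{f}(s)|\le 2\int_{-1}^{-1+2h} f(x)\,dx+2\int_{1-2h}^{1} f(y)\,dy.
\]

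It then remains to bound the sum of these two boundary integrals by a small multiple of $h\,\zeta_f(2h)$, after which the identification $2h=s^{-1}$ yields the claim. If the peak $x_0$ of $f$ lies in the interior $[-1+2h,1-2h]$, then $f$ is monotone on each strip and each integral is bounded directly by $2h\,\zeta_f(2h)$. The main obstacle is the peak-near-endpoint regime, where $x_0$ lies inside one of the strips; in this case one estimates $f(x_0)$ in terms of $\zeta_f(2h)$ via the chord from the peak to the opposite endpoint of the support (which by concavity gives, e.g., $f(-1+2h)\ge(1-h)f(x_0)$), and uses that the mass on the opposite strip is controlled by $h^2 f(x_0)$, small enough to compensate the slight excess on the peak's own strip. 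This balancing is the technical heart of the argument and, once carried out, produces $|\widehat{f}(s)|\le s^{-1}\zeta_f(s^{-1})$.
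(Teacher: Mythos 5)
The paper does not actually prove this lemma --- it is quoted as a classical result of Podkorytov with a pointer to the literature --- so your attempt has to stand on its own. The first half of your argument is correct and is indeed the standard route: the identities $4\widehat{f}(s)=\int\Delta_h^2f(x)e^{-2\pi isx}\,dx$ with $h=1/(2s)$, the sign of $\Delta_h^2 f$ on $[-1+2h,1]$ forced by midpoint concavity, the cancellation $\int\Delta_h^2f=0$, and the resulting bound
\begin{equation*}
4\left|\widehat{f}(s)\right|\leq 2\int_{-1}^{-1+2h}f+2\int_{1-2h}^{1}f
\end{equation*}
are all sound, as is the easy case where the maximizer lies in $[-1+2h,1-2h]$.

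The gap is in the peak-near-endpoint case, which you yourself flag as the technical heart but do not carry out, and the roadmap you give for it is quantitatively wrong. Writing $M=\max f$ and supposing the peak lies in the left strip, your claim that the mass on the opposite strip is $O(h^2M)$ is false: take $f$ piecewise linear through $(-1,0)$, $(-1+\delta,M)$, $(1-\delta,M)$, $(1,0)$ with $\delta\ll h$; this is concave, yet $\int_{1-2h}^{1}f\approx 2hM$. The correct general bounds are only $\int_{-1}^{-1+2h}f\leq 2hM$, $\int_{1-2h}^{1}f\leq 2hf(1-2h)$, and $\zeta_f(2h)\geq\max\{(1-h)M,\,f(1-2h)\}$ (your chord estimate), and these do \emph{not} close: they give
\begin{equation*}
\int_{-1}^{-1+2h}f+\int_{1-2h}^{1}f\leq 2h\,\frac{2-h}{1-h}\,\zeta_f(2h)>4h\,\zeta_f(2h),
\end{equation*}
with genuine failure whenever $f(1-2h)/M\in(1-2h,1)$. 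To recover the constant $1$ you need extra input that your sketch discards --- for instance keeping the term $-2f(x-h)$ in $(\Delta_h^2f)^+$ on the peak's own strip rather than bounding it by $f(x)$, or exploiting $f(-1)=0$ to show the peak-strip integral is strictly below $2hM$ by the right amount --- and this balancing is precisely what is missing. As written, the proof establishes the inequality only up to an unspecified constant, not the stated bound.
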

     Let us show how the latter lemma evolves into estimates on the decay of the Fourier transform of $\mathds{1}_C$. Consider a non-negative function $g:\mathbb{R}\to\mathbb{R}$ supported and concave on a bounded interval $[a,b]\subset\mathbb{R}$, and apply the affine change of variable
    \begin{equation}\label{e9}
    f(s)=g\left(\frac{b+a}{2}+s\frac{b-a}{2}\right),
	\end{equation}
	hence obtaining
	\begin{equation}\label{FourVar}
	\left|\widehat{f}(s)\right|=\frac{2}{b-a}\left|\widehat{g}\left(\frac{2s}{b-a}\right)\right|.
	\end{equation}
    Further, notice that it holds
    \begin{equation*}
    f(\pm(1- \lambda) )=g\left(\frac{b+a}{2}\pm(1- \lambda )\frac{b-a}{2}\right),
    \end{equation*}
    and therefore, for every $\lambda \in\left[0,\frac{b-a}{2}\right]$, we get
    \begin{equation*}
    \zeta_f(\lambda )=\zeta_g\!\left(\lambda \,\frac{b-a}{2}\right).
    \end{equation*}
    Hence, by applying the latter lemma to $f$ and by translating into terms of $g$, we have that, for every $s\geq1$, it holds
    \begin{equation*}
    \frac{2}{b-a}\left|\widehat{g}\left(\frac{2s}{b-a}\right)\right|\leq s^{-1}\zeta_g\!\left(\frac{b-a}{2s}\right),
    \end{equation*}
    so that by the change of variable
    \begin{equation*}
    \rho=2s/(b-a),
    \end{equation*}
    we get that, for every $\rho\geq 2/(b-a)$, it holds
    \begin{equation*}
    \left|\widehat{g}(\rho)\right|\leq\rho^{-1}\zeta_g\left(\rho^{-1}\right).
    \end{equation*}
    In particular, we remark that $|b-a|$ is bounded from below by $S_C$ independently on the choice of $\theta$, and therefore, by turning into terms of the convex body $C$, we get the following formulation.
    \begin{lem}\label{Pod1}
			Let $C\subset\mathbb{R}^2$ be a convex body. For every $\theta\in\mathbb{T}_{2\pi}$ and for every $\rho\geq2/S_C$, it holds
			\begin{equation*}
			\left|\widehat{\mathds{1}}_C\left(\rho\,\uthe\right)\right|\leq \rho^{-1}\gamma_{C}(\theta,\rho^{-1}).
			\end{equation*}
    \end{lem}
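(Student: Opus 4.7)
The plan is to reduce the two-dimensional statement to a one-dimensional one along the line already set up in the excerpt, and then to invoke Podkorytov's lemma. First, by the rotational symmetry of the Fourier transform (formula \eqref{FourierProp}), it suffices to prove the estimate for $\theta=0$, since the general case follows by replacing $C$ with $\sigma_{-\theta}C$ (which has the same diameters $L_C$ and $S_C$ and satisfies $\gamma_{\sigma_{-\theta}C}(0,\lambda)=\gamma_C(\theta,\lambda)$). With $\theta=0$, Fubini's theorem gives $\widehat{\mathds{1}}_C((\rho,0))=\widehat g(\rho)$, where $g(t)=\int_{\mathbb{R}}\mathds{1}_C(t,x_2)\de x_2$ is a nonnegative concave function supported on some interval $[a,b]$, with $b-a\geq S_C$.

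Next, I would rescale $g$ to the reference interval $[-1,1]$ via the affine change of variables in \eqref{e9}, obtaining a function $f$ that is nonnegative, continuous, and concave on $[-1,1]$; the identities in \eqref{FourVar} and the formula $\zeta_f(\lambda)=\zeta_g(\lambda(b-a)/2)$ displayed just before the statement make the translation transparent. Applying Podkorytov's lemma to $f$ at the parameter $s=\rho(b-a)/2$, which satisfies $s\geq 1$ precisely when $\rho\geq 2/(b-a)$, and unwinding the substitution yields
\begin{equation*}
\left|\widehat g(\rho)\right|\leq \rho^{-1}\zeta_g\!\left(\rho^{-1}\right)
\end{equation*}
for every $\rho\geq 2/(b-a)$.

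To finish, I identify $\zeta_g$ with $\gamma_C$: by Definition~\ref{Corde1} in the case $\theta=0$, the chord $K_C(0,\lambda)$ is the vertical segment $\{x_1=a+\lambda\}\cap C$, whose length equals $g(a+\lambda)$, and similarly $|K_C(\pi,\lambda)|=g(b-\lambda)$; taking the maximum gives $\gamma_C(0,\lambda)=\zeta_g(\lambda)$. Since $b-a\geq S_C$, the threshold $\rho\geq 2/S_C$ implies the threshold $\rho\geq 2/(b-a)$, and the claimed inequality follows. There is no serious obstacle here: essentially all the ingredients have been set up in the paragraphs preceding the lemma, and the only thing to verify carefully is the bookkeeping identification $\zeta_g(\rho^{-1})=\gamma_C(\theta,\rho^{-1})$ after reducing to $\theta=0$, which is a direct consequence of the definitions.
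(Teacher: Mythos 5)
Your proposal is correct and follows essentially the same route as the paper: reduce to $\theta=0$, pass to the one-dimensional concave profile $g$, rescale to $[-1,1]$ via \eqref{e9}, apply Podkorytov's lemma, unwind using \eqref{FourVar} and $\zeta_f(\lambda)=\zeta_g(\lambda(b-a)/2)$, and conclude via the identification $\zeta_g(\rho^{-1})=\gamma_C(\theta,\rho^{-1})$ together with $b-a\geq S_C$. This is exactly the derivation the paper carries out in the discussion immediately preceding the lemma.
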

	We now state an essential result that establishes both a lower and an upper bound on the Fourier transform of one-dimensional functions as the one in \eqref{e11}.
    \begin{lemcit}[Brandolini-Travaglini]
    	There exist positive absolute constants $\kappa_1<1<\kappa_2$ such that, uniformly for every non-negative continuous function $f:\mathbb{R}\to\mathbb{R}$ supported and concave on $[-1,1]$, it holds
    	\begin{equation*}
    	\int_{\kappa_1}^{ \kappa_2}\left|\widehat{f}(\delta s)\right|^2\de \delta\asymp s^{-2}\zeta^2_f( s^{-1}).
    	\end{equation*}
    \end{lemcit}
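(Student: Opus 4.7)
The plan splits into the upper bound, which follows directly from Podkorytov's lemma, and the lower bound, which is delicate. First a concavity observation: since $f$ is non-negative, continuous, and concave on $[-1,1]$ with $f(\pm 1)=0$, the three-point concavity inequality applied to $(1-\lambda_2,f(1-\lambda_2))$, $(1-\lambda_1,f(1-\lambda_1))$, $(1,0)$ for $0<\lambda_1<\lambda_2$ gives $f(1-\lambda_1)/\lambda_1\geq f(1-\lambda_2)/\lambda_2$, and likewise at the left endpoint, so $\lambda\mapsto\zeta_f(\lambda)/\lambda$ is non-increasing. Hence for $\delta\in[\kappa_1,\kappa_2]$ with $\delta s\geq 1$, one has $\zeta_f((\delta s)^{-1})\leq\kappa_1^{-1}\zeta_f(s^{-1})$, and Podkorytov's lemma yields $|\widehat f(\delta s)|^2\leq\kappa_1^{-4}\delta^{-2}s^{-2}\zeta_f^2(s^{-1})$; integrating gives the upper bound.

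For the lower bound, without loss of generality (by reflecting $x\mapsto -x$) assume $\zeta_f(s^{-1})=f(1-s^{-1})=:h$. The strategy is to isolate the boundary behaviour of $f$ at the endpoints as the dominant oscillatory contribution to $\widehat f$. Two distributional integrations by parts, tracking the jumps of $f'$ at $\pm 1$ where $f$ meets its zero extension, yield the identity
\[
4\pi^2 t^2\widehat f(t)=\widehat\mu(t)-\beta e^{2\pi i t}-\alpha e^{-2\pi i t},
\]
where $\alpha=-\lim_{x\to 1^-}f'(x)\geq 0$, $\beta=\lim_{x\to -1^+}f'(x)\geq 0$, and $\mu=-f''|_{(-1,1)}$ is a non-negative Radon measure of total mass $\alpha+\beta$. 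A chord estimate from concavity gives $\alpha+\beta\asymp s\,\zeta_f(s^{-1})$ in the generic regime of finite one-sided derivatives. Expanding $|4\pi^2(\delta s)^2\widehat f(\delta s)|^2$, the purely oscillatory piece $\alpha^2+\beta^2+2\alpha\beta\cos(4\pi\delta s)$ averages on $[\kappa_1,\kappa_2]$ to $(\alpha^2+\beta^2)(\kappa_2-\kappa_1)+O(s^{-1})$, while the cross terms of the form $\widehat\mu(\delta s)\,e^{\pm 2\pi i\delta s}$ integrate to $O(s^{-1})$ by a Riemann--Lebesgue argument (the singular phases $\pm 1$ correspond to the endpoints of the support, whose masses have already been separated out as $\alpha$ and $\beta$). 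The diagonal piece $|\widehat\mu|^2$ is non-negative and only contributes positively. Altogether one obtains $\int_{\kappa_1}^{\kappa_2}|4\pi^2(\delta s)^2\widehat f(\delta s)|^2\,d\delta\gtrsim(\alpha^2+\beta^2)\asymp s^2\zeta_f^2(s^{-1})$, equivalent to the claimed lower bound.

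The main obstacle is uniformity in $f$, with two pathologies to be handled. First, obtaining a quantitative Riemann--Lebesgue estimate on the cross terms uniformly in $f$ requires splitting $\mu$ into a piece supported at distance $\geq s^{-1}$ from $\pm 1$, where decay is explicit, and an endpoint layer which must be absorbed into the boundary terms without disturbing the main estimate. Second, when $f'(\pm 1)=\pm\infty$ (as for $f(x)=\sqrt{1-x^2}$, where $\zeta_f(\lambda)/\lambda\to+\infty$), the identity above fails: one must instead localise the integration by parts to $[-1+\eta,1-\eta]$ with a carefully chosen $\eta\asymp s^{-1}$ and treat the boundary spikes by the direct chord approximation underlying Podkorytov's proof. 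Both cases are handled uniformly in the original Brandolini--Travaglini argument, and the constants $\kappa_1<1<\kappa_2$ are chosen to ensure that the averaged $\cos(4\pi\delta s)$ contributes a non-trivial amount over the range of integration.
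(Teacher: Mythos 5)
First, note that the paper does not prove this lemma at all: it is quoted as a known result of Brandolini--Travaglini (attributed originally to an unpublished argument of Podkorytov), and the paper explicitly points to \cite{MR4358540} for a proof via the moduli of smoothness of $f$. So there is no in-paper proof to match; your attempt must stand on its own. Your upper bound is fine: Podkorytov's lemma plus the monotonicity of $\lambda\mapsto\zeta_f(\lambda)/\lambda$ handles $\delta\le 1$, though for $\delta\in[1,\kappa_2]$ you need the complementary doubling estimate $\zeta_f(\lambda')\le 2\zeta_f(\lambda)$ for $\lambda'\le\lambda$ (the one the paper itself uses), since the slope monotonicity runs the wrong way there; this only changes the constant.

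The lower bound, however, has a genuine gap. Your two central claims --- that $\alpha+\beta\asymp s\,\zeta_f(s^{-1})$ and that $\int_{\kappa_1}^{\kappa_2}\bigl|4\pi^2(\delta s)^2\widehat f(\delta s)\bigr|^2\de\delta\gtrsim\alpha^2+\beta^2$ --- are both false. Take $f$ to be the asymmetric triangle with apex at $-1+\varepsilon$ and height $1$, with $\varepsilon\ll s^{-1}$. Then $\beta=1/\varepsilon$, $\alpha\approx 1/2$, $\mu=(\alpha+\beta)\,\delta_{-1+\varepsilon}$, and
\begin{equation*}
4\pi^2 t^2\widehat f(t)=\tfrac{1}{\varepsilon}\,e^{2\pi i t}\bigl(e^{-2\pi i\varepsilon t}-1\bigr)+\tfrac12\bigl(e^{2\pi i t}e^{-2\pi i \varepsilon t}-e^{-2\pi i t}\bigr)=O(t)\quad\text{for }t\ll\varepsilon^{-1},
\end{equation*}
so the left-hand side of your key inequality is $O(s^2)$ while $\alpha^2+\beta^2\approx\varepsilon^{-2}$ is arbitrarily larger. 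The mechanism is exactly the cross term $\widehat\mu(\delta s)\,e^{2\pi i\delta s}$ that you dismiss as $O(s^{-1})$ by Riemann--Lebesgue: when $\mu$ carries mass $\gg s\zeta_f(s^{-1})$ within distance $o(s^{-1})$ of an endpoint, that cross term cancels the boundary exponential to leading order, and no uniform Riemann--Lebesgue bound is available. The correct invariant at frequency $s$ is the chord slope $s\,\zeta_f(s^{-1})$ (equivalently a second difference of $f$ at scale $s^{-1}$), not the one-sided derivatives $\alpha,\beta$; your closing remark about ``absorbing the endpoint layer into the boundary terms'' is precisely where the entire difficulty lives, and carrying it out uniformly in $f$ is the content of the moduli-of-smoothness argument in \cite{MR4358540}. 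As written, the proposal does not constitute a proof of the lower bound.
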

	Actually, it was Podkorytov who first achieved the latter estimate and then showed it to Travaglini during a personal communication in 2001, but the original proof has never been published. The authors in \cite[Lem.~23]{MR4358540} give an original proof by relating the Fourier transform of such $f$ with its moduli of smoothness (see \cite[Ch.~2]{MR1261635}). Now, with Proposition~\ref{r0} and Lemma~\ref{Pod1} in mind, we turn this result into estimates for the Fourier transform of $\mathds{1}_C$.
    \begin{proof}[Proof of Lemma~\ref{t3}]
    	Let us start by proving the upper bound. First, we set $\rho_0=2/S_C$ and consider $\rho\geq\rho_0$. Then, it is useful to split the integral as
   	\begin{equation}\label{e10}
    \begin{split}&\int_{0}^{1}\left|\widehat{\mathds{1}}_{[\delta] C}(\rho\,\uthe)\right|^2\de \delta=\\
    &=\int_{0}^{\rho_0/\rho}\left|\widehat{\mathds{1}}_{[\delta] C}(\rho\,\uthe)\right|^2\de \delta\,+\,\int_{\rho_0/\rho}^{1}\left|\widehat{\mathds{1}}_{[\delta] C}(\rho\,\uthe)\right|^2\de \delta.
    \end{split}
    \end{equation}
   	By basic properties of the Fourier transform and the fact that $|C|\leq L_C^2$ (this easily follows by \eqref{Triv}), we obtain
   	\begin{equation*}
   	\left\|\widehat{\mathds{1}}_{[\delta] C}\right\|_{L^\infty(\mathbb{R}^2)}\leq\left\|\mathds{1}_{[\delta] C}\right\|_{L^1(\mathbb{R}^2)}=\delta^2\left|C\right|\leq\delta^2L_C^2,
    \end{equation*}
   	so that, for the first integral in the right-hand term of \eqref{e10}, we get
   	\begin{equation}\label{e12}
   	\int_{0}^{\rho_0/\rho}\left|\widehat{\mathds{1}}_{[\delta] C}(\rho\,\uthe)\right|^2\de \delta\leq L_C^4\int_{0}^{\rho_0/\rho}\delta^4\de \delta=\frac{32 L_C^4}{5 S_C^{5}}\rho^{-5}.
    \end{equation}
   	Now, notice that by the concavity of $\left|K_C(\theta,\cdot)\right|$ on its support, we have that, for every angle $\theta\in\mathbb{T}_{2\pi}$, for every $\rho>0$, and for every $\delta\in (0,1]$, it holds
   	\begin{equation*}
   		\gamma_C(\theta,\delta^{-1}\rho^{-1})\leq \delta^{-1}\,\gamma_C(\theta,\rho^{-1}).
   	\end{equation*}
    	Therefore, by the latter observation, and by \eqref{FourierProp} and Lemma~\ref{Pod1}, for the second integral in the right-hand term of \eqref{e10} we get
    	\begin{equation*}
    	\begin{split}
    		\int_{\rho_0/\rho}^{1}\left|\widehat{\mathds{1}}_{[\delta] C}(\rho\,\uthe)\right|^2\de \delta&=\int_{\rho_0/\rho}^{1}\delta^4\left|\widehat{\mathds{1}}_{C}(\delta\rho\,\uthe)\right|^2\de \delta\\
    		&\leq\int_{\rho_0/\rho}^{1}\delta^4\left|\delta^{-1}\rho^{-1}\gamma_C(\theta,\delta^{-1}\rho^{-1})\right|^2\de \delta\\
    		&\leq\int_{\rho_0/\rho}^{1}\delta^2\left|\rho^{-1}\delta^{-1}\gamma_C(\theta,\rho^{-1})\right|^2\de \delta\leq\rho^{-2}\gamma_C^2(\theta,\rho^{-1}).
    	\end{split}
    	\end{equation*}
    	By Proposition~\ref{r0}, it holds
    	\begin{equation*}
    	\rho^{-2}\gamma_{C}^2(\theta,\rho^{-1})\geq \frac{S_C^2}{L_C^{2}}\rho^{-4},
    	\end{equation*}
    	so that, by defining
    	\begin{equation*}
    	\rho_1=\frac{32 L_C^6}{5 S_C^{7}},
    	\end{equation*}
    	one can deduce from \eqref{e12} that, for every $\rho\geq\rho_1$, it holds
    	\begin{equation*}
    	\int_{0}^{\rho_0/\rho}\left|\widehat{\mathds{1}}_{[\delta] C}(\rho\,\uthe)\right|^2\de \delta\leq\frac{S_C^2}{L_C^2}\rho^{-4}\leq \rho^{-2}\gamma_C^2(\theta,\rho^{-1}).
    	\end{equation*}
    	Finally, by combining the latter observations into \eqref{e10}, we obtain that, for every $\rho\geq\max\{\rho_0,\rho_1\}$, it holds
    	\begin{equation*}
    	\begin{split}
    	\int_{0}^{1}\left|\widehat{\mathds{1}}_{[\delta] C}(\rho\,\uthe)\right|^2d\delta&\leq\int_{0}^{\rho_0/\rho}\left|\widehat{\mathds{1}}_{[\delta] C}(\rho\,\uthe)\right|^2d\delta+\rho^{-2}\gamma_C^2(\theta,\rho^{-1})\\
    	&\leq2\rho^{-2}\gamma_C^2(\theta,\rho^{-1}).
  	    \end{split}
    	\end{equation*}

    	Let us now proceed to prove the lower bound. As before, and without loss of generality, we assume $\theta=0$, and we define $g$ as in \eqref{e11}.
    	Hence, we define $f$ by the same affine change of variable as in \eqref{e9}, so that its support is the interval $(-1,1)$. By the latter lemma, it follows that there exist positive absolute constants $\tilde{s}>1$ and $\tilde{c}>0$ such that, uniformly for every such $f$ and for every $s\geq \tilde{s}$, it holds
    	\begin{equation*}
    	s^{-2}\zeta^2_f( s^{-1})\leq \tilde{c}\int_{\kappa_1}^{ \kappa_2}\left|\widehat{f}(\delta s)\right|^2\de \delta.
    	\end{equation*}
    	By the concavity of $f$ on its support, it follows that, for every $s_1$ and $s_2$ such that $0\leq s_1<s_2\leq1$, it holds
    	\begin{equation*}
    	f(-1+s_1)\leq2f(-1+s_2)\quad\text{and}\quad f(1-s_1)\leq2f(1-s_2).
    	\end{equation*}
    	Hence, since $ \kappa_2> 1$ and $\tilde{s}>1$, then for every $s\geq \tilde{s}$ it holds
    	\begin{equation*}
    s^{-2}\zeta^2_f( \kappa_2^{-1} s^{-1})\leq  4 \tilde{c}\int_{\kappa_1}^{ \kappa_2}\left|\widehat{f}(\delta s)\right|^2\de \delta\leq \frac{4\tilde{c}}{\kappa_1^2}\int_{\kappa_1}^{\kappa_2}\delta^2 \left|\widehat{f}(\delta s)\right|^2\de \delta.
    	\end{equation*}
    	Turning into terms of $g$, and by \eqref{FourVar} and the change of variable, $\rho=2 s \kappa_2/(b-a)$, we get that, for every $\rho$ such that $\rho\geq2 \tilde{s} \kappa_2 /(b-a)$, it holds
    	\begin{equation*}
    	\kappa_2^2\rho^{-2}\zeta^2_g\left(\rho^{-1}\right)\leq \frac{4\tilde{c}}{\kappa_1^2} \int_{\kappa_1}^{ \kappa_2}\delta^2\left|\widehat{g}\left(\delta \kappa_2^{-1}\rho\right)\right|^2d\delta.
        \end{equation*}
        Independently of the choice of $\theta$, it holds $\left|b-a\right|\geq S_C$, and then we set
        \begin{equation*}
            \rho_2=2 \tilde{s} \kappa_2 /S_C.
        \end{equation*}
        Hence, by rewriting the last inequality in terms of $C$, and by the change of variable $\delta=\kappa_2\Delta$, we get that for every $\rho\geq\rho_2$ it holds
        \begin{equation*}
        \rho^{-2}\gamma_C^2(0,\rho^{-1})\leq \frac{4\tilde{c}\kappa_2}{\kappa_1^2} \int_{0}^{1}\Delta^2\left|\widehat{\mathds{1}}_C(\Delta\rho,0)\right|^2\de \Delta=\frac{4\tilde{c}\kappa_2}{\kappa_1^2} \int_{0}^{1}\left|\widehat{\mathds{1}}_{[\Delta]C}(\rho,0)\right|^2\de \Delta.
        \end{equation*}
        Last, we set $\kappa_4=\kappa_1^2/(4\tilde{c}\kappa_2)$, and the conclusion follows once we acknowledge that there exists a positive absolute constant $\kappa_3$, independent of $C$, such that it holds
        \begin{equation*}
        \max\{\rho_0,\rho_1,\rho_2\}\leq \kappa_3L_C^6/S_C^{7}.
        \end{equation*}
    \end{proof}

	\begin{rem}
		Notice that the estimates in the latter lemma are uniform for a class of planar convex bodies whose longest and shortest directional diameters are uniformly bounded.
	\end{rem}

We proceed with the proof of Proposition~\ref{t5}, which is indeed the tool that allows us to study averages over intervals of rotations.

\begin{proof}[Proof of Proposition~\ref{t5}]
	For the sake of simplicity, we omit the subscript $C$ under the geometric objects. With the help of Figure~\ref{FF3}, observe that
	\begin{equation}\label{l1}
		\left|K^+(\theta,\lambda)\right|=-\int_{s^o(\theta)}^{s^+(\theta,\lambda)}\duthe\cdot\boldsymbol{\Gamma}'(t)\de t,
	\end{equation}
	and
	\begin{equation}\label{l2}
		\int_{s^o(\theta)}^{s^+(\theta,\lambda)}\uthe\cdot\boldsymbol{\Gamma}'(t)\de t=\lambda.
	\end{equation}\begin{figure}
        \centering
        \includegraphics[width=0.9\linewidth]{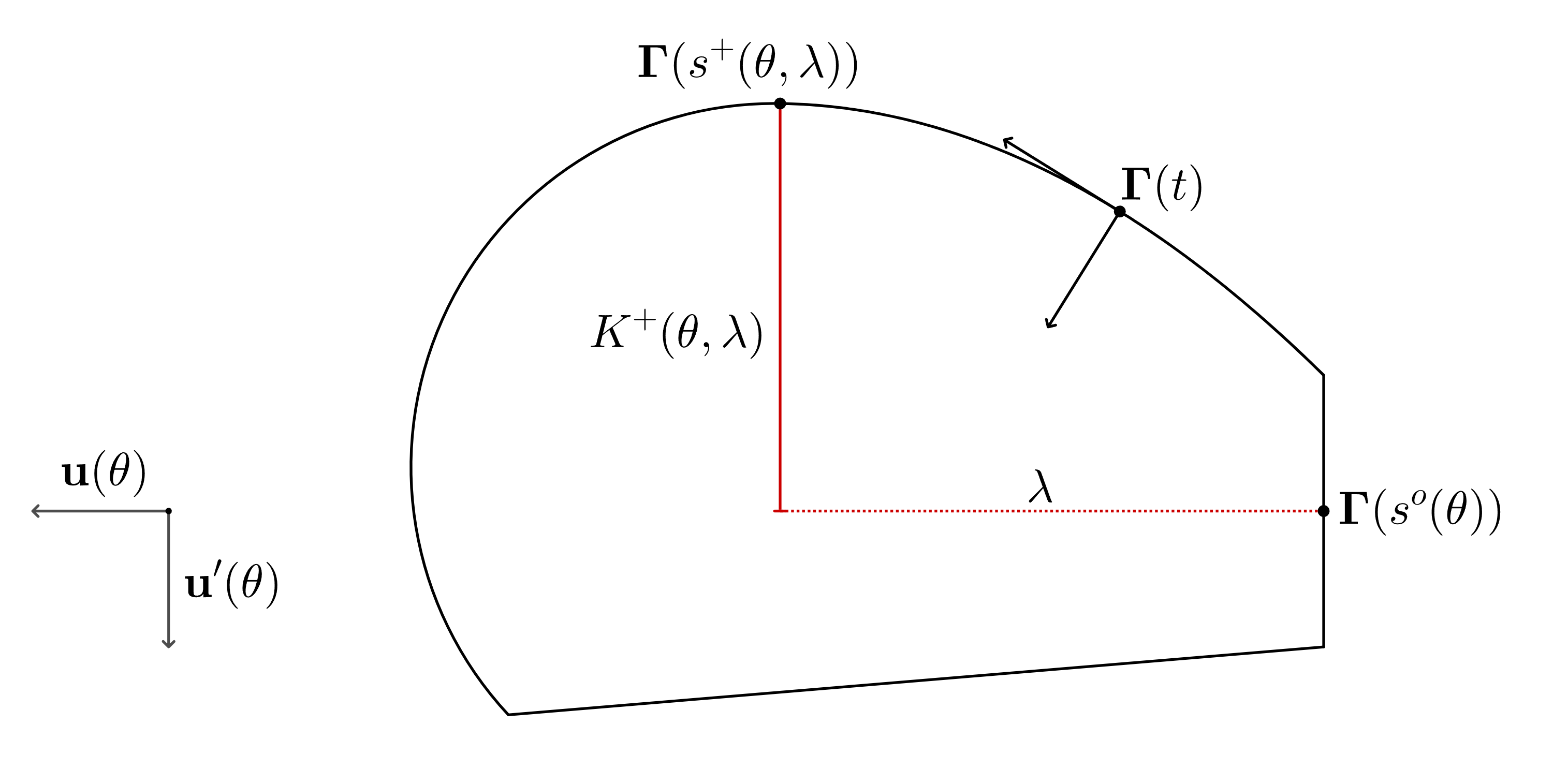}
        \caption{An auxiliary image for the proof of Proposition~\ref{t5}. For simplicity, we omit to write $C$.}
        \label{FF3}
    \end{figure}Since $C$ is a convex body, it is not difficult to deduce that the set of angular points of $C$ is at most countable. In turn, this implies that the derivatives
    \begin{equation*}
        \frac{\partial}{\partial\lambda}s^+,\quad\frac{\partial}{\partial\theta}s^+,\quad \text{and}\quad\boldsymbol{\Gamma}',\quad\text{exist almost everywhere}.
    \end{equation*}
    Hence, by taking the distributional derivative with respect to $\lambda$ of both sides of \eqref{l2}, we get
	\begin{equation}\label{l3}
		\left(\frac{\partial}{\partial\lambda}s^+(\theta,\lambda)\right)\uthe\cdot\boldsymbol{\Gamma}'(s^+(\theta,\lambda))=1.
	\end{equation}
    Also, by taking the distributional derivative with respect to $\theta$ of both sides of \eqref{l2} and by applying Leibniz integral rule, we obtain
	\begin{equation}\label{l6}
 \begin{split}
		&\left(\frac{\partial}{\partial\theta}s^+(\theta,\lambda)\right)\uthe\cdot\boldsymbol{\Gamma}'(s^+(\theta,\lambda))+\int_{s^o(\theta)}^{s^+(\theta,\lambda)}\duthe\cdot\boldsymbol{\Gamma}'(t)\de t=\\
  &=\left(\frac{\partial}{\partial\theta}s^o(\theta)\right)\uthe\cdot\boldsymbol{\Gamma}'(s^o(\theta)).
  \end{split}
	\end{equation}
    It is simple to notice that, for every $\theta\in\mathcal{T}$, it holds
    \begin{equation*}
        \frac{\partial}{\partial\theta}s^o(\theta)=0.
    \end{equation*}
    On the other hand, for every $\theta\in\mathcal{T}^\mathsf{c}$, it holds
    \begin{equation*}
        \uthe\cdot\boldsymbol{\Gamma}'(s^o(\theta))=0.
    \end{equation*}
    Therefore, for every angle $\theta\in\mathbb{T}_{2\pi}$, it holds 
	\begin{equation}\label{l7}
	\left(\frac{\partial}{\partial\theta}s^o(\theta)\right)\uthe\cdot\boldsymbol{\Gamma}'(s^o(\theta))=0.
	\end{equation}
    Hence, by \eqref{l1}, \eqref{l6}, and \eqref{l7}, it follows that
	\begin{equation}\label{l4}
		\left|K^+(\theta,\lambda)\right|=\left(\frac{\partial}{\partial\theta}s^+(\theta,\lambda)\right)\uthe\cdot\boldsymbol{\Gamma}'(s^+(\theta,\lambda)).
	\end{equation}
    Also, by taking the distributional derivative with respect to $\lambda$ of both sides of \eqref{l1}, we get
    \begin{equation}\label{l5}
    	\frac{\partial}{\partial\lambda}\left|K^+(\theta,\lambda)\right|=-\left(\frac{\partial}{\partial\lambda}s^+(\theta,\lambda)\right)\duthe\cdot\boldsymbol{\Gamma}'(s^+(\theta,\lambda)).
    \end{equation}
	Then, since we can apply the dominated convergence theorem to the integral at the left-hand side of \eqref{TeoRota}, and by \eqref{l3},\eqref{l4}, and \eqref{l5}, it follows that
	\begin{align*}
		&\frac{\partial}{\partial\lambda}\int_I\left|K^+(\theta,\lambda)\right|^2\de \theta=\\&=2\int_I\left|K^+(\theta,\lambda)\right|\left(\frac{\partial}{\partial\lambda}\left|K^+(\theta,\lambda)\right|\right)\de \theta\\
		&=-2\int_I\left(\frac{\partial}{\partial\theta}s^+(\theta,\lambda)\right)\uthe\cdot\boldsymbol{\Gamma}'(s^+(\theta,\lambda))\left(\frac{\partial}{\partial\lambda}s^+(\theta,\lambda)\right)\duthe\cdot\boldsymbol{\Gamma}'(s^+(\theta,\lambda))\de \theta\\
		&=-2\int_I\left(\frac{\partial}{\partial\theta}s^+(\theta,\lambda)\right)\duthe\cdot\boldsymbol{\Gamma}'(s^+(\theta,\lambda))\de \theta.
	\end{align*}
    Now, notice that $s^{o^+}(\theta)=s^o(\theta)$ if and only if
    \begin{equation*}
        \left\{ \mathbf{b}\in C\,\colon\, \mathbf{b}\cdot\uthe=\min_{\mathbf{a}\in C}\mathbf{a}\cdot\uthe\right\}\quad\text{is a single point.}
    \end{equation*}
    Also, it is not difficult to see that, uniformly in $\theta\in\mathbb{T}_{2\pi}$, it holds
	\begin{equation*}
	\lim_{\lambda\to0}\boldsymbol{\Gamma}'(s^+(\theta,\lambda))=-\mathbf{u}'(\nu^+(s^{o^+}(\theta))),
	\end{equation*}
	and by the compactness of $\mathbb{T}_{2\pi}$, this in turn implies that for every small $\varepsilon>0$ there exists $\lambda_\varepsilon>0$ such that, for every $\lambda\in\mathbb{R}$ such that $0<\lambda\leq\lambda_\varepsilon$, and uniformly for every angle $\theta\in\mathbb{T}_{2\pi}$, it holds
	\begin{equation*}
	\left|\boldsymbol{\Gamma}'(s^+(\theta,\lambda))+\mathbf{u}'(\nu^+(s^{o^+}(\theta)))\right|<\varepsilon.
	\end{equation*}
    Now, consider the set
    \begin{equation*}
        E_\varepsilon=\left\{\theta\in I\,\colon\,\eta_{2\pi}\!\left(\theta,\nu^+(s^{o^+}(\theta))\right)\geq\varepsilon\right\},
    \end{equation*}
    and let $\left[\alpha_j,\beta_j\right]$ be one of its connected components; in particular, notice that these are at most $2\pi/\varepsilon$. By the fact that $s^o(\alpha_j)=s^o(\beta_j)$, and by some basic geometry, we get that
	\begin{equation*}
	\begin{split}
	\int_{\alpha_j}^{\beta_j}\frac{\partial}{\partial\theta}s^+(\theta,\lambda)\de \theta&=\eta_{|\partial C|}\!\left(s^+(\alpha_j,\lambda),s^+(\beta_j,\lambda)\right)\\
	&\leq\eta_{|\partial C|}\!\left(s^o(\beta_j),s^+(\beta_j,\lambda)\right)\leq\frac{\lambda}{\tan(\varepsilon)}=\lambda\,\mathcal{O}(\varepsilon^{-1}).
    \end{split}
	\end{equation*}
	Moreover, notice that, for every $\theta\in I\setminus E_\varepsilon$, it holds
	\begin{equation*}
	\duthe\cdot\mathbf{u}'(\nu^+(s^{o^+}(\theta)))=\cos(\theta-\nu^+(s^{o^+}(\theta)))\leq\cos(\varepsilon)=1+\mathcal{O}(\varepsilon^2).
	\end{equation*}
    By the latter observations, for every $\lambda$ such that $0<\lambda\leq\lambda_\varepsilon$, it follows that
	\begin{align*}
		&-\int_I\left(\frac{\partial}{\partial\theta}s^+(\theta,\lambda)\right)\duthe\cdot\boldsymbol{\Gamma}'(s^+(\theta,\lambda))\de \theta=\\
		&=\int_I\left(\frac{\partial}{\partial\theta}s^+(\theta,\lambda)\right)\duthe\cdot\mathbf{u}'(\nu^+(s^o(\theta)))\de \theta+\mathcal{O}(\varepsilon)\\
	   	&=\int_{I\setminus E_\varepsilon}\left(\frac{\partial}{\partial\theta}s^+(\theta,\lambda)\right)\duthe\cdot\mathbf{u}'(\nu^+(s^o(\theta)))\de \theta + \lambda\,\mathcal{O}(\varepsilon^{-2})+\mathcal{O}(\varepsilon)\\
    	&=\int_{I\setminus E_\varepsilon}\frac{\partial}{\partial\theta}s^+(\theta,\lambda)\de \theta + \lambda\,\mathcal{O}(\varepsilon^{-2})+ \mathcal{O}(\varepsilon)\\
		&=\int_{I}\frac{\partial}{\partial\theta}s^+(\theta,\lambda)\de \theta + \lambda\,\mathcal{O}(\varepsilon^{-2}) + \mathcal{O}(\varepsilon)\\
        &=\eta_{|\partial C|}\!\left(s^+(\beta,\lambda),s^+(\alpha,\lambda)\right) + \lambda\,\mathcal{O}(\varepsilon^{-2})+ \mathcal{O}(\varepsilon).
	\end{align*}
	Finally, we notice that
	\begin{equation*}
	\lim_{\lambda\to0}\eta_{|\partial C|}\!\left(s^+(\beta,\lambda),s^+(\alpha,\lambda)\right)=P_C((\alpha,\beta]),
	\end{equation*}
	and therefore, by choosing $\lambda=\min\left(\lambda_\varepsilon,\varepsilon^3\right)$ and letting $\varepsilon\to0$, we get that
 \begin{equation*}
     \lim_{\lambda\to 0}\frac{\partial}{\partial\lambda}\int_I\left|K^+(\theta,\lambda)\right|^2\de \theta= 2 P_C((\alpha,\beta]).
 \end{equation*}
 Last, the claim follows at once by applying L'Hospital's rule.
\end{proof}

By an analogous proof, the same result for $K_C^-$ and right semi-open intervals $I=[a,b)$ holds. As for entire chords $K_C$, by the fact that for every $a,b\geq0$ it holds
\begin{equation*}
    \frac{a^2+b^2}{2}\leq\max\left(a^2,b^2\right)\leq a^2+b^2\quad\text{and}\quad a^2+b^2\leq(a+b)^2\leq2a^2+2b^2,
\end{equation*}
it easily follows a handy result.
\begin{cor}\label{c2}
		Let $C\subset \mathbb{R}^2$ be a convex body, and let $I\subset\mathbb{T}_{2\pi}$ be a closed interval. It holds
	\begin{equation*}
 \liminf_{\rho\to+\infty}\rho\int_{I}\gamma^2_C(\theta,\rho^{-1})\de \theta\geq P_C\left(I\right)+P_C\left(I+\pi\right)
    \end{equation*}
    and
    \begin{equation*}
\limsup_{\rho\to+\infty}\rho\int_{I}\gamma^2_C(\theta,\rho^{-1})\de \theta\leq 8 P_C\left(I\right)+8P_C\left(I+\pi\right).
	\end{equation*}
\end{cor}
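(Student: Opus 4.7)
The plan is to reduce the corollary to Proposition~\ref{t5} by means of the two pairs of elementary inequalities recorded just above the statement. By Definition~\ref{Corde2}, the semi-chords $K_C^+(\theta,\lambda)$ and $K_C^-(\theta,\lambda)$ cover $K_C(\theta,\lambda)$ and share only the midpoint $\boldsymbol{\Gamma}_C(s_C^o(\theta))$, so $|K_C(\theta,\lambda)|=|K_C^+(\theta,\lambda)|+|K_C^-(\theta,\lambda)|$. Applying $a^2+b^2\leq(a+b)^2\leq 2(a^2+b^2)$ pointwise in $\theta$ gives
\begin{equation*}
|K_C^+(\theta,\lambda)|^2+|K_C^-(\theta,\lambda)|^2 \leq |K_C(\theta,\lambda)|^2 \leq 2\bigl(|K_C^+(\theta,\lambda)|^2+|K_C^-(\theta,\lambda)|^2\bigr).
\end{equation*}

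I will then apply Proposition~\ref{t5} to $K_C^+$ and the analogue for $K_C^-$ mentioned in the paragraph immediately following its proof; both give $\rho\int_I|K_C^{\pm}(\theta,\rho^{-1})|^2\de\theta\to 2P_C(I)$ as $\rho\to+\infty$. Integrating the previous display over $I$, multiplying by $\rho$, and passing to $\liminf$ and $\limsup$ yields
\begin{equation*}
4P_C(I) \leq \liminf_{\rho\to+\infty}\rho\int_I|K_C(\theta,\rho^{-1})|^2\de\theta \leq \limsup_{\rho\to+\infty}\rho\int_I|K_C(\theta,\rho^{-1})|^2\de\theta \leq 8P_C(I).
\end{equation*}

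To conclude, I will use the identity $\gamma_C(\theta,\lambda)^2=\max(|K_C(\theta,\lambda)|^2,\,|K_C(\theta+\pi,\lambda)|^2)$ together with $\tfrac{a^2+b^2}{2}\leq\max(a^2,b^2)\leq a^2+b^2$ to sandwich $\int_I\gamma_C^2\de\theta$ between $\tfrac{1}{2}\bigl(\int_I|K_C(\theta,\lambda)|^2\de\theta+\int_{I+\pi}|K_C(\theta,\lambda)|^2\de\theta\bigr)$ and twice that quantity, after the change of variable $\theta\mapsto\theta+\pi$ that rewrites the $|K_C(\theta+\pi,\lambda)|^2$ contribution as an integral over $I+\pi$. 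Substituting the asymptotics of the second paragraph into this sandwich and observing $2(P_C(I)+P_C(I+\pi))\geq P_C(I)+P_C(I+\pi)$ delivers both inequalities of the corollary.

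No step is genuinely hard; the argument is a careful bookkeeping of the factors $2$, $4$, and $8$ propagating through the two inequality pairs. The one subtlety is verifying that the mismatch between the left-/right-semi-open conventions of Proposition~\ref{t5} and the closed interval $I$ in the corollary is harmless, which follows because the boundary of $I$ is a null set in $\theta$ and the relevant limits only depend on the Lebesgue measure of $I$.
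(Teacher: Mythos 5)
Your route is the paper's route (the splitting $|K_C|=|K_C^+|+|K_C^-|$, the two pairs of elementary inequalities, Proposition~\ref{t5} and its $K_C^-$ analogue, and the change of variable $\theta\mapsto\theta+\pi$), and your upper bound goes through. But the lower bound has a genuine gap, located exactly at the point you wave away at the end. The integrals on the left-hand side are indeed insensitive to the endpoints of $I$, but the right-hand sides are not: $P_C$ is not a function of the Lebesgue measure of $I$, and adjoining an endpoint can increase $P_C$ by the full length of a flat boundary edge whose normal is exactly that endpoint angle. Proposition~\ref{t5} and its analogue therefore give $\rho\int_I|K_C^+(\theta,\rho^{-1})|^2\de\theta\to 2P_C((\alpha,\beta])$ and $\rho\int_I|K_C^-(\theta,\rho^{-1})|^2\de\theta\to 2P_C([\alpha,\beta))$, not $2P_C([\alpha,\beta])$ for both, so your intermediate claim $4P_C(I)\leq\liminf_\rho\rho\int_I|K_C(\theta,\rho^{-1})|^2\de\theta$ is false in general. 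Concretely, take $C=[0,1]^2$ and $I=[\pi/2,\pi/2+a]$ with small $a>0$: one computes $|K_C(\pi/2+\epsilon,\lambda)|=\min(1,\lambda/\sin\epsilon)/\cos\epsilon$, hence $\rho\int_I|K_C(\theta,\rho^{-1})|^2\de\theta\to 2$, while $4P_C(I)=4$ because the bottom edge has normal exactly $\pi/2$. The same computation gives $\lim_\rho\rho\int_I\gamma_C^2(\theta,\rho^{-1})\de\theta=2=P_C(I)+P_C(I+\pi)$, so the ``spare factor $2$'' you discard in the last step does not exist: the constant $1$ in the corollary is sharp for closed intervals, and any argument yielding constant $2$ must be broken.

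The fix is small and is presumably what the paper intends. Apply Proposition~\ref{t5} over $(\alpha,\beta]$ for $K_C^+$ and its analogue over $[\alpha,\beta)$ for $K_C^-$; the left-hand integrals equal those over the closed $I$ (this is the only place where your null-set remark is legitimately used). Then observe that any $s$ with $\nu_C(s)\cap[\alpha,\beta]\neq\varnothing$ has $\nu_C(s)\cap(\alpha,\beta]\neq\varnothing$ or $\alpha\in\nu_C(s)$, so that $P_C((\alpha,\beta])+P_C([\alpha,\beta))\geq P_C([\alpha,\beta])$, while trivially both semi-open quantities are at most $P_C([\alpha,\beta])$. Feeding these into your chain (the pointwise inequality $|K_C^+|^2+|K_C^-|^2\leq|K_C|^2\leq 2(|K_C^+|^2+|K_C^-|^2)$, then the sandwich for $\gamma_C^2$ via $\max$, and $\theta\mapsto\theta+\pi$) gives $\liminf_\rho\rho\int_I\gamma_C^2\geq P_C(I)+P_C(I+\pi)$ and $\limsup_\rho\rho\int_I\gamma_C^2\leq 8P_C(I)+8P_C(I+\pi)$, which are exactly the stated constants.
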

Pairing the latter result with Lemma~\ref{Pod1} and Lemma~\ref{t3}, we immediately get Lemma~\ref{LC} and Lemma~\ref{CN}. We also retrieve the following.
\begin{lem}\label{lem2}
	Let $C\subset\mathbb{R}^2$ be a convex body, and let $I\subseteq\mathbb{T}_{2\pi}$ be an interval of angles such that $\psi_C<|I|\leq2\pi$. Uniformly for every $\omega\in\mathbb{T}_{2\pi}$, it holds
	\begin{equation*}
	\int_I\int_{0}^1\left|\widehat{\mathds{1}}_{[\delta, \theta] C}(\rho\,{\mathbf{u}(\omega)})\right|^2\de \delta\de \theta\asymp\rho^{-3}.
	\end{equation*}
\end{lem}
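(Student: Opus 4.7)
My plan is to reduce the integral in the statement to a form already handled by Lemma~\ref{CN} and then verify that the resulting geometric prefactor is bounded both above and away from $0$ uniformly in $\omega$.

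First, by the Fourier transform property~\eqref{FourierProp} one has
\[
\bigl|\widehat{\mathds{1}}_{[\delta,\theta]C}(\rho\,\mathbf{u}(\omega))\bigr|=\bigl|\widehat{\mathds{1}}_{[\delta]C}(\rho\,\mathbf{u}(\omega-\theta))\bigr|,
\]
and applying the change of variables $\theta\mapsto\omega-\theta$ then rewrites the integral in the statement as
\[
\int_{\omega-I}\int_0^1\bigl|\widehat{\mathds{1}}_{[\delta]C}(\rho\,\mathbf{u}(\theta))\bigr|^2\de\delta\de\theta.
\]
Since $\omega-I$ is a closed interval of length $|I|$, Lemma~\ref{CN} applied to it gives the two-sided bound
\[
\int_{\omega-I}\int_0^1\bigl|\widehat{\mathds{1}}_{[\delta]C}(\rho\,\mathbf{u}(\theta))\bigr|^2\de\delta\de\theta\asymp\rho^{-3}\bigl(P_C(\omega-I)+P_C(\omega-I+\pi)\bigr).
\]
It thus suffices to show that $\Phi(I'):=P_C(I')+P_C(I'+\pi)\asymp 1$, uniformly over intervals $I'\subseteq\mathbb{T}_{2\pi}$ of length $|I|$.

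The upper bound $\Phi(I')\leq 2|\partial C|$ is immediate. For the lower bound, suppose by contradiction that $\Phi(I')=0$. Then no non-angular boundary point of $C$ has its normal direction in $I'\cup(I'+\pi)$. Since the range of normals at non-angular points is $\mathbb{T}_{2\pi}\setminus\mathcal{T}_C$ in a measure-theoretic sense, this forces both $I'\subseteq\mathcal{T}_C$ and $I'+\pi\subseteq\mathcal{T}_C$, i.e.\ $I'\subseteq\mathcal{T}_C\cap(\mathcal{T}_C+\pi)$. But $I'$ is a connected interval of length $|I|>\psi_C$, contradicting the definition of $\psi_C$ as the maximum length of a connected component of $\mathcal{T}_C\cap(\mathcal{T}_C+\pi)$. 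Hence $\Phi(I')>0$ at every interval $I'$ of length $|I|$.

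To upgrade pointwise positivity to uniform positivity in the position of $I'$, I would use a compactness argument: the collection of intervals of fixed length $|I|$ is compact, being parameterised by the left endpoint on the torus. Moreover, the function $\alpha\mapsto P_C((\alpha,\alpha+|I|))+P_C((\alpha+\pi,\alpha+\pi+|I|))$, restricted to \emph{open} intervals, is lower semi-continuous, since any compact subinterval of the limiting open interval is eventually contained in the approximating ones. A minimising sequence $\alpha_n\to\alpha^*$ is therefore bounded below by the value at $\alpha^*$, which is strictly positive by the previous argument applied to open intervals. The principal obstacle in this last step is that $\Phi$ need not be continuous: flat segments of $\partial C$ produce atoms in the normal push-forward measure, which jump in or out of $I'$ as its endpoints move; passing to open intervals is precisely what preserves the semi-continuity needed for the compactness argument.
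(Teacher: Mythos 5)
Your proof is correct and follows essentially the same route as the paper: reduce to Lemma~\ref{CN} (equivalently, Lemma~\ref{t3} plus Corollary~\ref{c2}), observe that $P_C(I')+P_C(I'+\pi)=0$ would force $I'\subseteq\mathcal{T}_C\cap(\mathcal{T}_C+\pi)$ and hence $\psi_C\geq|I|$, and then use compactness of $\mathbb{T}_{2\pi}$ to make the lower bound uniform in $\omega$. In fact you are slightly more careful than the paper at the last step, since you explicitly address the failure of continuity of $P_C$ caused by flat edges and restore lower semi-continuity by passing to open intervals, a point the paper's compactness argument leaves implicit.
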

\begin{proof}
First, we prove that there exists a positive value $c$ such that for every $\omega\in\mathbb{T}_{2\pi}$ it holds
\begin{equation*}
    \liminf_{\rho\to+\infty}\rho\int_{\omega+I}\gamma^2_C(\theta,\rho^{-1})\de \theta\geq c.
\end{equation*}
If this were not the case, then, by the latter corollary, we would have a sequence of $\{\omega_j\}_{j\in\mathbb{N}}\subset\mathbb{T}_{2\pi}$ such that
\begin{equation*}
    \lim_{j\to+\infty}\left(P_C(\omega_j+I)+P_C(\omega_j+I+\pi)\right)=0.
\end{equation*}
Hence, by the compactness of $\mathbb{T}_{2\pi}$, we would get the existence of a $\tilde{\omega}\in\mathbb{T}_{2\pi}$ such that
\begin{equation*}
    P_C(\tilde{\omega}+I)=0=P_C(\tilde{\omega}+I+\pi),
\end{equation*}
but this is a contradiction since it implies that
\begin{equation*}
    (\tilde{\omega}+I)\cup(\tilde{\omega}+I+\pi)\subset\mathcal{T}_C,
\end{equation*}
and consequently, it would hold $\psi_C\geq |I|$.\par
Finally, by Lemma~\ref{t3} and by the compactness of $\mathbb{T}_{2\pi}$, it follows that, uniformly for every $\omega\in\mathbb{T}_{2\pi}$, it holds
	\begin{equation*}
	\int_I\int_{0}^1\left|\widehat{\mathds{1}}_{[\delta] C}(\rho\,\mathbf{u}(\omega-\theta))\right|^2\de \delta\de \theta\asymp\rho^{-2}\int_{\omega+I}\gamma_C^2(\theta,\rho^{-1})\de \theta\asymp\rho^{-3}.
	\end{equation*}
\end{proof}

\section{Proofs: Discrepancy over Affine Transformations}\label{S3}

We show a classical technical result on estimating exponential sums from below. We also point out that an analogous result holds on manifolds, as recently presented in \cite{MR4238281} and \cite{MR4480211}. Last, the argument in the proof of the following lemma is due to Siegel \cite{MR1555407}.
\begin{lem}[Cassels-Montgomery]\label{C-M}
	Let $U\subset\mathbb{R}^2$ be a neighbourhood of the origin. There exists a positive value $c_U$ such that, for every origin-symmetric convex body $\Omega\subset\mathbb{R}^2$ and for every finite set of points $\{\mathbf{p}_j\}_{j=1}^N\subset\mathbb{T}^2$, it holds
	\begin{equation*}
	\sum_{\mathbf{m}\in(\Omega\setminus U)\cap\mathbb{Z}^2}\left|\sum_{j=1}^{N}e^{2\pi i \mathbf{m}\cdot \mathbf{p}_j}\right|^2\geq\frac{|\Omega|}{4}N-c_U N^2.
	\end{equation*}
\end{lem}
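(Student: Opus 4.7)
The plan is to execute Siegel's auxiliary-function strategy: construct a non-negative, compactly supported test function $\psi$ with $\widehat{\psi}\geq 0$, tailored so that a constant multiple of $\psi$ minorizes $\mathbf{1}_{\Omega}$ on the integer lattice, and then extract a diagonal lower bound via Poisson summation. Concretely, set $\phi=\mathbf{1}_{\frac{1}{2}\Omega}$ and $\psi=\phi\ast\phi$. Origin-symmetry and convexity of $\Omega$ give $\tfrac{1}{2}\Omega+\tfrac{1}{2}\Omega=\Omega$, so $\psi$ is continuous and supported in $\Omega$; moreover $\psi(\mathbf{0})=\|\phi\|_2^2=|\Omega|/4$, while Cauchy--Schwarz yields $\psi(\mathbf{x})\leq\psi(\mathbf{0})$ for every $\mathbf{x}$. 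The crucial property is $\widehat{\psi}=(\widehat{\phi})^2\geq 0$, which holds because $\phi$ is even (equivalently, because $\Omega$ is origin-symmetric), so that $\widehat{\phi}$ is real-valued.

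Writing $T(\mathbf{m})=\sum_{j=1}^{N} e^{2\pi i\mathbf{m}\cdot\mathbf{p}_j}$, the pointwise bound $\tfrac{4}{|\Omega|}\psi\leq\mathbf{1}_{\Omega}$ on $\mathbb{Z}^2$ together with $|T(\mathbf{m})|\leq N$ gives
\begin{equation*}
\sum_{\mathbf{m}\in(\Omega\setminus U)\cap\mathbb{Z}^2}|T(\mathbf{m})|^2\geq\frac{4}{|\Omega|}\sum_{\mathbf{m}\in\mathbb{Z}^2}\psi(\mathbf{m})|T(\mathbf{m})|^2-\#(U\cap\mathbb{Z}^2)\,N^2,
\end{equation*}
so with $c_U:=\#(U\cap\mathbb{Z}^2)$ the second term already provides the desired error. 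For the main term, expanding $|T(\mathbf{m})|^2=\sum_{j,k} e^{2\pi i\mathbf{m}\cdot(\mathbf{p}_j-\mathbf{p}_k)}$ and applying Poisson summation,
\begin{equation*}
\sum_{\mathbf{m}\in\mathbb{Z}^2}\psi(\mathbf{m})|T(\mathbf{m})|^2=\sum_{j,k=1}^{N}\sum_{\mathbf{n}\in\mathbb{Z}^2}\widehat{\psi}\bigl(\mathbf{n}-(\mathbf{p}_j-\mathbf{p}_k)\bigr).
\end{equation*}
Non-negativity of $\widehat{\psi}$ makes every summand on the right non-negative, so restricting to $j=k$ and $\mathbf{n}=\mathbf{0}$ yields a lower bound of $N\,\widehat{\psi}(\mathbf{0})=N\|\phi\|_1^2=N|\Omega|^2/16$. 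Multiplying by $4/|\Omega|$ produces the main term $N|\Omega|/4$, and the claim follows.

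The main obstacle is the rigour of the Poisson step, since $\psi$ is only Lipschitz and $\widehat{\psi}$ may decay only like $|\boldsymbol{\xi}|^{-2}$ in certain directions (e.g.\ when $\Omega$ is polygonal), so that $\sum_{\mathbf{n}}\widehat{\psi}(\mathbf{n})$ need not be absolutely convergent. I would bypass this by replacing $\phi$ with a smooth even approximation $\phi_\epsilon\in C^\infty_c$ supported in the slight shrinking $(1-\epsilon)\tfrac{1}{2}\Omega$, with $\|\phi_\epsilon\|_1,\|\phi_\epsilon\|_2^2\to|\Omega|/4$ as $\epsilon\to 0$. For the mollified $\psi_\epsilon=\phi_\epsilon\ast\phi_\epsilon$ Poisson summation is immediate because $\widehat{\psi_\epsilon}$ is Schwartz, and sending $\epsilon\to 0$ recovers the sharp constant $|\Omega|/4$ in the main term.
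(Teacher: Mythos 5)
Your argument is correct, and it implements Siegel's idea along a genuinely different route from the paper. The paper stays entirely discrete: it averages the lattice-point count of the translates $\mathbf{x}+\Omega/2$ over $\mathbf{x}\in\mathbb{T}^2$ to find a translate containing at least $|\Omega|/4$ lattice points, builds from those points a Fej\'er-type non-negative trigonometric polynomial whose Fourier coefficients lie in $[0,1]$ and are supported in $\Omega$, and then every manipulation is a finite sum, so no Poisson summation or convergence question ever arises. You instead place the weight on the physical side, taking the continuous autocorrelation $\psi=\mathds{1}_{\Omega/2}\ast\mathds{1}_{\Omega/2}$, and transfer positivity to the Fourier side via Poisson summation; the roles of the two sides are essentially swapped relative to the paper. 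Your version produces the constant $|\Omega|/4$ directly from $\|\mathds{1}_{\Omega/2}\|_{L^1}^2/\|\mathds{1}_{\Omega/2}\|_{L^2}^2$ without the averaging-over-translates step, but at the cost of justifying Poisson summation for a function that is merely Lipschitz. You correctly flag this as the one delicate point, and your mollification closes it (care is needed that $\phi_\epsilon$ stays even and bounded by $1$ so that the normalized $\psi_\epsilon$ still minorizes $\mathds{1}_\Omega$ on $\mathbb{Z}^2$); alternatively, one can invoke the classical fact that a continuous periodic function with non-negative Fourier coefficients has an absolutely convergent Fourier series, applied to the periodization of $\psi$. Both proofs are valid; the paper's buys complete freedom from analytic issues, yours buys a more direct appearance of the sharp constant.
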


\begin{proof}
	Consider the auxiliary sets $A_\Omega(\mathbf{x})\subset\mathbb{Z}^2$ defined by
	\begin{equation*}
	A_\Omega(\mathbf{x})=\left(\mathbf{x}+\Omega/2\right)\cap \mathbb{Z}^2.
	\end{equation*}
	Notice that
	\begin{equation*}
	\int_{\mathbb{T}^2}\#A_\Omega(\mathbf{x})\de \mathbf{x}=\int_{\mathbb{T}^2}\sum_{\mathbf{n}\in\mathbb{Z}^2}\mathds{1}_{\Omega/2}(\mathbf{n}-\mathbf{x})\de \mathbf{x}=\int_{\mathbb{R}^2}\mathds{1}_{\Omega/2}(\mathbf{x})\de \mathbf{x}=\frac{|\Omega|}{4},
	\end{equation*}
	and therefore, we can individuate a point 
 \begin{equation*}
     \mathbf{x}_*\in[0,1)^2\quad\text{such that}\quad\#A_\Omega(\mathbf{x}_*)\geq\frac{|\Omega|}{4}.
 \end{equation*}
 Hence, consider the non-negative trigonometric polynomial
	\begin{equation*}
	T(\mathbf{y})=\frac{1}{\#A_\Omega(\mathbf{x}_*)}\left|\sum_{\mathbf{n}\in A_\Omega(\mathbf{x}_*)}e^{2\pi i \mathbf{n}\cdot \mathbf{y}}\right|^2=\frac{1}{\#A_\Omega(\mathbf{x}_*)}\sum_{\mathbf{n},\mathbf{m}\in A_\Omega(\mathbf{x}_*)}e^{2\pi i(\mathbf{n}-\mathbf{m})\cdot \mathbf{y}},
	\end{equation*}
	and notice that the function of its Fourier coefficients $\widehat T\colon\mathbb{Z}^2\to\mathbb{R}$ is non-negative as well and, since $\mathbf{n},\mathbf{m}\in A_\Omega(\mathbf{x}_*)$ imply $(\mathbf{n}-\mathbf{m})\in\Omega$, then its support is contained in $\Omega$. Further, observe that we have
	\begin{equation*}
	T(0)=\#A_\Omega(\mathbf{x}_*)\geq \frac{|\Omega|}{4}.
	\end{equation*}
 Since for every $\mathbf{n}\in\mathbb{Z}^2$ it holds
	\begin{equation*}
	0\leq\widehat T(\mathbf{n})\leq\widehat{T}(\mathbf{0})=\int_{\mathbb{T}^2}T(\mathbf{x})\de \mathbf{x}=1,
	\end{equation*}
	then it follows that
	\begin{equation*}
	\begin{split}
	\sum_{\mathbf{n}\in\Omega\cap\mathbb{Z}^2}\left|\sum_{j=1}^{N}e^{2\pi i \mathbf{n}\cdot \mathbf{p}_j}\right|^2&\geq \sum_{\mathbf{n}\in\Omega\cap\mathbb{Z}^2}\widehat{T}(\mathbf{n})\left|\sum_{j=1}^{N}e^{2\pi i \mathbf{n}\cdot \mathbf{p}_j}\right|^2\\
	&=\sum_{j=1}^{N}\sum_{\ell=1}^{N}\sum_{\mathbf{n}\in\Omega\cap\mathbb{Z}^2}\widehat{T}(\mathbf{n})\,e^{2\pi i \mathbf{n}\cdot(\mathbf{p}_j-\mathbf{p}_\ell)}\\
	&=\sum_{j=1}^{N}\sum_{\ell=1}^{N}T(\mathbf{p}_j-\mathbf{p}_\ell)\geq N\frac{|\Omega|}{4}.
	\end{split}
	\end{equation*}
	Last, we get
	\begin{equation*}
	\begin{split}
		\sum_{\mathbf{n}\in\left(\Omega\setminus U\in\mathbb{Z}^2\right)}\left|\sum_{j=1}^{N}e^{2\pi i \mathbf{n} \cdot \mathbf{p}_j}\right|^2&=\sum_{\mathbf{n}\in\Omega\cap\mathbb{Z}^2}\left|\sum_{j=1}^{N}e^{2\pi i \mathbf{n} \cdot \mathbf{p}_j}\right|^2-\sum_{\mathbf{n}\in U\cap\mathbb{Z}^2}\left|\sum_{j=1}^{N}e^{2\pi i \mathbf{n} \cdot \mathbf{p}_j}\right|^2\\
		&\geq N\frac{|\Omega|}{4}-\sum_{\mathbf{n}\in U\cap\mathbb{Z}^2}N^2=  N\frac{|\Omega|}{4}-c_U N^2.
	\end{split}
	\end{equation*}
\end{proof}

We now prove a general result that allows us to obtain lower bounds for the quadratic discrepancy. The original argument is in \cite{MR4358540}, and we present an integral version of the proof. As further notation, we consider the argument function
\begin{equation*}
    \arg\colon\mathbb{R}^2\setminus\{\mathbf{0}\}\to\left(-\frac{\pi}{2},\frac{\pi}{2}\right]\quad\text{defined as}\quad\arg (x_1,x_2)=\arctan\frac{x_2}{x_1}.
\end{equation*}
\begin{thm}\label{t4}
	Let $C\subset\mathbb{R}^2$ be a convex body. Let $\Xi$ be a generic set of transformations of $C$ and let $h\in[0,1]$. If there exist an interval of angles $I\subset\mathbb{T}_{2\pi}$ and values $\tilde{\rho},\tilde{c}>0$ such that for every $\rho\geq\tilde{\rho}$ it holds
	\begin{equation*}
	\int_\Xi\left|\widehat{\mathds{1}}_{[\xi] C}(\rho\,{\mathbf{u}(\omega)})\right|^2\de \xi\geq \tilde{c} \begin{cases}
		\rho^{-3} &\text{if } \omega\in I\cup(I+\pi)\\
		\rho^{-3-h} &\text{else}
	\end{cases},
	\end{equation*}
	then it holds
	\begin{equation*}
	\inf_{\#\mathcal{P}_N=N}\int_\Xi\int_{\mathbb{T}^2}\left|\mathcal{D}(\mathcal{P}_N,\, [\boldsymbol{\tau},\xi] C)\right|^2\de \boldsymbol{\tau}\de\xi\succcurlyeq
	N^{\frac{2}{4+h}}.
	\end{equation*}
\end{thm}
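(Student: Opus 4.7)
The plan is to combine the Fourier expansion of the quadratic discrepancy with the Cassels--Montgomery lower bound on exponential sums (Lemma~\ref{C-M}), applied to a suitable origin-symmetric convex body $\Omega$ tailored to the interval $I$.

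First I would start from the identity derived at the beginning of Section~\ref{S2}: writing $S(\mathbf{n})=\sum_{\mathbf{p}\in\mathcal{P}_N}e^{2\pi i\mathbf{p}\cdot\mathbf{n}}$ and $w(\mathbf{n})=\int_\Xi|\widehat{\mathds{1}}_{[\xi]C}(\mathbf{n})|^2\de\xi$, Parseval and Fubini give
\begin{equation*}
\int_\Xi\int_{\mathbb{T}^2}\left|\mathcal{D}(\mathcal{P}_N,[\boldsymbol{\tau},\xi]C)\right|^2\de\boldsymbol{\tau}\de\xi=\sum_{\mathbf{n}\in\mathbb{Z}^2_*}w(\mathbf{n})|S(\mathbf{n})|^2.
\end{equation*}
The hypothesis then gives a pointwise lower bound on the weight: $w(\mathbf{n})\geq\tilde{c}|\mathbf{n}|^{-3}$ whenever $\arg\mathbf{n}\in I\cup(I+\pi)$ and $w(\mathbf{n})\geq\tilde{c}|\mathbf{n}|^{-3-h}$ otherwise, provided $|\mathbf{n}|\geq\tilde{\rho}$. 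The finitely many lattice points with $|\mathbf{n}|<\tilde{\rho}$ contribute a bounded amount which can be absorbed into constants.

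Next I would choose $\Omega$ to be an origin-symmetric convex body of ellipse (or parallelogram) type, with one semi-axis $R_1$ aligned along a direction in $I$ and the other $R_2$ in the perpendicular direction, with $R_1\geq R_2$. The key geometric observation is that lattice points in $\Omega$ whose argument falls outside $I\cup(I+\pi)$ must have $|\mathbf{n}|\lesssim R_2$ (forced by the narrow perpendicular cross-section), while good-direction lattice points can reach up to $|\mathbf{n}|\lesssim R_1$. Applying Lemma~\ref{C-M} to $\Omega$ yields
\begin{equation*}
\sum_{\mathbf{n}\in(\Omega\setminus U)\cap\mathbb{Z}^2}|S(\mathbf{n})|^2\geq\frac{|\Omega|}{4}N-c_UN^2\asymp R_1R_2N,
\end{equation*}
provided $R_1R_2\succcurlyeq N$. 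Coupled with the pointwise weight bounds and optimising $R_1,R_2$ subject to the area constraint $R_1R_2\asymp N$, this should produce the claimed lower bound of order $N^{2/(4+h)}$.

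The hard part will be the careful balancing of $R_1$ and $R_2$: a naive use of the minimum of $w$ over $\Omega$ together with Cassels--Montgomery does not immediately deliver the sharp exponent. Following the argument of \cite{MR4358540}, one splits the Parseval sum over $\Omega$ into its good-direction and bad-direction pieces, controls the bad-direction contribution through the trivial pointwise estimate $|S(\mathbf{n})|^2\leq N^2$ on the relatively few bad-direction lattice points available in the narrow $\Omega$, and absorbs the resulting cross-terms. Tracking the error constants (in particular the $c_UN^2$ term from Lemma~\ref{C-M}) through the optimisation is delicate but routine, and uniformity in $C$ is not required since the statement allows the implicit constant to depend on $I$, $\tilde{\rho}$, $\tilde{c}$, and $h$.
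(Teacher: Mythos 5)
Your skeleton (Parseval, the weight $w(\mathbf{n})=\int_\Xi|\widehat{\mathds{1}}_{[\xi]C}(\mathbf{n})|^2\de\xi$, Cassels--Montgomery) matches the paper's, but the step you defer as ``delicate but routine'' is exactly where your route breaks, and it is the actual content of the theorem. With a single narrow body $\Omega$ you must take $|\Omega|\asymp R_1R_2\asymp N$ so that the main term $|\Omega|N/4$ of Lemma~\ref{C-M} beats $c_UN^2$; but then that main term is only $\asymp N^2$, while the bad-direction lattice points of $\Omega$ lie in a box of side $\asymp R_2$ (up to constants depending on $I$), so there are $\asymp R_2^2$ of them and each may carry $|S(\mathbf{n})|^2=N^2$. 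Subtracting their worst-case contribution $\asymp R_2^2N^2$ from $\asymp N^2$ leaves nothing unless $R_2=O(1)$, which forces $R_1\asymp N$ and yields only $R_1^{-3}N^2\asymp N^{-1}$. You also cannot run Cassels--Montgomery on the good sector alone, since its proof needs the Fej\'er-type kernel supported in the full origin-symmetric body; and the loss is genuine: for $\mathcal{P}_N=\{(j/N,\tfrac12)\}_{j=0}^{N-1}$ the mass $|S|^2=N^2$ sits on $\{n_1\equiv 0\ (\mathrm{mod}\ N)\}$, so if $I$ is a small interval about the horizontal and $R_1\ll N$, all of the spectral mass inside $\Omega$ lies on the (bad) vertical points near the origin and the good part is empty --- any argument discarding the bad points retains nothing. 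The fallback you rightly reject (minimum of the weight over $\Omega$) gives $N^{(3-h)/(6+h)}$, which equals $N^{1/2}$ at $h=0$ but is strictly below $N^{2/(4+h)}$ for every $h>0$ (e.g.\ $N^{2/7}$ versus $N^{2/5}$ at $h=1$), so no choice of $R_1,R_2$ rescues the single-body scheme.

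The missing idea in the paper is to replace the single body by an \emph{average over rotations} of a thin rectangle. One takes $R$ with half-sides $X\asymp N^{(2+h)/(4+h)}$ and $Y\asymp N^{2/(4+h)}$, $4XY=\kappa N$, and uses the minorant $Z\Phi(\mathbf{m})$ with $\Phi(\mathbf{m})=\int_I\mathds{1}_{[\theta]R}(\mathbf{m})\de\theta$. Since $\Phi(\mathbf{m})\leq \pi Y/|\mathbf{m}|$ for $|\mathbf{m}|\geq Y$, $\Phi(\mathbf{m})\leq\pi$ for $|\mathbf{m}|\leq Y$, and $\Phi$ essentially vanishes off the good sector beyond radius $Y$, the choice $Z\asymp\min\left(Y^{-1}X^{-2},\,Y^{-3-h}\right)$ makes $Z\Phi$ a pointwise minorant of the hypothesis bound on $w$: the rotational averaging is what produces a minorant decaying like $|\mathbf{m}|^{-1}$ in the good directions, instead of the lossy constant $\min_\Omega w$. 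Exchanging the sum with the $\theta$-integral and applying Lemma~\ref{C-M} to each rotated copy $[\theta]R$ (with $U$ a ball of radius $\tilde{\rho}$ and $\kappa$ chosen to dominate $c_{\tilde{\rho}}$) gives $\succcurlyeq Z\,|I|\,N^2\asymp N^{2/(4+h)}$. Your good/bad splitting of a single $\Omega$ cannot be repaired into this; the integration over $\theta\in I$ is essential.
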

\begin{proof}
    In what follows, we will make some reasonable assumptions so as not to get into tedious (but basic) geometric details and to better convey the ideas of the proof.\par
    Let $N\in\mathbb{N}\setminus\{0\}$. Consider a rectangle $R\subset\mathbb{R}^2$ such that it is symmetric with respect to the axes and has a vertex in $(X,Y)$ where $X=X(N)$ and $Y=Y(N)$ are positive parameters of $N$ to be chosen later. As for now, we set them in such a way that
    \begin{equation*}
        |R|=4XY=\kappa N,
    \end{equation*}
    where $\kappa$ is a positive value to be chosen later. Also, we assume that $X$ is reasonably bigger than $Y$. Now, we define the function $\Phi\colon\mathbb{Z}^2\to\mathbb{R^+}$ as
	\begin{equation*}
	\Phi(\mathbf{m})=\int_I\mathds{1}_{[\theta] R}(\mathbf{m})\de \theta
	\end{equation*}
	and then aim to find a parameter $Z=Z(N)$ such that for every $\mathbf{m}\in \mathbb{Z}^2$ it holds
	\begin{equation*}
	Z\Phi(\mathbf{m})\leq \begin{cases}
		|\mathbf{m}|^{-3} &\text{if } \arg(\mathbf{m})\in I\cup(I+\pi)\\
		|\mathbf{m}|^{-3-h} &\text{else}
	\end{cases}.
	\end{equation*}
    First, we consider all $\mathbf{m}\in \mathbb{Z}^2$ such that $|\mathbf{m}|\geq Y$. By some basic geometry, we find that $\Phi(\mathbf{m})\leq2\alpha$ whereas $\alpha$ is such that $|\mathbf{m}|\sin\alpha=Y$, and therefore we obtain
    \begin{equation*}
        \Phi(\mathbf{m})\leq\pi\frac{Y}{|\mathbf{m}|}.
    \end{equation*}
     Also, for every $\mathbf{m}\in\mathbb{Z}^2$ such that $\arg(\mathbf{m})\not\in I\cup(I+\pi)$, it is reasonable to assume $\Phi(\mathbf{m})=0$. Recall that in the sector $\arg(\mathbf{m})\in I\cup(I+\pi)$ we aim for $Z\Phi(\mathbf{m})\leq|\mathbf{m}|^{-3}$. Since for every $\mathbf{m}$ such that $|\mathbf{m}|\geq X$ it is reasonable to assume $\Phi(\mathbf{m})=0$, we are therefore led to choose
     \begin{equation*}
         Z\leq \frac{1}{\pi YX^2}.
     \end{equation*}
     On the other hand, we consider all $\mathbf{m}\in \mathbb{Z}^2$ such that $|\mathbf{m}|\leq Y$. It holds the trivial estimate $\Phi(\mathbf{m})\leq|I|\leq\pi$. It is enough to aim for $Z\Phi(\mathbf{m})\leq|\mathbf{m}|^{-3-h}$, and therefore we are led to choose
     \begin{equation*}
         Z\leq \frac{1}{\pi Y^{+3+h}}.
     \end{equation*}
     Thus, the choice
     \begin{equation*}
         Z\leq\min\left(\pi^{-1} Y^{-1}X^{-2}, \pi^{-1}Y^{-3-h} \right)
     \end{equation*}
    will suit us overall. By equalizing the two terms in the minimum, while keeping in mind the constrain $4XY=\kappa N$, we finally get
	\begin{equation*}
	X=c_1 N^{\frac{2+h}{4+h}},\quad Y=c_2 N^{\frac{2}{4+h}},\quad\text{and}\quad Z=c_3 N^{-\frac{6+2h}{4+h}},
	\end{equation*}
 whereas $c_i$ are positive values that eventually depend on $\kappa$ and $h$.\par
 Finally, for any set of $N$ points $\mathcal{P}_N=\{\mathbf{p}_j\}_{j=1}^N\subset\mathbb{T}^2$, by Parseval's identity and by Cassels-Montgomery lemma, we get
	\begin{align*}
		\int_\Xi\int_{\mathbb{T}^2}\left|\mathcal{D}(\mathcal{P}_N,\, [\boldsymbol{\tau},\xi] C)\right|^2\de \boldsymbol{\tau}\de\xi&=\sum_{\mathbf{m}\in\mathbb{Z}^2\setminus\{\mathbf{0}\}}\left|\sum_{j=1}^{N}e^{2\pi i \mathbf{m}\cdot \mathbf{p}_j}\right|^2\int_\Xi\left|\widehat{\mathds{1}}_{\xi C}(\mathbf{m})\right|^2\de \xi\\
		&\geq\tilde{c} \sum_{|\mathbf{m}|\geq\tilde{\rho}} \left|\sum_{j=1}^{N}e^{2\pi i \mathbf{m}\cdot \mathbf{p}_j}\right|^2Z\Phi(\mathbf{m})\\
		&=\tilde{c} Z\int_I\sum_{|\mathbf{m}|\geq\tilde{\rho}\,\text{ and }\,\mathbf{m}\in[\theta] R}\left|\sum_{j=1}^{N}e^{2\pi i \mathbf{m}\cdot \mathbf{p}_j}\right|^2\de \theta\\
		&\geq\tilde{c} Z |I| \left(\kappa N^2-c_{\tilde{\rho}} N^2\right),
	\end{align*}
	so that, by choosing $\kappa=2c_{\rho_0}$ in the last line, we obtain
	\begin{equation*}
	\int_\Xi\int_{\mathbb{T}^2}\left|\mathcal{D}(\mathcal{P}_N,\, [\boldsymbol{\tau},\xi] C)\right|^2\de \boldsymbol{\tau}\de\xi\geq c_4 N^{-\frac{6+2h}{4+h}}N^2=c_4 N^{\frac{2}{4+h}},
	\end{equation*}
 whereas $c_4$ is a positive value that eventually depends on $h$, $\tilde{\rho}$, $\tilde{c}$ and $|I|$.
\end{proof}

We now turn to the proof of our main results on the affine quadratic discrepancy of planar convex bodies.
\begin{proof}[Proof of Theorem~\ref{t1}]
	By Lemma~\ref{lem2}, we have that, uniformly for every $\omega\in\mathbb{T}_{2\pi}$, it holds
	\begin{equation}\label{Aux}
	\int_I\int_{0}^1\left|\widehat{\mathds{1}}_{[\delta, \theta] C}(\rho\,{\mathbf{u}(\omega)})\right|^2\de \delta\de \theta\asymp\rho^{-3}.
	\end{equation}
	Hence, by Theorem~\ref{t4} in the case of $h=0$, we get the lower bound
	\begin{equation*}
	\inf_{\# \mathcal{P}=N}\mathcal{D}_2(\mathcal{P},\, C,\,I)\succcurlyeq
	N^{1/2}.
	\end{equation*}\par
	In order to show the upper bound, we aim to find a suitable sampling for every $N$. To proceed, we first show it in the case of $N$ being a square, and then the general upper bound will follow from Lagrange's four-square theorem and the fact that, for $a_1,\ldots, a_4\geq0$, it holds
 \begin{equation*}
    \left(\sum_{j=1}^{4}a_j\right)^2\leq4\sum_{i=j}^{4}a_j^2.
 \end{equation*}
 Hence, let $N$ be a square and consider
	\begin{equation*}
	\mathcal{P}_N=\left\{\mathbf{p}_{h,j}\right\}_{h,j=1}^{N^{1/2}}=\left\{\left(\frac{h}{N^{1/2}},\frac{j}{N^{1/2}}\right)\right\}_{h,j=1}^{N^{1/2}}\subset\mathbb{T}^2.
	\end{equation*}
	By Parseval's identity, we get
	\begin{equation*}
		\mathcal{D}_2(\mathcal{P}_N,\, C,\,I)=\sum_{\mathbf{m}\in\mathbb{Z}^2\setminus\{\mathbf{0}\}}\left|\sum_{h=1}^{N^{1/2}}\sum_{j=1}^{N^{1/2}}e^{2\pi i \mathbf{m}\cdot \mathbf{p}_{h,j}}\right|^2\int_I\int_{0}^{1}\left|\widehat{\mathds{1}}_{[\delta, \theta] C}(\mathbf{m})\right|^2\de \delta\de \theta,
	\end{equation*}
	and in particular, notice that
	\begin{equation*}
	\sum_{h=1}^{N^{1/2}}\sum_{j=1}^{N^{1/2}}e^{2\pi i \mathbf{m}\cdot \mathbf{p}_{h,j}}=\begin{cases}
		N & \text{if } m_1=n_1N^{1/2}\text{ and } m_2=n_2N^{1/2}\text{ for some }\mathbf{n}\in\mathbb{Z}^2\\
		0 &\text{else}
	\end{cases}.
	\end{equation*}
	Finally, by \eqref{Aux}, we get
	\begin{equation*}
	\begin{split}
		\mathcal{D}_2(\mathcal{P}_N,\, C,\,I)&=\sum_{\mathbf{n}\in\mathbb{Z}^2\setminus\{\mathbf{0}\}}N^2\int_I\int_{0}^{1}\left|\widehat{\mathds{1}}_{[\delta, \theta] C}(\mathbf{n}N^{1/2})\right|^2\de \delta\de \theta\\
		&\preccurlyeq N^2\sum_{\mathbf{n}\in\mathbb{Z}^2\setminus\{\mathbf{0}\}} |\mathbf{n}|^{-3}N^{-3/2}\preccurlyeq N^{1/2}.
	\end{split}
	\end{equation*}
\end{proof}

	The proof of Theorem~\ref{t6} requires more attention. The first step to prove both lower and upper bound will be to individuate two sectors of $\mathbb{T}_{2\pi}$ where the averaged Fourier transform of $C$ has different magnitudes of decay.
\begin{proof}[Proof of Theorem~\ref{t6}]
	First, we prove the lower bound. By Lemma~\ref{t3} and Corollary~\ref{c2}, and by accounting the fact that not all points on the boundary of a planar convex body can be angular points, it follows that there exists an interval $I_1\subset\mathbb{T}_{2\pi}$ such that, uniformly for every $\omega\in I_1\cup(I_1+\pi)$, it holds
	\begin{equation*}
	\int_I\int_{0}^1\left|\widehat{\mathds{1}}_{[\delta, \theta] C}(\rho\,{\mathbf{u}(\omega)})\right|^2\de \delta\de \theta\asymp\rho^{-3}.
	\end{equation*}
    Moreover, by the results in Lemma~\ref{t3} and Proposition~\ref{r0}, we obtain that, uniformly for every $\omega\in\left(I_1\cup(I_1+\pi)\right)^\mathsf{c}$, it holds
	\begin{equation*}
	\int_I\int_{0}^1\left|\widehat{\mathds{1}}_{[\delta,\theta] C}(\rho\,\mathbf{u}(\omega))\right|^2\de \delta\de \theta\succcurlyeq\rho^{-2}\int_{\omega+I}\gamma_C^2(\theta,\rho^{-1})\de \theta\succcurlyeq\rho^{-4}.
	\end{equation*}
	Therefore, by applying Theorem~\ref{t4} in the case of $h=1$, we get the lower bound
	\begin{equation*}
	\inf_{\# \mathcal{P}=N}\mathcal{D}_2(\mathcal{P},\, C,\,I)\succcurlyeq
	N^{2/5}.
	\end{equation*}\par 
	Now, we proceed to show the upper bound. Since $|I|<\psi_C$, and by Lemma~\ref{t3} and Remark~\ref{r1}, we get the existence of an open interval $I_2$ such that, uniformly for every $\omega\in I_2\cup(I_2+\pi)$, it holds
	\begin{equation}\label{e1}
	\int_I\int_{0}^1\left|\widehat{\mathds{1}}_{[\delta,\theta] C}(\rho\,\mathbf{u}(\omega))\right|^2\de \delta\de \theta\asymp\rho^{-2}\int_{\omega+I}\gamma_C^2(\theta,\rho^{-1})\de \theta\asymp\rho^{-4}.
	\end{equation}
	Hence, by the results in Lemma~\ref{t3} and Corollary~\ref{c2}, we have that, uniformly for every $\omega\in\left(I_2\cup(I_2+\pi)\right)^\mathsf{c}$, it holds
	\begin{equation}\label{e2}
	\int_I\int_{0}^1\left|\widehat{\mathds{1}}_{[\delta,\theta] C}(\rho\,\mathbf{u}(\omega))\right|^2\de \delta\de \theta\preccurlyeq\rho^{-2}\int_{\omega+I}\gamma_C^2(\theta,\rho^{-1})\de \theta\preccurlyeq\rho^{-3}.
	\end{equation}
	We proceed to show an explicit construction of suitable samplings. First, let us do it for a number $N$ of points such that
 \begin{equation*}
     N=\lfloor n^{3/5}\rfloor\,\lfloor n^{2/5}\rfloor\quad\text{for some}\quad n\in\mathbb{N}.
 \end{equation*}
 Hence, set\begin{equation*}
G=\lfloor n^{3/5}\rfloor,\quad L=\lfloor n^{2/5}\rfloor,\quad J_G=[0,G-1]\cap\mathbb{N}\quad\text{and}\quad J_L=[0,L-1]\cap\mathbb{N}.
 \end{equation*}
 Now, take
 \begin{equation*}
     \frac{q_1}{q_2}\in\mathbb{Q}\quad\text{such that}\quad\gcd(q_1,q_2)=1\quad\text{and}\quad\arctan \frac{q_1}{q_2}\in I_2\cup(I_2+\pi),
 \end{equation*}
 so that the line $q_2y=q_1x$ makes an angle in $I_2\cup(I_2+\pi)$ with the $x$-axis. For the sake of simplicity, we set
 \begin{equation*}
     \tilde{\omega}=\arctan \frac{q_1}{q_2}.
 \end{equation*}
 To glimpse the idea behind the coming construction, notice that
	\begin{equation*}
	\sigma_{\tilde{\omega}}(x,y)=\left(q_1^2+q_2^2\right)^{-1/2}(q_2x-q_1y,\,q_1x+q_2y).
	\end{equation*}
	Hence, consider the set of points $\mathcal{P}_N\subset\mathbb{T}^2$ defined by
	\begin{equation*}
	\mathcal{P}_N=\left\{\mathbf{p}_j\right\}_{j=1}^N=\{\mathbf{p}_{\ell,g}\}_{\ell\in J_L,\,g\in J_G}\quad\text{with}\quad \mathbf{p}_{\ell,g}=\left(q_2\frac{\ell}{L}-q_1\frac{g}{G},\,q_1\frac{\ell}{L}+q_2\frac{g}{G}\right),
	\end{equation*}
	where the coordinates of $\mathbf{p}_{\ell,g}$ are to be intended modulo $1$ (in particular, repetition of points in $\mathcal{P}_N$ is admitted, and we refer to Figure~\ref{FF4} for an example).\begin{figure}
        \centering
        \includegraphics[width=0.9\linewidth]{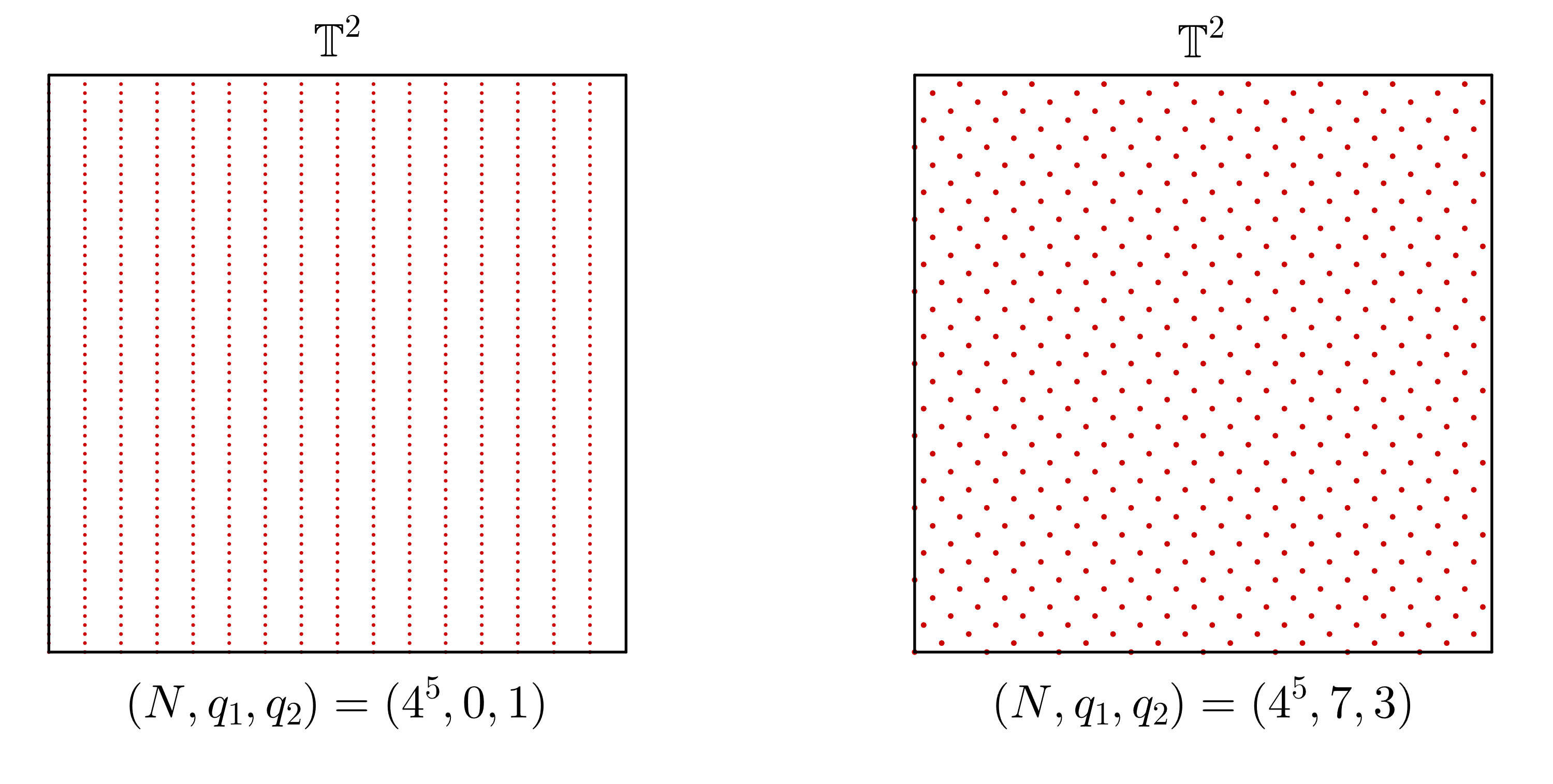}
        \caption{We depict two different dispositions of the points in $\mathcal{P}_N$ for $N=4^5$. The one on the left side corresponds to the starting lattice. The points on the right side are to be counted twice.}
        \label{FF4}
    \end{figure} Further, one may notice that $\mathbf{p}_{\ell,g}$ is the representative in $\mathbb{T}^2$ of the point $\left(\ell/L,g/G\right)\subset\mathbb{R}^2$ after a counterclockwise rotation by the angle $\tilde{\omega}$ and a dilation by the factor $(q_1^2+q_2^2)^{1/2}$. Again, by Parseval's identity, we obtain
	\begin{equation*}
	\int_{\mathbb{T}^2}\left|\mathcal{D}(\mathcal{P}_N, [\boldsymbol{\tau}, \delta, \theta] C)\right|^2\de \boldsymbol{\tau}=\sum_{\mathbf{m}\neq(0,0)}\left|\widehat{\mathds{1}}_{[\delta, \theta] C}(\mathbf{m})\right|^2\left|\sum_{g\in J_G}\sum_{\ell\in J_L}e^{2\pi i m\cdot \mathbf{p}_{\ell,g}}\right|^2.
	\end{equation*}
	Observe that 
	\begin{equation*}
	\begin{split}
		&\sum_{g\in J_G}\sum_{\ell\in J_L}e^{2\pi i\left(\frac{\ell}{L}(q_2m_1+q_1m_2)+\frac{g}{G}(q_2m_2-q_1m_1)\right)}=
		\\&=\begin{cases}
			GL &\text{if } q_2m_1+q_1m_2=n_1L\text{ and } q_2m_2-q_1m_1=n_2G\text{ for some }\mathbf{n}\in\mathbb{Z}^2\\
			0 &\text{else}
		\end{cases},
	\end{split}
	\end{equation*}
	hence we are looking for all non-zero $\mathbf{m}\in\mathbb{Z}^2$ for which there exist some $\mathbf{n}\in\mathbb{Z}^2$ such that
	\begin{equation*}
	\begin{cases}
		m_1=\frac{1}{q_1^2+q_2^2}(q_2n_1L-q_1n_2G)\\
		m_2=\frac{1}{q_1^2+q_2^2}(q_1n_1L+q_2n_2G)
	\end{cases}.
	\end{equation*}
	We label as $\mathcal{R}$ the set of all the $\mathbf{m}\in\mathbb{Z}^2$ that happen to be solutions to the latter system. Furthermore, we consider the auxiliary set
	\begin{equation*}
	\tilde{\mathcal{R}}=(q_1^2+q_2^2)^{-1/2}\left\{(n_1L,n_2G)\,:\,\mathbf{n}\in\mathbb{Z}^2\setminus\{\mathbf{0}\}\right\},
	\end{equation*}
	and in particular, we notice that $[-\tilde{\omega}]\mathcal{R}\subset\tilde{\mathcal{R}}$.

 Again, since the Fourier transform and rotations commute, we get
	\begin{equation}\label{e6}
	\begin{split}
		\sum_{\mathbf{m}\in\mathcal{R}}\left|\widehat{\mathds{1}}_{[\delta, \theta] C}(\mathbf{m})\right|^2&=\sum_{\mathbf{m}\in[-\tilde{\omega}]\mathcal{R}}\left|\widehat{\mathds{1}}_{[\delta, \theta] C}(\sigma_{\tilde{\omega}}\mathbf{m})\right|^2\\
		&=\sum_{\mathbf{m}\in[-\tilde{\omega}]\mathcal{R}}\left|\widehat{\mathds{1}}_{[\delta, \theta-\tilde{\omega}] C}(\mathbf{m})\right|^2\leq\sum_{\mathbf{m}\in\tilde{\mathcal{R}}}\left|\widehat{\mathds{1}}_{[\delta, \theta-\tilde{\omega}] C}(\mathbf{m})\right|^2.
	\end{split}
	\end{equation}
    In order to estimate the latter quantity, we distinguish between two different regions of $\tilde{\mathcal{R}}$. First, we let
    \begin{equation*}
         \alpha\in\left(0,\frac{\pi}{2}\right)\quad\text{be such that}\quad[-\alpha,\alpha]\subset \left(I_2\cup(I_2+\pi)-\tilde{\omega}\right),
    \end{equation*}
    and then we split $\tilde{\mathcal{R}}$ in the region
	\begin{equation*}
	V=\left\{\mathbf{m}\in\tilde{\mathcal{R}}\,\colon\,\arg \mathbf{m}\in [-\alpha,\alpha]\right\}
	\end{equation*}
	and its complementary $V^\mathsf{c}$. In particular, the condition $\arg \mathbf{m}\in[-\alpha,\alpha]$ in the definition of $V$ translates into the requirement
 \begin{equation*}
     |n_2| G\leq |n_1| L \tan \alpha.
 \end{equation*}
 Hence, by \eqref{e6}, we have 
	\begin{align*}
		&\mathcal{D}_2(\mathcal{P}_N,\, C,\,I)\leq\\
        &\leq\int_I\int_{0}^{1}G^2L^2\sum_{\mathbf{m}\in\tilde{\mathcal{R}}}\left|\widehat{\mathds{1}}_{[\delta, \theta-\tilde{\omega}] C}(\mathbf{m})\right|^2\de \delta\de \theta\\
		&= G^2L^2\sum_{\mathbf{m}\in V}\int_I\int_{0}^{1}\left|\widehat{\mathds{1}}_{[\delta, \theta-\tilde{\omega}] C}(\mathbf{m})\right|^2\de\delta\de \theta +\\
        &+G^2L^2\sum_{\mathbf{m}\in V^\mathsf{c}}\int_I\int_{0}^{1}\left|\widehat{\mathds{1}}_{[\delta, \theta-\tilde{\omega}] C}(\mathbf{m})\right|^2\de\delta\de \theta.
	\end{align*}
	For the first sum in the last term, by \eqref{e1}, we get
	\begin{align*}
		&G^2L^2\sum_{\mathbf{m}\in V}\int_I\int_{0}^{1}\left|\widehat{\mathds{1}}_{[\delta, \theta-\tilde{\omega}] C}(\mathbf{m})\right|^2\de\delta\de \theta\preccurlyeq\\
        &\preccurlyeq G^2L^2\sum_{\mathbf{m}\in V}|\mathbf{m}|^{-4}\\
		&\preccurlyeq G^2L^2  (q_1^2+q_2^2)^{2}(1+\tan \alpha)^{-4}\sum_{n_1=1}^{+\infty}\:\sum_{n_2=0} ^{n_1 \frac{L}{G} \tan \alpha}\left(n_1L\right)^{-4}\\
		&\preccurlyeq  G^{2}L^{-2}\sum_{n_1=1}^{+\infty}n_1^{-4}\left(n_1\frac{L}{G}\tan\alpha+1\right)\preccurlyeq G^2L^{-2} \preccurlyeq N^{2/5}.
	\end{align*}
	On the other hand, for the second sum, we get
	\begin{align*}
		&G^2L^2\sum_{\mathbf{m}\in V^\mathsf{c}}\int_I\int_{0}^{1}\left|\widehat{\mathds{1}}_{[\delta, \theta-\tilde{\omega}] C}(\mathbf{m})\right|^2\de\delta\de \theta\preccurlyeq\\
        &\preccurlyeq  G^2L^2\sum_{\mathbf{m}\in V^\mathsf{c}}|\mathbf{m}|^{-3}\\
		&\preccurlyeq G^2L^2  (q_1^2+q_2^2)^{3/2}(1+\cot \alpha)^{-3}\sum_{n_2=1}^{+\infty}\:\sum_{n_1=0}^{n_2 \frac{G}{L} \cot \alpha}\left(n_2G\right)^{-3}\\
		&\preccurlyeq G^{-1}L^2\sum_{n_2=1}^{+\infty}n_2^{-3}\left(n_2\frac{G}{L}\cot\alpha+1\right)\preccurlyeq L\preccurlyeq N^{2/5},
	\end{align*}
	and we can conclude that the initial claim holds for all $N$ of the form $N=\lfloor n^{3/5}\rfloor\,\lfloor n^{2/5}\rfloor$.\par 
    In order to prove that there is a suitable choice of points for every positive integer $N$, consider the following recursive definition
	\begin{equation*}
	n_j=\max\left\{n\in\mathbb{N}\;\colon\lfloor n^{3/5}\rfloor\,\lfloor n^{2/5}\rfloor \leq N-\sum_{i=1}^{j-1}\,\lfloor n_i^{3/5}\rfloor\,\lfloor n_i^{2/5}\rfloor\right\}\quad\text{for}\quad j\in\mathbb{N}\setminus\{0\},
	\end{equation*}
	whereas improper sums are conventionally considered as zeros.
	Now, notice that the latter definition implies that
	\begin{equation*}
 \begin{split}
	N-\sum_{i=1}^{j-1}\,\lfloor n_i^{3/5}\rfloor\,\lfloor n_i^{2/5}\rfloor&\leq\lfloor (n_j+1)^{3/5}\rfloor\,\lfloor (n_j+1)^{2/5}\rfloor\\
    &\leq\lfloor n_j^{3/5}\rfloor\,\lfloor n_j^{2/5}\rfloor+\lfloor n_j^{3/5}\rfloor+\lfloor n_j^{2/5}\rfloor+1,
  \end{split}
	\end{equation*}
	and therefore, it follows that
	\begin{equation*}
	N-\sum_{i=1}^{j}\,\lfloor n_i^{3/5}\rfloor\,\lfloor n_i^{2/5}\rfloor\leq2n_j^{3/5}.
	\end{equation*}
	Again, by the definition of $n_j$, it is easy to see that
	\begin{equation*}
	\frac{n_1}{2}\leq N\quad\text{and}\quad
	\frac{n_{j+1}}{2}\leq N-\sum_{i=1}^{j}\,\lfloor n_i^{3/5}\rfloor\,\lfloor n_i^{2/5}\rfloor,
	\end{equation*}
	so that by induction, we get
	\begin{equation*}
	N-\sum_{i=1}^{j}\,\lfloor n_i^{3/5}\rfloor\,\lfloor n_i^{2/5}\rfloor\leq2n_j^{3/5}\leq2^{4}n_{j-1}^{9/25}\leq2^{2j}N^{\left(3/5\right)^j}.
	\end{equation*}
	In particular, notice that
	\begin{equation*}
	N-\sum_{j=1}^{4}\lfloor n_j^{3/5}\rfloor\,\lfloor n_j^{2/5}\rfloor\leq2^8 N^{\left(3/5\right)^{4}}=o(N^{1/5}).
	\end{equation*}
	Finally, we associate a choice of points as in the previous construction to every $N_j=\lfloor n_j^{3/5}\rfloor\,\lfloor n_j^{2/5}\rfloor$ with $1\leq j\leq4$, and we do not get bothered by the remaining points since the reminder is $o(N^{1/5})$. The conclusion follows at once since, for $a_1,\ldots,a_5\geq0$, it holds
 \begin{equation*}
	\left(\sum_{j=1}^{5}a_j\right)^2\leq5\sum_{j=1}^{5}a_j^2.
	\end{equation*}
\end{proof}

\section{Intermediate Orders of Discrepancy}\label{S4}

We now prove that, for an interval of angles
\begin{equation}\label{NotazInter}
    I(\phi)=\left[-\frac{\phi}{2},\frac{\phi}{2}\right]\subset\mathbb{T}_{2\pi}\quad\text{with}\quad\phi\in(0,\pi),\quad \text{and for}\quad \alpha\in(1,+\infty),
\end{equation}
there exists a planar convex body $C(\phi,\alpha)$ with piecewise-$\mathcal{C}^\infty$ boundary such that it holds
\begin{equation*}
\inf_{\# \mathcal{P}=N}\mathcal{D}_2(\mathcal{P},\, C(\phi,\alpha),\,I(\phi))\asymp N^{\frac{2\alpha}{4\alpha+1}}.
\end{equation*}
For the sake of notation, the letter $\varepsilon$ will stand for a generic positive small value throughout this section. Moreover, for an interval $U\subseteq [0,+\infty)$ and two positive functions $f$ and $g$ defined on $U$, we say that for $x\in U$ it holds
\begin{equation*}
    f(x)\approx g(x)
\end{equation*}
to intend that there exist positive values $c_1$ and $c_2$ (which eventually depend on $\alpha$ and $\phi$) such that, for every $x\in U$, it holds
\begin{equation*}
c_1\, g(x)\leq f(x)\leq c_2\, g(x).
\end{equation*}\par
The key to obtaining these intermediate orders is to build such a convex body in a way that $\psi_{C(\phi,\alpha)}=\phi$. For the sake of construction, first, consider a planar convex body $H(\phi,\alpha)$ such that it has a centre of symmetry and such that it is symmetric with respect to the line
\begin{equation*}
    y=x\tan\!\left(\frac{\pi}{2}-\frac{\phi}{2}\right).
\end{equation*}
Moreover, build it in such a way that 
\begin{equation*}
\left\{(x,x^\alpha)\in\mathbb{R}^2\,\colon\, x\in[0,\varepsilon]\right\}\subset\partial H(\phi,\alpha).
\end{equation*}
Last, construct $H(\phi,\alpha)$ in such a way that its boundary is $\mathcal{C}^\infty$ except at the origin and at its symmetric counterpart (see Figure~\ref{FF5}).\begin{figure}
        \centering
        \includegraphics[width=0.9\linewidth]{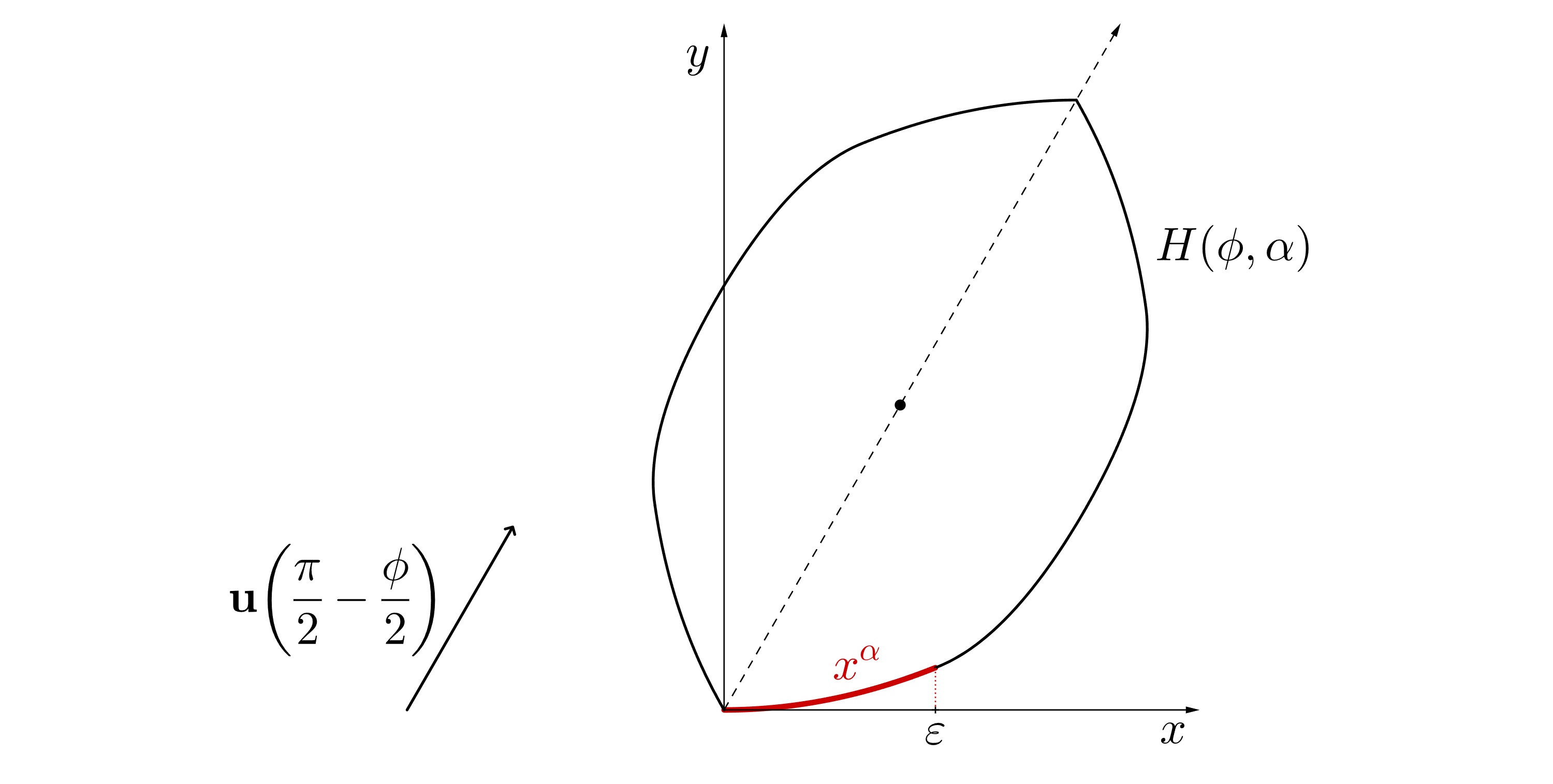}
        \caption{A depiction of $H(\phi,\alpha)$.}
        \label{FF5}
    \end{figure} Hence, in order to evaluate its affine quadratic discrepancy, it is sufficient to get estimates for the chords of $H(\phi,\alpha)$ about the origin. By symmetry, we can restrict ourselves to study the directions
\begin{equation*}
    \uthe\quad\text{for}\quad\theta\in\left[\frac{\pi}{2}-\frac{\phi}{2},\frac{\pi}{2}+\varepsilon\right].
\end{equation*}
First, we present an auxiliary technical result.
\begin{lem}\label{r2}

Let $\alpha$ and $\beta$ be positive numbers, and let $g\colon\mathbb{R}^+\to\mathbb{R}^+$ be such that
\begin{equation*}
g(x)\approx\begin{cases}
	x^\alpha& \textnormal{if}\quad 0\leq x< 1\\
	x^\beta& \textnormal{if}\quad x\geq 1 
\end{cases}.
\end{equation*}
If $x_y$ is such that $g(x_y)=y$, then it holds
\begin{equation*}
x_y\approx\begin{cases}
	y^{1/\alpha} & \textnormal{if}\quad 0\leq x<1 \\
	y^{1/\beta} & \textnormal{if}\quad y\geq1
\end{cases}.
\end{equation*}
\end{lem}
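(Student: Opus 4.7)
The plan is to invert the asymptotic relation piece by piece, using only that $\approx$ unpacks to a two-sided bound by constants and that, for any positive exponent $\gamma$, the map $t\mapsto t^{1/\gamma}$ is monotone on $\mathbb{R}^+$ and hence preserves such bounds. No inverse function theorem or global monotonicity of $g$ is needed.

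Concretely, I would split on the location of $x_y$. If $x_y\in[0,1)$, the hypothesis gives positive constants $c_1,c_2$ (depending only on $\alpha,\beta$) with $c_1 x_y^{\alpha}\leq y=g(x_y)\leq c_2 x_y^{\alpha}$; taking the $1/\alpha$-th root of each side and rearranging yields $c_2^{-1/\alpha}y^{1/\alpha}\leq x_y\leq c_1^{-1/\alpha}y^{1/\alpha}$, that is, $x_y\approx y^{1/\alpha}$. The case $x_y\geq 1$ is entirely analogous, with $\alpha$ replaced by $\beta$ and the same inversion applied to the second branch of the hypothesis.

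To reconcile the case split on $x_y$ with the case split on $y$ suggested in the statement, I would observe that the hypothesis evaluated at $x=1$ gives $g(1)\approx 1$, so $x_y<1$ and $x_y\geq 1$ correspond, up to absolute constants, to $y$ small and $y$ large respectively; the constants $c_1,c_2$ absorb the transition. I do not foresee any substantive obstacle here: the proof is a direct unpacking of $\approx$, and the only minor care required is this threshold reconciliation.
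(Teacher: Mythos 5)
Your proof is correct and follows essentially the same route as the paper: unpack the two-sided bounds, take $1/\alpha$-th or $1/\beta$-th roots on the appropriate branch, and note that in the transition region both $x_y$ and $y$ are bounded above and below by positive constants so the discrepancy between splitting on $x_y$ versus on $y$ is absorbed into the constants. The paper merely organizes the case split by $y$ (small, large, intermediate) rather than by $x_y$, which is an immaterial difference.
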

\begin{proof}
	By hypothesis, there exist two positive values $c_1$ and $c_2$ such that it holds
	\begin{equation*}
	\begin{cases}
		c_1\,x^\alpha\leq g(x)\leq c_2\,x^\alpha& \text{if}\quad 0\leq x< 1\\
		c_1\,x^\beta\leq g(x)\leq c_2\,x^\beta& \text{if}\quad x\geq 1 
	\end{cases}.
	\end{equation*}
	If $y<c_1$ then we necessarily have $0\leq x_y\leq 1$, and therefore
    \begin{equation*}
	    c_1\,x_y^\alpha\leq y\leq c_2\,x_y^\alpha.
    \end{equation*}
 Rearranging, one gets that
	\begin{equation*}
	c_1^{1/{\alpha}}\,x_y\leq y^{1/{\alpha}}\leq c_2^{1/{\alpha}}\,x_y\quad\text{for}\quad y\in[0,c_1).
	\end{equation*}
	On the other hand, if $y>c_2$ then we necessarily have $x_y\geq 1$, and therefore
 \begin{equation*}
     c_1\,x_y^\beta\leq y\leq c_2\,x_y^\beta.
 \end{equation*}
 Rearranging, one gets that 
	\begin{equation*}
	c_1^{1/{\beta}}\,x_y\leq y^{1/{\beta}}\leq c_2^{1/{\beta}}\,x_y\quad\text{for}\quad y\in(c_2,+\infty).
	\end{equation*}
	The claim follows since, for every $y\in[c_1,c_2]$, we have that $x_y$ is bounded away from $0$ or $+\infty$.
\end{proof}

Let us first study the case when $s_{H(\phi,\alpha)}^o(\theta)$ is the origin, or in other words, when $\theta\in\left[\frac{\pi}{2}-\frac{\phi}{2},\frac{\pi}{2}\right]$.

\begin{prop}\label{l8}
	Let $H(\phi,\alpha)$ be as previously defined. Uniformly for every $\theta\in\left[\frac{\pi}{2}-\frac{\phi}{2},\frac{\pi}{2}\right]$, it holds
	\begin{equation*}
	\left|K_{H(\phi,\alpha)}(\theta,\rho^{-1})\right|\asymp\begin{cases}
		\rho^{-1/\alpha} & \textnormal{if}\quad 0\leq\frac{\pi}{2}-\theta<\rho^{\frac{1-\alpha}{\alpha}}\\
		\rho^{-1} (\frac{\pi}{2}-\theta)^{-1} & \textnormal{if}\quad \rho^{\frac{1-\alpha}{\alpha}}\leq\frac{\pi}{2}-\theta\leq\frac{\phi}{2}
	\end{cases}.
	\end{equation*}
\end{prop}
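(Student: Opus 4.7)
My plan is to locate the two endpoints of the chord $K_{H(\phi,\alpha)}(\theta,\rho^{-1})$ on the two arcs forming $\partial H(\phi,\alpha)$ near the origin, and then estimate their Euclidean distance via the law of cosines.

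First, I would pin down which two arcs carry the endpoints. The lower arc $y=x^\alpha$ has tangent $\mathbf{u}(0)$ at the origin, and its reflection across the symmetry line $y=x\tan(\pi/2-\phi/2)$ has tangent $\mathbf{u}(\pi-\phi)$ at the origin; consequently the normal cone at the origin is the interval $[\pi/2-\phi,\pi/2]$ (of length $\phi$, so that $\psi_{H(\phi,\alpha)}=\phi$). For every $\theta\in[\pi/2-\phi/2,\pi/2]\subset[\pi/2-\phi,\pi/2]$ the origin minimizes $\mathbf{a}\cdot\uthe$ over $\mathbf{a}\in H(\phi,\alpha)$, so for all sufficiently large $\rho$ the two chord endpoints $P_1,P_2$ lie on the lower arc and on its reflection, respectively. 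Setting $u=\pi/2-\theta\in[0,\phi/2]$, the chord line becomes $x\sin u+y\cos u=\rho^{-1}$, and intersecting it with $(x,x^\alpha)$ and with the reflection $(-t\cos\phi+t^\alpha\sin\phi,\,t\sin\phi+t^\alpha\cos\phi)$ produces
\begin{equation*}
x_1\sin u+x_1^\alpha\cos u=\rho^{-1},\qquad t\sin v+t^\alpha\cos v=\rho^{-1},\qquad v:=\phi-u\in[\phi/2,\phi].
\end{equation*}

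Since $\sin v$ stays bounded below on $[\phi/2,\phi]\subset(0,\pi)$ by a positive constant depending only on $\phi$, the second relation yields $t\approx\rho^{-1}$ uniformly in $u$. For the first relation I would rescale $x_1=(\tan u)^{1/(\alpha-1)}z$ (for $u>0$; the case $u=0$ reduces to $x_1^\alpha=\rho^{-1}$ directly), transforming it into $(\tan u)^{\alpha/(\alpha-1)}\cos u\,(z+z^\alpha)=\rho^{-1}$. Applying Lemma~\ref{r2} to $g(z)=z+z^\alpha$, whose behaviour is $z$ for small $z$ and $z^\alpha$ for large $z$, yields $z\approx\mu$ for $\mu\leq1$ and $z\approx\mu^{1/\alpha}$ for $\mu\geq1$, where $\mu\approx\rho^{-1}u^{-\alpha/(\alpha-1)}$. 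Translating back, $x_1\approx\rho^{-1}/u$ when $u\geq\rho^{(1-\alpha)/\alpha}$ and $x_1\approx\rho^{-1/\alpha}$ in the complementary range, in agreement with the two cases of the proposition.

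To convert this into a chord-length estimate I would invoke the law of cosines. Since $P_1=x_1\mathbf{u}(0)+O(x_1^\alpha)$ and $P_2=t\mathbf{u}(\pi-\phi)+O(t^\alpha)$, the rays from the origin to the leading-order positions of $P_1$ and $P_2$ subtend an angle $\pi-\phi$, and so
\begin{equation*}
|P_1-P_2|^2=x_1^2+t^2+2x_1 t\cos\phi+\textnormal{lower order}=(x_1+t\cos\phi)^2+t^2\sin^2\phi+\textnormal{lower order}.
\end{equation*}
For $\phi\in(0,\pi)$ we have $\sin\phi>0$, so this quadratic form is bounded below by $(\sin^2\phi)\,x_1^2$ and above by $(x_1+t)^2$. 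Combined with $t\approx\rho^{-1}$ and the fact that $x_1\gtrsim\rho^{-1}$ in both regimes, this yields $|K_{H(\phi,\alpha)}(\theta,\rho^{-1})|\approx x_1$, which is the proposition.

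The main obstacle is uniformity across the transition $u\approx\rho^{(1-\alpha)/\alpha}$ and tracking $\phi$-dependent constants (notably when $\cos\phi<0$, so the cross-term $2x_1 t\cos\phi$ is negative). Lemma~\ref{r2} absorbs the former, and the quadratic-form identity above shows the chord stays uniformly comparable to $x_1$ provided $\sin\phi$ is bounded away from zero, which is automatic for fixed $\phi\in(0,\pi)$. Verifying that the $x_1^\alpha$ and $t^\alpha$ correction terms are genuinely of lower order is routine since $\alpha>1$ forces them to be $o(x_1^2+t^2)$ as $\rho\to+\infty$.
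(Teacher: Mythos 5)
Your argument is correct and follows essentially the same route as the paper: both reduce the chord length to the scalar equation $x\sin u+x^{\alpha}\cos u=\rho^{-1}$ for the abscissa of the endpoint on the arc $y=x^{\alpha}$, apply the rescaling that turns it into a two-regime equation handled by Lemma~\ref{r2}, and note that the remaining half of the chord contributes only $O(\rho^{-1})\preccurlyeq x_1$. The only (cosmetic) difference is that the paper disposes of the non-dominant half by comparing with the auxiliary region $F(\alpha)=\{x\geq 0,\ y\geq x^{\alpha}\}$, whereas you compute the second endpoint on the reflected arc directly and conclude via the law of cosines; both treatments are sound.
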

\begin{proof}
	By symmetry, there exists $\rho_0>0$ such that, for every $\theta\in\left[\frac{\pi}{2}-\frac{\phi}{2},\frac{\pi}{2}\right]$ and for every $\rho\geq\rho_0$, we have that the part of the chord $K_{H(\phi,\alpha)}(\theta,\rho^{-1})$ at the right of $y=x\tan(\frac{\pi}{2}-\frac{\phi}{2})$ is longer than the part at the left. Hence, by considering the auxiliary shape
	\begin{equation*}
	F(\alpha)=\left\{(x,y)\in\mathbb{R}^2\,\colon\, x\geq0\,\text{ and }\,y\geq x^\alpha\right\},
	\end{equation*}
	it is not difficult to see that, uniformly for every $\theta\in\left[\frac{\pi}{2}-\frac{\phi}{2},\frac{\pi}{2}\right]$, it holds
 \begin{equation*}
     \left|K_{H(\phi,\alpha)}(\theta,\rho^{-1})\right|\asymp \left|K_{F(\alpha)}(\theta,\rho^{-1})\right|.
 \end{equation*}
 Therefore, we can restrict ourselves to studying the chords of $F(\alpha)$. Now, for the sake of notation, we let
 \begin{equation*}
     x_+=x_{F(\alpha)}^+(\theta,\rho^{-1})\quad\text{be the abscissa of}\quad s_{F(\alpha)}^+(\theta,\rho^{-1}),
 \end{equation*}
 and define $x_-$ analogously. It is immediate to see that, for every $\theta\in\left[\frac{\pi}{2}-\frac{\phi}{2},\frac{\pi}{2}\right]$, we have $x_-=0$, and it also holds
	\begin{equation*}
	\left|K_{F(\alpha)}(\theta,\rho^{-1})\right|=\frac{x_+-x_-}{\sin\theta}.
	\end{equation*}
	On the other hand, $x_+$ is the abscissa of the intersection in $x\geq 0$ between the curve $y=x^\alpha$ and the straight line
	\begin{equation*}y - \rho^{-1}\,\sin\theta =-\frac{1}{\tan\theta} (x-\rho^{-1}\cos\theta).
	\end{equation*} Rearranging, we have that $x_+$ is a solution of 
	\begin{equation*}
	x(x^{\alpha-1}\,\sin\theta+\cos\theta) = \rho^{-1},
	\end{equation*}
	and by the normalization
 \begin{equation*}
      z=x^{\alpha-1}\,\tan\theta,
 \end{equation*}
 we get the equation
	\begin{equation*}
	f(z)=z^{\frac{1}{\alpha-1}}\left(z+1\right)=\frac{(\tan\theta)^{\frac{\alpha}{\alpha-1}}}{\rho\sin\theta}.
	\end{equation*}

	Notice that it holds
	\begin{equation*}
	f(z)\approx\begin{cases}
		z^{\frac{1}{\alpha-1}} & \text{if}\quad 0\leq z < 1\\
		z^{\frac{\alpha}{\alpha-1}} & \text{if}\quad  z\geq 1
	\end{cases},
	\end{equation*}
	and by applying Lemma~\ref{r2}, and the fact that for $\theta\in\left[\frac{\pi}{2}-\frac{\phi}{2},\frac{\pi}{2}\right]$ it holds 
 \begin{equation*}
     \sin\theta\approx1\quad\text{and}\quad\cot\theta\approx \frac{\pi}{2}-\theta, 
 \end{equation*}
 it follows that
	\begin{equation*}
	x_+^{\alpha-1}\left(\frac{\pi}{2}-\theta\right)^{-1}\approx\begin{cases}
		 \rho^{\frac{1-\alpha}{\alpha}}\left(\frac{\pi}{2}-\theta\right)^{-1} & \text{if}\quad0\leq \frac{\pi}{2}-\theta < \rho^{\frac{1-\alpha}{\alpha}}\\
		 \rho^{1-\alpha}\left(\frac{\pi}{2}-\theta\right)^{-\alpha}& \text{if}\quad \rho^{\frac{1-\alpha}{\alpha}}\leq \frac{\pi}{2}-\theta \leq\frac{\phi}{2}
	\end{cases}.
	\end{equation*}
	By a last rearrangement, we get
	\begin{equation*}
	x_+\approx\begin{cases}
		\rho^{-1/\alpha} & \text{if}\quad0\leq \frac{\pi}{2}-\theta < \rho^{\frac{1-\alpha}{\alpha}}\\
		\rho^{-1}\left(\frac{\pi}{2}-\theta\right)^{-1}& \text{if}\quad \rho^{\frac{1-\alpha}{\alpha}}\leq \frac{\pi}{2}-\theta \leq\frac{\phi}{2}
	\end{cases}.
	\end{equation*}
\end{proof}

We now turn to estimating $\left|K_{H(\phi,\alpha)}(\theta,\rho^{-1})\right|$ in the case of $\theta\in\left[\frac{\pi}{2},\frac{\pi}{2}+\varepsilon\right]$. Again, we make use of an auxiliary shape. Namely, consider
\begin{equation*}
G(\alpha)=\left\{(x,y)\in\mathbb{R}^2\,\colon\, y\geq|x|^\alpha\right\},
\end{equation*}
and as before, notice that, uniformly for every $\theta\in\left[\frac{\pi}{2},\frac{\pi}{2}+\varepsilon\right]$, it holds
\begin{equation*}
\left|K_{H(\phi,\alpha)}(\theta,\rho^{-1})\right|\asymp \left|K_{G(\alpha)}(\theta,\rho^{-1})\right|.
\end{equation*}
First, we need a technical observation on the chords of $G(\alpha)$.
\begin{prop}\label{Ausilio}
	Let $G(\alpha)$ be as previously defined. There exists a positive value $c_\alpha$ such that, for every $\theta\in\left[\frac{\pi}{2},\frac{\pi}{2}+\varepsilon\right]$ and for every $\rho\geq1$, it holds
	\begin{equation*}
	\left|K_{G(\alpha)}^-(\theta,\rho^{-1})\right|\leq c_\alpha\left|K_{G(\alpha)}^+(\theta,\rho^{-1})\right|.
	\end{equation*}
\end{prop}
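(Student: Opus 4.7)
The plan is to exploit the anisotropic scaling symmetry $(x,y) \mapsto (x_0^{-1}x,\, x_0^{-\alpha}y)$ of $G(\alpha)$ to reduce the claim to a single-parameter problem, and then to show that the resulting ratio extends continuously to $[0,+\infty]$ with value $1$ at both endpoints.

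For $\theta = \pi/2$ the reflection symmetry $(x,y)\mapsto(-x,y)$ of $G(\alpha)$ yields $|K^-| = |K^+|$ exactly. For $\theta = \pi/2 + \phi$ with $\phi \in (0,\varepsilon]$ the minimum of $\mathbf{x}\cdot\uthe$ on $G(\alpha)$ is uniquely attained at $s^o = (x_0, x_0^\alpha)$ with $x_0 = (\tan\phi/\alpha)^{1/(\alpha-1)}$, and I would apply the linear map $\Lambda(x,y) = (x/x_0,\, y/x_0^\alpha)$. This preserves $G(\alpha)$, sends $s^o$ to $(1,1)$, and a direct computation shows it maps the chord $K_{G(\alpha)}(\theta, \lambda)$ to $K_{G(\alpha)}(\tilde\theta_\alpha, \tilde\lambda)$, where $\mathbf{u}(\tilde\theta_\alpha) = (-\alpha, 1)/\sqrt{\alpha^2+1}$ is fixed (depending only on $\alpha$) and $\tilde\lambda = \lambda / \sqrt{x_0^2\sin^2\phi + x_0^{2\alpha}\cos^2\phi}$ sweeps all of $(0,+\infty)$. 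Since $\Lambda$ is linear, both semi-chords $K^\pm$ (which lie along a common line) are scaled by the same factor, so the ratio $|K^-|/|K^+|$ is invariant; the problem reduces to bounding
\[
R_\alpha(\tilde\lambda) \;:=\; \frac{|K_{G(\alpha)}^-(\tilde\theta_\alpha, \tilde\lambda)|}{|K_{G(\alpha)}^+(\tilde\theta_\alpha, \tilde\lambda)|}
\]
uniformly for $\tilde\lambda > 0$.

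The function $R_\alpha$ is continuous with strictly positive denominator on $(0,+\infty)$, so it suffices to check the limits at the two endpoints. As $\tilde\lambda \to 0^+$, both $s^\pm$ lie on the right branch $Y = X^\alpha$ and converge to the smooth support point $(1,1)$; expanding $f(X) = X^\alpha - \alpha X$ around $X = 1$ (where $f'(1) = 0$ and $f''(1) = \alpha(\alpha-1)$) one obtains $|K^+|$ and $|K^-|$ both equal to $\sqrt{2\sqrt{\alpha^2+1}\,\tilde\lambda/(\alpha(\alpha-1))}$ to leading order, giving $R_\alpha \to 1$. As $\tilde\lambda \to +\infty$, write $s^+ = (T_+, T_+^\alpha)$ and $s^- = (-T_\ell, T_\ell^\alpha)$; the equations $T_+^\alpha - \alpha T_+ = T_\ell^\alpha + \alpha T_\ell = (1-\alpha) + \sqrt{\alpha^2+1}\,\tilde\lambda$ force $T_+$ and $T_\ell$ to be of common leading order $(\sqrt{\alpha^2+1}\,\tilde\lambda)^{1/\alpha}$, the contribution of the fixed $s^o = (1,1)$ becomes negligible, and again $R_\alpha \to 1$. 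Therefore $R_\alpha$ extends continuously to the compact interval $[0,+\infty]$ with value $1$ at both endpoints, so $c_\alpha := \sup_{\tilde\lambda > 0}R_\alpha(\tilde\lambda) < +\infty$ depends only on $\alpha$.

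The main technical point is the bookkeeping for the scaling step, namely checking that $\Lambda$ cleanly transports the semi-chord construction of Definition~\ref{Corde2}: $\Lambda(s^o(\theta,\lambda))$ must be the support point of the new chord (immediate from $\nabla(Y - X^\alpha)|_{(1,1)} \perp \mathbf{u}(\tilde\theta_\alpha)$) and the orientation convention $\boldsymbol{\Gamma}(s^-) - \boldsymbol{\Gamma}(s^+) = |K|\duthe$ must be preserved (straightforward from the explicit form of $\Lambda$ and the sign of $\cos\theta$ near $\pi/2$). Once this is in place, the limit computations at $\tilde\lambda \to 0^+$ and $\tilde\lambda \to +\infty$ are short and the continuity of $R_\alpha$ on $(0,+\infty)$ is immediate from continuous dependence of $s^\pm$ on $\tilde\lambda$.
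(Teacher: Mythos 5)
There is a genuine gap in the reduction step, and it sits exactly where you wave it off as ``bookkeeping.'' The semi-chords $K^\pm(\theta,\lambda)$ are delimited not by $\boldsymbol{\Gamma}(s^o)$ itself but by its projection \emph{in direction} $\uthe$ onto the chord, i.e.\ by the point $P=\boldsymbol{\Gamma}(s^o)+\lambda\uthe$ (the chord lies in a level set of $\mathbf{x}\cdot\uthe$, so this is the orthogonal projection). Your map $\Lambda(x,y)=(x/x_0,\,y/x_0^\alpha)$ does transport the boundary, the chord line, and the support point correctly, but it does not preserve orthogonality: a direct computation shows $\Lambda\uthe$ is parallel to $\mathbf{u}(\tilde\theta_\alpha)$ only when $x_0=1$. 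Concretely, $\Lambda P=(1-\lambda\sin\phi/x_0,\,1+\lambda\cos\phi/x_0^\alpha)$ while the splitting point of the rescaled configuration is $P'=(1,1)+\tilde\lambda\,\mathbf{u}(\tilde\theta_\alpha)$, and these agree only if $\cos^2\phi\,(\alpha^2+1)=1$. So $\Lambda$ maps the old semi-chords to two sub-segments of the new chord line, but \emph{not} to the semi-chords $K^\pm(\tilde\theta_\alpha,\tilde\lambda)$; the invariance of the ratio $|K^-|/|K^+|$ is false. The discrepancy $|\Lambda P-P'|$ is of order $\tilde\lambda$, whereas for large $\tilde\lambda$ the chord itself has length only of order $\tilde\lambda^{1/\alpha}$ (indeed $P'$ then lies far outside the chord segment), so this is not a harmless perturbation, and it occurs precisely in the delicate regime $\phi\to0$ that the scaling was meant to tame. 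Without a valid one-parameter reduction, the continuity-plus-endpoint-limits argument does not close, since the original quantity depends on the two parameters $(\phi,\lambda)$.

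The fix is to compare quantities that genuinely are scale-covariant, namely the horizontal extents: with $x_\pm$ the abscissas of the chord endpoints and $x_o$ that of the support point, one has $|K^-|\sin\theta\leq x_o-x_-$ and $|K^+|\sin\theta\geq x_+-x_o$ (the splitting point $P$ has abscissa $x_o-\lambda\sin\phi\leq x_o$), so it suffices to prove $x_o-x_-\leq c_\alpha(x_+-x_o)$, a statement about the curve and the chord line alone. This is what the paper does: after the normalization $z=(x-x_o)/x_o$ --- which is your scaling applied to the right quantity --- both intersection abscissas solve $f(z)=|z+1|^\alpha-\alpha z-1=(x_o^\alpha\rho\sin\theta)^{-1}$, and the pointwise derivative comparison $f'(z)\leq\frac{\partial}{\partial z}f(-2^\alpha z)$ for $z\geq0$ yields the explicit constant $c_\alpha=2^\alpha$, with no limiting or compactness argument needed. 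I would recommend recasting your argument along these lines; the endpoint asymptotics you compute are correct in spirit but are asymptotics of the wrong ratio.
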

\begin{proof}
	For the sake of notation, let
 \begin{equation*}
     x_o=x_{G(\alpha)}^o(\theta)\quad\text{be the abscissa of}\quad s_{G(\alpha)}^o(\theta).
 \end{equation*}
 Moreover, we let
 \begin{equation*}
 x_+=x_{G(\alpha)}^+(\theta,\rho^{-1})\quad\text{be the abscissa of}\quad s_{G(\alpha)}^+(\theta,\rho^{-1}),    
 \end{equation*}
 and define $x_-$ analogously. With the help of Figure~\ref{FF6}, notice that, for every $\theta\in\left[\frac{\pi}{2},\frac{\pi}{2}+\varepsilon\right]$, it holds
	\begin{equation*}
	\left|K_{G(\alpha)}^-(\theta,\rho^{-1})\right|\sin\theta \leq x_o-x_-\quad\text{and}\quad \left|K_{G(\alpha)}^+(\theta,\rho^{-1})\right|\sin\theta \geq x_+-x_o,
	\end{equation*}\begin{figure}
        \centering
        \includegraphics[width=0.9\linewidth]{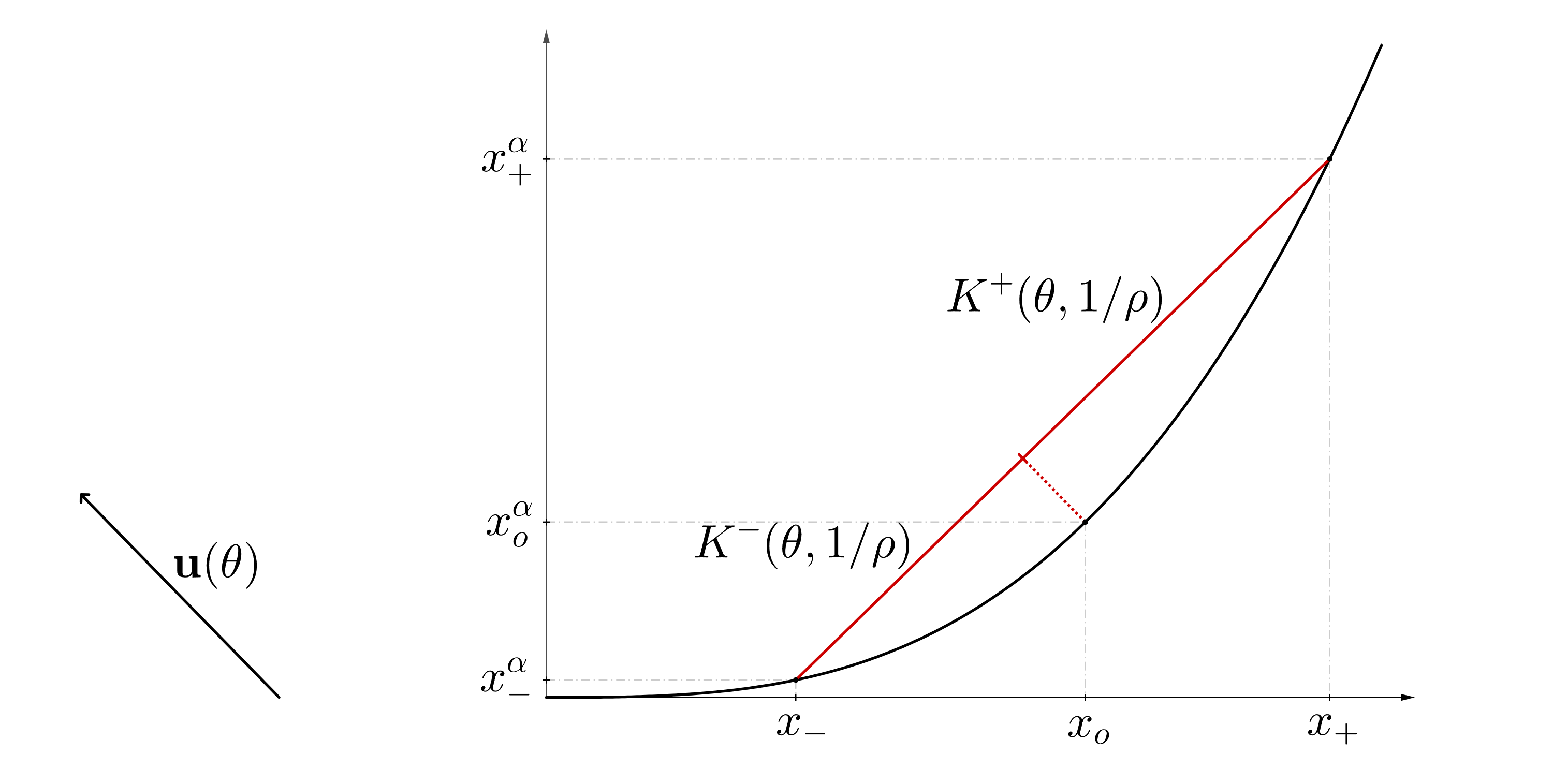}
        \caption{An auxiliary image for the proof of Proposition~\ref{Ausilio}. For simplicity, we omit to write $G(\alpha).$}
        \label{FF6}
    \end{figure}and therefore, it is enough to show that
 \begin{equation*}
     x_o-x_-\leq c_\alpha(x_+-x_o).
 \end{equation*}
 Indeed, for every $\theta\in\left[\frac{\pi}{2},\frac{\pi}{2}+\varepsilon\right]$, we have that $x_-$ and $x_+$ are the abscissas of the intersections of the curve $y=|x|^\alpha$ with the straight line
	\begin{equation*}y=(x-x_o)\alpha x_o^{\alpha-1}+x_o^\alpha+\frac{1}{\rho\sin\theta}.
	\end{equation*}
	Equalizing, and with the normalization $z=\frac{x-x_o}{x_o}$, we get to the equation \begin{equation}\label{e3}
		f(z)=|z+1|^{\alpha}-z\alpha-1=\frac{1}{x_o^\alpha\rho\sin\theta},
	\end{equation}
	and we also remark that, for every $\theta\in\left[\frac{\pi}{2},\frac{\pi}{2}+\varepsilon\right]$, both $x_o$ and $\sin\theta$ are non-negative. Hence, the conclusion follows once we show that
 \begin{equation*}
     f(z)\leq f(-c_\alpha\,z)\quad\text{for every}\quad z\geq 0,
 \end{equation*}
 since this would imply
	\begin{equation*}
	\frac{x_+-x_o}{x_o}\geq-\frac{1}{c_\alpha}\cdot\frac{x_--x_o}{x_o}.
	\end{equation*}
	Last, it is not difficult to see that by choosing $c_\alpha=2^\alpha$ then, for every $z\geq 0$, it holds
	\begin{equation*}
	f'(z)=\alpha\left(|z+1|^{\alpha-1}-1\right) \leq\alpha 2^\alpha \left(|2^\alpha z-1|^{\alpha-1}+1\right)=\frac{\partial}{\partial z}f(-2^\alpha z),
	\end{equation*}
	and indeed, one has
 \begin{equation*}
     z+1\leq2\quad\text{for}\quad 0\leq z <1,\quad\text{and}\quad z+1\leq2(2z-1)\quad\text{for}\quad z\geq1.
 \end{equation*}
\end{proof}

Now, we proceed to estimate the chords in the case of $\theta\in\left[\frac{\pi}{2},\frac{\pi}{2}+\varepsilon\right]$.

\begin{prop}\label{l9}
	Let $H(\phi,\alpha)$ be as previously defined. Uniformly for every $\theta\in\left[\frac{\pi}{2},\frac{\pi}{2}+\varepsilon\right]$, it holds
	\begin{equation*}
	\left|K_{H(\phi,\alpha)}(\theta,\rho^{-1})\right|\asymp\begin{cases}
		\rho^{-1/\alpha} & \textnormal{if}\quad0\leq\theta-\frac{\pi}{2}<\rho^{\frac{1-\alpha}{\alpha}}\\
		\rho^{-1/2}\left(\theta-\frac{\pi}{2}\right)^{\frac{2-\alpha}{2(\alpha-1)}}& \textnormal{if}\quad \rho^{\frac{1-\alpha}{\alpha}}\leq \theta-\frac{\pi}{2} \leq \varepsilon
	\end{cases}.
	\end{equation*}
\end{prop}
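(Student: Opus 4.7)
The approach is to reduce the estimate for $H(\phi,\alpha)$ to one for the auxiliary shape $G(\alpha)$, which (as observed just before the statement) has chords comparable to those of $H(\phi,\alpha)$ in the relevant range of $\theta$, and then to exploit the setup and normalization $z=(x-x_o)/x_o$ already used in the proof of Proposition~\ref{Ausilio}. Writing $x_o$, $x_-$, $x_+$ for the abscissas of $s^o_{G(\alpha)}(\theta)$ and $s^\pm_{G(\alpha)}(\theta,\rho^{-1})$, the elementary relation $|K_{G(\alpha)}(\theta,\rho^{-1})|=(x_+-x_-)/\sin\theta$ together with $\sin\theta\asymp 1$ on $[\tfrac{\pi}{2},\tfrac{\pi}{2}+\varepsilon]$ and Proposition~\ref{Ausilio} (which yields $x_+-x_-\asymp x_+-x_o$) reduces the task to estimating the single quantity $x_+-x_o=x_o z_+$.

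Second, I would compute $x_o$ directly. The condition that $\mathbf{u}(\theta)$ be normal to $y=x^\alpha$ at $x_o$ reads $\cos\theta+\alpha x_o^{\alpha-1}\sin\theta=0$, whence $x_o^{\alpha-1}=\tan(\theta-\tfrac{\pi}{2})/\alpha$ and therefore $x_o\asymp(\theta-\tfrac{\pi}{2})^{1/(\alpha-1)}$ uniformly for $\theta\in[\tfrac{\pi}{2},\tfrac{\pi}{2}+\varepsilon]$.

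Third, from \eqref{e3} the value $z_+>0$ satisfies
\[
f(z)=(1+z)^\alpha-\alpha z-1=\frac{1}{x_o^\alpha\rho\sin\theta}.
\]
The Taylor expansion $f(z)=\tfrac{\alpha(\alpha-1)}{2}z^2+O(z^3)$ gives $f(z)\approx z^2$ on any bounded interval of $[0,\infty)$, while for $z\geq 1$ one has $f(z)\approx z^\alpha$. Lemma~\ref{r2} then inverts $f$ to give $z_+\approx(x_o^\alpha\rho)^{-1/2}$ when $x_o^\alpha\rho\geq 1$, and $z_+\approx(x_o^\alpha\rho)^{-1/\alpha}$ when $x_o^\alpha\rho<1$. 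Multiplying by $x_o$ yields $x_+-x_o\approx x_o^{1-\alpha/2}\rho^{-1/2}\approx(\theta-\tfrac{\pi}{2})^{(2-\alpha)/(2(\alpha-1))}\rho^{-1/2}$ in the first regime and $x_+-x_o\approx\rho^{-1/\alpha}$ in the second, while the threshold $x_o^\alpha\rho=1$ rewrites as $\theta-\tfrac{\pi}{2}\asymp\rho^{(1-\alpha)/\alpha}$, matching the case split in the statement.

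The main technical point is to check that the $\approx$-constants for $f$ and the constants produced by Lemma~\ref{r2} are genuinely independent of $\theta$ and $\rho$ in the regimes considered; this is fine because $f$ depends only on $\alpha$ and is strictly positive on $(0,\infty)$, but one should record the uniform bounds $f(z)/z^2\in[c_1,c_2]$ on $[0,1]$ and $f(z)/z^\alpha\in[c_1',c_2']$ on $[1,\infty)$ explicitly. A secondary point is that the intermediate range $x_o^\alpha\rho\asymp 1$ is handled automatically since both formulas give $\rho^{-1/\alpha}$ there, so the two pieces match continuously and no separate argument is required.
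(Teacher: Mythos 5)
Your proposal is correct and follows essentially the same route as the paper's proof: reduction to $G(\alpha)$, Proposition~\ref{Ausilio} to reduce to $x_+-x_o$, the normalized equation \eqref{e3} with $f(z)\approx z^2$ for $z<1$ and $f(z)\approx z^\alpha$ for $z\geq1$, inversion via Lemma~\ref{r2}, and $x_o\approx(\theta-\tfrac{\pi}{2})^{1/(\alpha-1)}$. The only (immaterial) difference is that you justify the two-regime bound on $f$ by the local Taylor expansion plus positivity and asymptotics, whereas the paper computes it from Taylor's formula with integral remainder.
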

\begin{proof}
	We have already noted that we can equivalently study the chords of the auxiliary shape $G(\alpha)$, and therefore, we define $x_-$, $x_o$, $x_+$, and $f$, as in the Proposition~\ref{Ausilio}. Since 
	\begin{equation*}
	\left|K_{G(\alpha)}(\theta,\rho^{-1})\right|\sin\theta=(x_+-x_-),
	\end{equation*}
	then, by the previous lemma, it is enough to estimate $(x_+-x_o)$. As before, $x_+$ is a solution of
	\begin{equation*}|x|^\alpha=(x-x_o)\alpha x_o^{\alpha-1}+x_o^\alpha+\frac{1}{\rho\sin\theta},
	\end{equation*}
	and again by the normalization $z=\frac{x-x_o}{x_o}$, we get \eqref{e3}. In particular, we remark that the solution $x_+$ corresponds to the range $z\geq0$. Now, by applying Taylor's formula with integral reminder to $f$, we get
	\begin{equation*}
	f(z)=\alpha(\alpha-1)\int_{0}^{z}(1+t)^{\alpha-2}(z-t)\de t.
	\end{equation*}
	Notice that for $z\in[0,1)$ it holds
	\begin{equation*}
	\int_{0}^{z}(1+t)^{\alpha-2}(z-t)\de t\approx \int_{0}^{z}(z-t)\de t=\frac{z^2}{2}.
	\end{equation*}
	On the other hand, for $z\in[1,+\infty)$ it holds
	\begin{equation*}
	\begin{split}
		\int_{0}^{z}(1+t)^{\alpha-2}(z-t)\de t&=\int_{0}^{z/2}(1+t)^{\alpha-2}(z-t)\de t+\int_{\frac{z}{2}}^{z}(1+t)^{\alpha-2}(z-t)\de t\\
		&\approx z\int_{0}^{z/2}(1+t)^{\alpha-2}\de t+z^{\alpha-2}\int_{\frac{z}{2}}^{z}(z-t)\de t\\
		&=\frac{z}{\alpha-1}\left(\left(1+z/2\right)^{\alpha-1}-1\right)+z^{\alpha-2}\frac{z^2}{8}\approx z^{\alpha}.
	\end{split}
	\end{equation*}
	Hence, we get
	\begin{equation*}
	f(z)\approx\begin{cases}
		z^2&\text{if}\quad0\leq z<1\\
		z^\alpha&\text{if}\quad z\geq1
	\end{cases},
	\end{equation*}
	and if we consider \eqref{e3}, by applying Lemma~\ref{r2}, and by the fact that for $\theta\in\left[\frac{\pi}{2},\frac{\pi}{2}+\varepsilon\right]$ it holds $\sin\theta\approx1$, then it follows that
	\begin{equation}\label{e4}
	\frac{x_+-x_o}{x_o}\approx\begin{cases}
		\rho^{-1/2}x_o^{-\alpha/2}&\text{if} \quad 0\leq \rho^{-1}x_o^{-\alpha}<1\\
		\rho^{-1/\alpha}x_o^{-1}&\text{if}\quad \rho^{-1}x_o^{-\alpha}\geq1
	\end{cases}.
	\end{equation}
	Last, by the definition of $x_o$, we have 
	\begin{equation*}
		\alpha x_o^{\alpha-1}=\left.\frac{d}{dx}x^\alpha\right|_{x=x_o}=\tan\!\left(\theta-\frac{\pi}{2}\right),
	\end{equation*}
	and therefore, we get that for $\theta\in\left[\frac{\pi}{2},\frac{\pi}{2}+\varepsilon\right]$ it holds
 \begin{equation*}
     x_o\approx\left(\theta-\frac{\pi}{2}\right)^{\frac{1}{\alpha-1}}.
 \end{equation*}
 The conclusion hence follows by a simple rearrangement of the terms in \eqref{e4}.
\end{proof}

Now, we are able to estimate the Fourier transform.
\begin{prop}\label{p1}
	Let $I(\phi)$ and $H(\phi,\alpha)$ be as previously defined, and let $\tilde{\phi}=\frac{\pi}{2}-\frac{\phi}{2}$. Uniformly for every $\omega\in[-\varepsilon,\varepsilon]$, it holds
	\begin{equation*}
	\int_{I(\phi)}\int_{0}^{1}\left|\widehat{\mathds{1}}_{[\delta, \theta]H(\phi,\alpha)}\left(\rho\,\mathbf{u}(\tilde{\phi}+\omega)\right)\right|^2\de \delta\de \theta\asymp\begin{cases}
		\rho^{-3-\frac{1}{\alpha}}&\textnormal{if}\quad|\omega|\leq\rho^{\frac{1-\alpha}{\alpha}}\\
		\rho^{-3}\omega^{\frac{1}{\alpha-1}}&\textnormal{if}\quad\rho^{\frac{1-\alpha}{\alpha}}<|\omega|\leq\varepsilon
	\end{cases}.
	\end{equation*}
\end{prop}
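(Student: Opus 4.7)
The plan is to reduce the double integral to a chord-length integral via Lemma~\ref{t3}, then carry out the one-dimensional analysis using Propositions~\ref{l8} and \ref{l9} together with the two symmetries of $H(\phi,\alpha)$.

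First, using \eqref{FourierProp}, I would rewrite
\begin{equation*}
\left|\widehat{\mathds{1}}_{[\delta,\theta]H(\phi,\alpha)}(\rho\,\mathbf{u}(\tilde\phi+\omega))\right|^2=\left|\widehat{\mathds{1}}_{[\delta]H(\phi,\alpha)}(\rho\,\mathbf{u}(\tilde\phi+\omega-\theta))\right|^2,
\end{equation*}
and since $H(\phi,\alpha)$ is a fixed body with positive, finite $L_{H(\phi,\alpha)}$ and $S_{H(\phi,\alpha)}$, Lemma~\ref{t3} applies uniformly in the direction. Hence, for all $\rho$ past a fixed threshold, the quantity in the statement is comparable to
\begin{equation*}
\rho^{-2}\int_{I(\phi)}\gamma_{H(\phi,\alpha)}^2(\tilde\phi+\omega-\theta,\rho^{-1})\de\theta=\rho^{-2}\int_{\pi/2-\phi+\omega}^{\pi/2+\omega}\gamma_{H(\phi,\alpha)}^2(\eta,\rho^{-1})\de\eta,
\end{equation*}
via the substitution $\eta=\tilde\phi+\omega-\theta$. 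The centre of symmetry of $H(\phi,\alpha)$ reduces $\gamma_{H(\phi,\alpha)}$ to $|K_{H(\phi,\alpha)}|$, and the reflection about the line at angle $\tilde\phi$ extends Proposition~\ref{l8} to $\eta\in[\pi/2-\phi,\pi/2-\phi/2]$ via the identity $|K_{H(\phi,\alpha)}(\eta,\rho^{-1})|=|K_{H(\phi,\alpha)}(\pi-\phi-\eta,\rho^{-1})|$.

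Setting $\mu=\eta-\pi/2\in[-\phi+\omega,\omega]$ and $\mu_\rho=\rho^{(1-\alpha)/\alpha}$, I would split the $\eta$-integral according to three regimes: the \emph{plateau} $|K_{H(\phi,\alpha)}|^2\asymp\rho^{-2/\alpha}$ on $|\mu|\leq\mu_\rho$; the Proposition~\ref{l8} tail $|K_{H(\phi,\alpha)}|^2\asymp\rho^{-2}|\mu|^{-2}$ for $\mu\in[-\phi/2,-\mu_\rho]$, together with its symmetric image near $\mu=-\phi$; and the Proposition~\ref{l9} tail $|K_{H(\phi,\alpha)}|^2\asymp\rho^{-1}\mu^{(2-\alpha)/(\alpha-1)}$ for $\mu\in[\mu_\rho,\varepsilon]$ (with its mirror counterpart when $\omega<0$). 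When $|\omega|\leq\mu_\rho$, the plateau contributes $\rho^{-2/\alpha}\mu_\rho\asymp\rho^{-1-1/\alpha}$, the Proposition~\ref{l8} tails contribute $\int_{\mu_\rho}^{\phi/2}\rho^{-2}t^{-2}\de t\asymp\rho^{-1-1/\alpha}$, and the truncated Proposition~\ref{l9} part is bounded by the same order; the outer $\rho^{-2}$ then yields $\rho^{-3-1/\alpha}$. When $\mu_\rho<|\omega|\leq\varepsilon$, the term
\begin{equation*}
\int_{\mu_\rho}^{|\omega|}\rho^{-1}\mu^{(2-\alpha)/(\alpha-1)}\de\mu\asymp\rho^{-1}|\omega|^{1/(\alpha-1)}
\end{equation*}
takes over and produces the $\rho^{-3}\omega^{1/(\alpha-1)}$ regime, because $|\omega|>\mu_\rho$ is precisely the condition $|\omega|^{1/(\alpha-1)}>\rho^{-1/\alpha}$.

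The hard part will be bookkeeping across these three regimes, which sit on opposite sides of the critical direction $\pi/2$ and have qualitatively different functional forms. In particular, I will need to verify that the plateau and the two tails all match at $|\mu|=\mu_\rho$ (each giving $\rho^{-1/\alpha}$), and that the symmetric contribution near the reflected corner $\eta=\pi/2-\phi$ is always subleading relative to its counterpart near $\pi/2$. The asymmetry of the integration window $[\pi/2-\phi+\omega,\pi/2+\omega]$ with respect to $\pi/2$ will also require some care when $|\omega|$ is close to the threshold $\mu_\rho$, or when the window barely reaches the Proposition~\ref{l9} regime.
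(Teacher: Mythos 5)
Your plan is correct and is essentially the paper's proof: reduce via Lemma~\ref{t3} (with the fixed $L$, $S$ of $H(\phi,\alpha)$ giving a uniform threshold) to $\rho^{-2}\int\gamma_{H(\phi,\alpha)}^2$ over the rotated window, use central symmetry to replace $\gamma$ by $|K|$, invoke Propositions~\ref{l8} and~\ref{l9} and the reflection symmetry for the second corner, and split at $|\mu|=\rho^{(1-\alpha)/\alpha}$, where the plateau and the two tails each contribute $\rho^{-1-1/\alpha}$ and the truncated Proposition~\ref{l9} piece supplies the extra $\rho^{-1}|\omega|^{1/(\alpha-1)}$ when $|\omega|>\rho^{(1-\alpha)/\alpha}$. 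The only cosmetic difference is that the paper first restricts to $\omega\geq0$ by the reflection symmetry and shortens the window to $[-\phi/2,\omega]$, whereas you keep both corners and check the reflected one is dominated (for $\omega<0$ its mirror Proposition~\ref{l9} piece is exactly the main term, which your parenthetical already accounts for).
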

\begin{proof}
	By symmetry, we can restrict ourselves to study the case of $\omega\in[0,\varepsilon]$. Indeed, by Lemma~\ref{t3}, we have that, uniformly for every $\omega\in[0,\varepsilon]$, it holds
	\begin{equation*}
	\begin{split}
	\int_{I(\phi)}\int_{0}^{1}\left|\widehat{\mathds{1}}_{[\delta,\theta] H(\phi,\alpha)}\!\left(\rho\,\mathbf{u}(\tilde{\phi}+\omega)\right)\right|^2\de \delta\de \theta&\asymp\rho^{-2}\int_{-\phi/2}^{\phi/2}\gamma_{[\theta]H(\phi,\alpha)}^2\!\left(\tilde{\phi}+\omega,\rho^{-1}\right)\de \theta\\
	&=\rho^{-2}\int_{\omega-\phi}^{\omega}\gamma_{H(\phi,\alpha)}^2\!\left(\frac{\pi}{2}+\theta,\rho^{-1}\right)\de \theta\\
	&\asymp\rho^{-2}\int_{-\phi/2}^{\omega}\gamma_{H(\phi,\alpha)}^2\!\left(\frac{\pi}{2}+\theta,\rho^{-1}\right)\de \theta,
	\end{split}
	\end{equation*}
	where the last approximation follows from the symmetries of $H(\phi,\alpha)$. By Proposition~\ref{l8} and by Proposition~\ref{l9}, we get that
	\begin{equation*}
	\gamma_{H(\phi,\alpha)}\left(\frac{\pi}{2}+\theta,\rho^{-1}\right)\asymp\begin{cases}
		-\rho^{-1}\theta^{-1}&\text{if}\quad-\frac{\phi}{2}\leq\theta<-\rho^{\frac{1-\alpha}{\alpha}}\\ \rho^{-1/\alpha}&\text{if}\quad|\theta|\leq\rho^{\frac{1-\alpha}{\alpha}}\\ \rho^{-1/2}\theta^{\frac{2-\alpha}{2(\alpha-1)}}&\text{if}\quad\rho^{\frac{1-\alpha}{\alpha}}<\theta\leq\varepsilon
	\end{cases}.
	\end{equation*}
	Therefore, uniformly for every $\omega\in\left[0,\rho^{\frac{1-\alpha}{\alpha}}\right]$, we have
	\begin{equation*}
	\begin{split}
		\int_{-\frac{\phi}{2}}^{\omega}\gamma_{H(\phi,\alpha)}^2\left(\frac{\pi}{2}+\theta,\rho^{-1}\right)\de \theta&\asymp\int_{-\frac{\phi}{2}}^{-\rho^\frac{1-\alpha}{\alpha}}\rho^{-2}|\theta|^{-2}\de \theta+\int_{-\rho^{\frac{1-\alpha}{\alpha}}}^\omega\rho^{-2/\alpha}\de \theta\\
		&=\rho^{-2}\left(\rho^\frac{\alpha-1}{\alpha}-2\phi^{-1}\right)+\rho^{-2/\alpha}\left(\omega+\rho^{\frac{1-\alpha}{\alpha}}\right)\asymp \rho^{-\frac{\alpha+1}{\alpha}}.
	\end{split}
	\end{equation*}
	On the other hand, in the case of $\omega\in\left(\rho^{\frac{1-\alpha}{\alpha}},\varepsilon\right]$, we must take into account the additional term
	\begin{equation*}
	\begin{split}
	\int_{\rho^{\frac{1-\alpha}{\alpha}}}^{\omega}\gamma_{H(\phi,\alpha)}^2\left(\frac{\pi}{2}+\theta,\rho^{-1}\right)\de \theta&\asymp \int_{\rho^{\frac{1-\alpha}{\alpha}}}^{\omega}\rho^{-1}\theta^{\frac{2-\alpha}{\alpha-1}}\de \theta\\
	&=\rho^{-1}(\alpha-1)\left(\omega^{\frac{1}{\alpha-1}}-\rho^{-1/\alpha}\right),
	\end{split}
	\end{equation*}
	and the initial claim easily follows.
\end{proof}

	We have gathered the necessary estimate to prove the main result of this section, namely that, for the affine quadratic discrepancy, all the intermediate polynomial orders between $N^{2/5}$ and $N^{1/2}$ are achievable.
	\begin{thm}\label{InterTeo}
		Let $I(\phi)$ and $\alpha$ be as in \eqref{NotazInter}. There exists a convex body $C(\phi,\alpha)$ with piecewise-$\mathcal{C}^\infty$ boundary such that it holds
		\begin{equation*}
		\inf_{\# \mathcal{P}=N}\mathcal{D}_2(\mathcal{P},\, C(\phi,\alpha),\,I(\phi))\asymp N^{\frac{2\alpha}{1+4\alpha}}.
		\end{equation*} 
	\end{thm}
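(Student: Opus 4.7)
The plan is to take $C(\phi,\alpha)$ to be a rotation of the auxiliary body $H(\phi,\alpha)$, chosen so that the critical direction (the midpoint $\tilde{\phi}=\pi/2-\phi/2$ of the normal set at the angular point, after this rotation) has rational slope. The lower bound will then follow by applying Theorem~\ref{t4} with $h=1/\alpha$, and the upper bound by the two-scale lattice construction used in the proof of Theorem~\ref{t6}, rescaled with $L\asymp N^{2/(4+1/\alpha)}$ and $G\asymp N^{(2+1/\alpha)/(4+1/\alpha)}$, so that $L=N^{2\alpha/(4\alpha+1)}$.

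For the lower bound, note that after the change of variable $\theta'=\tilde{\phi}+\omega-\theta$,
\begin{equation*}
\int_{I(\phi)}\!\!\int_0^1\!\bigl|\widehat{\mathds{1}}_{[\delta,\theta]C(\phi,\alpha)}(\rho\,\mathbf{u}(\tilde{\phi}+\omega))\bigr|^2\de\delta\,\de\theta=\int_{\tilde{\phi}+\omega-I(\phi)}\!\!\int_0^1\!\bigl|\widehat{\mathds{1}}_{[\delta]C(\phi,\alpha)}(\rho\,\mathbf{u}(\theta'))\bigr|^2\de\delta\,\de\theta'.
\end{equation*}
Proposition~\ref{p1}, combined with the central symmetry of $C$, furnishes $\succcurlyeq\rho^{-3-1/\alpha}$ for every $\omega$ in a neighbourhood of $0$: in the first regime of the proposition this is exact, while in the second regime one has $|\omega|\geq\rho^{(1-\alpha)/\alpha}$, which forces $|\omega|^{1/(\alpha-1)}\geq\rho^{-1/\alpha}$, so the pre-factor $\rho^{-3}|\omega|^{1/(\alpha-1)}$ is at least $\rho^{-3-1/\alpha}$. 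The same bounds hold symmetrically around $\tilde{\phi}+\pi$. For $\omega$ bounded away from $\{\tilde{\phi},\tilde{\phi}+\pi\}$, the interval $\tilde{\phi}+\omega-I(\phi)$ meets the normals of the smooth part of $\partial C$ in a set of positive arc-length, so Lemma~\ref{CN} yields the stronger lower bound $\asymp\rho^{-3}$. Choosing $I$ in Theorem~\ref{t4} to be any closed sub-interval of $\mathbb{T}_{2\pi}$ separated from both critical directions thus verifies the hypothesis with $h=1/\alpha$, and its conclusion is $\succcurlyeq N^{2/(4+1/\alpha)}=N^{2\alpha/(4\alpha+1)}$.

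For the upper bound, we follow the recipe of the proof of Theorem~\ref{t6}: pick coprime integers $q_1,q_2$ with $\arctan(q_1/q_2)=\tilde{\phi}$ (secured by the initial rotation), and define $\mathcal{P}_N$ as the image of the grid $\{(\ell/L,g/G):\ell\in J_L,g\in J_G\}$ under the rotation $\sigma_{\tilde{\phi}}$ followed by the dilation $(q_1^2+q_2^2)^{1/2}$, with the scaling $L$ and $G$ as above and $N=GL$. Expanding via Parseval's identity and compensating with the rotation $[-\tilde{\phi}]$ on the dual lattice, we split the Fourier sum into the cone $V$ of frequencies with $\arg\mathbf{m}$ close to $\tilde{\phi}$—where Proposition~\ref{p1} gives the averaged decay $\asymp\rho^{-3-1/\alpha}$—and its complement $V^{\mathsf{c}}$, where the decay is $\asymp\rho^{-3}$. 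The $V$-sum contributes $G^2L^{-1-1/\alpha}$ and the $V^{\mathsf{c}}$-sum contributes $L$, both of order $N^{2\alpha/(4\alpha+1)}$ by the chosen balance $G^2L^{-1-1/\alpha}=L$. The reduction from $N$ of this special form to arbitrary $N$ is carried out verbatim as in the final paragraph of the proof of Theorem~\ref{t6}, via the greedy decomposition.

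The main difficulty lies in the $V$-sum: the faster decay $\rho^{-3-1/\alpha}$ in Proposition~\ref{p1} is only valid on a $\rho^{(1-\alpha)/\alpha}$-neighbourhood of the exact direction $\tilde{\phi}$, hence the necessity of the initial rotation to ensure $\tan\tilde{\phi}\in\mathbb{Q}$ so that the dual lattice is aligned with this direction. Within the $V$-sum one must then split at the threshold $|\mathbf{m}|\cdot|\arg\mathbf{m}-\tilde{\phi}|\approx\rho^{-1/\alpha}$ and handle the two regimes of Proposition~\ref{p1} separately; a careful computation shows that both pieces are bounded by the same $G^2L^{-1-1/\alpha}$ and combine cleanly, after which the remaining manipulations are a direct adaptation of the arguments in Section~\ref{S3}.
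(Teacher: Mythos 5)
Your proposal is correct and follows essentially the same route as the paper: the lower bound via Proposition~\ref{p1} (noting that in the second regime $\rho^{-3}|\omega|^{1/(\alpha-1)}\geq\rho^{-3-1/\alpha}$) feeding Theorem~\ref{t4} with $h=1/\alpha$, and the upper bound via the anisotropic $L\times G$ lattice with $L\asymp N^{2\alpha/(4\alpha+1)}$ aligned with the critical direction, plus the greedy decomposition for general $N$ --- the paper simply rotates $H(\phi,\alpha)$ so that the critical direction is the $x$-axis and uses an axis-parallel grid, which makes your rational-slope rotation and the tilted-lattice device from Theorem~\ref{t6} unnecessary but harmless. Two small points: the splitting threshold inside the critical cone should read $|\mathbf{m}|\cdot|\arg\mathbf{m}-\omega_0|\approx\rho^{1/\alpha}$ (equivalently $|m_2|^\alpha\approx|m_1|$), not $\rho^{-1/\alpha}$, and the intermediate-regime sum you assert ``combines cleanly'' is exactly the region $V_2$ computation the paper carries out explicitly, giving $\preccurlyeq L\preccurlyeq N^{2\alpha/(1+4\alpha)}$, consistent with your claim under the chosen balance $G^2L^{-1-1/\alpha}=L$.
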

\begin{proof}
	Let $H(\phi,\alpha)$ be as previously defined, and consider
 \begin{equation*}
     C(\phi,\alpha)=\left[\frac{\phi}{2}-\frac{\pi}{2}\right]\!H(\phi,\alpha).
 \end{equation*}
 In particular, notice that $C(\phi,\alpha)$ is symmetric with respect to the $x$-axis. Further, by Proposition~\ref{p1}, we have that, uniformly for every $\omega\in(-\varepsilon,\varepsilon)$, it holds
	\begin{equation}\label{e5}
	\int_{I(\phi)}\int_{0}^{1}\left|\widehat{\mathds{1}}_{[\delta,\theta] C(\phi,\alpha)}\left(\rho\,\mathbf{u}(\omega)\right)\right|^2\de \delta\de \theta\asymp\begin{cases}
		\rho^{-3-\frac{1}{\alpha}}&\text{if}\quad|\omega|\leq\rho^{\frac{1-\alpha}{\alpha}}\\
		\rho^{-3}\omega^{\frac{1}{\alpha-1}}&\text{if}\quad\rho^{\frac{1-\alpha}{\alpha}}<|\omega|\leq\varepsilon
	\end{cases},
	\end{equation}
    and, by symmetry, analogous estimates hold in the case of $\omega\in(\pi-\varepsilon,\pi+\varepsilon)$. On the other hand, since by construction $\partial C(\phi,\alpha)$ is $\mathcal{C}^\infty$ everywhere except at the origin and at its symmetric counterpart, by Lemma~\ref{t3} and Corollary~\ref{c2}, we have that, uniformly for every
 \begin{equation*}
     \omega\in[\varepsilon,\pi-\varepsilon]\cup[\pi+\varepsilon,2\pi-\varepsilon],
 \end{equation*}
 it holds
	\begin{equation}\label{e8}
		\int_{I(\phi)}\int_{0}^{1}\left|\widehat{\mathds{1}}_{[\delta] C(\phi,\alpha)}\left(\rho\,\mathbf{u}(\omega-\theta)\right)\right|^2\de \delta\de \theta\asymp\rho^{-3}.
	\end{equation}
	In particular, notice that the hypotheses of Theorem~\ref{t4} are satisfied, and we may apply it in the case of $h=1/\alpha$. Consequently, we get the lower bound
	\begin{equation*}
	\inf_{\# \mathcal{P}=N}\mathcal{D}_2(\mathcal{P},\, C(\phi,\alpha),\,I(\phi))\succcurlyeq
	N^{\frac{2\alpha}{1+4\alpha}}.
	\end{equation*}\par
 
    Now, we turn our attention to the upper bound and show it by constructing suitable samplings. First, let us do it for a number of $N$ points such that
    \begin{equation*}
        N=\lfloor n^{\frac{1+2\alpha}{1+4\alpha}}\rfloor\,\lfloor n^{\frac{2\alpha}{1+4\alpha}}\rfloor\quad\text{for some}\quad n\in\mathbb{N}.
    \end{equation*}
    Hence, set
    \begin{equation*}
        G=\lfloor n^{\frac{1+2\alpha}{1+4\alpha}}\rfloor,\quad L=\lfloor n^{\frac{2\alpha}{1+4\alpha}}\rfloor,\quad J_G=[0,G-1]\cap\mathbb{N},\quad\text{and}\quad J_L=[0,L-1]\cap\mathbb{N}. 
    \end{equation*}
    Consider the set of points $\mathcal{P}_N\subset\mathbb{T}^2$ defined by
	\begin{equation*}
	\mathcal{P}_N=\left\{\mathbf{p}_j\right\}_{j=1}^N=\{\mathbf{p}_{\ell,g}\}_{\ell\in J_L,\,g\in J_G}\quad\text{with}\quad \mathbf{p}_{\ell,g}=\left(\frac{\ell}{L},\,\frac{g}{G}\right),
	\end{equation*}
	where the coordinates of $\mathbf{p}_{\ell,g}$ are to be intended modulo $1$. Again, by Parseval's identity, we get
	\begin{equation*}
	\int_{\mathbb{T}^2}\left|\mathcal{D}(\mathcal{P}_N, [\boldsymbol{\tau},\delta, \theta] C(\phi,\alpha))\right|^2\de \boldsymbol{\tau}=\sum_{\mathbf{m}\neq(0,0)}\left|\widehat{\mathds{1}}_{[\delta, \theta] C(\phi,\alpha)}(\mathbf{m})\right|^2\left|\sum_{g\in J_G}\sum_{\ell\in J_L}e^{2\pi i \mathbf{m}\cdot \mathbf{p}_{\ell,g}}\right|^2,
	\end{equation*}
	and we observe that
	\begin{equation*}
	\sum_{g\in J_G}\sum_{\ell\in J_L}e^{2\pi i\left(m_1\frac{\ell}{L}+m_2\frac{g}{G}\right)}=
	\begin{cases}
	GL &\text{if}\quad m_1\in L\mathbb{Z}\quad\text{and}\quad m_2\in G\mathbb{Z}\\
		0 &\text{else}
	\end{cases}.
	\end{equation*}
	Hence, we can consider
	\begin{equation*}
	\mathbf{m}=(Ln_1,Gn_2)\quad \text{with}\quad \mathbf{n}\in\mathbb{Z}^2,
	\end{equation*}
	and split the set
 \begin{equation*}
     \mathcal{R}=(L\mathbb{Z}\times G\mathbb{Z})\setminus\{\mathbf{0}\}
 \end{equation*}
 into the regions
	\begin{equation*}
	\begin{split}
	V_1&=\left\{\mathbf{m}\in\mathcal{R}\,\colon\,|m_2|^\alpha\leq|m_1|\right\},\\
	V_2&=\left\{\mathbf{m}\in\mathcal{R}\,\colon\,|m_2|\leq|m_1|<|m_2|^\alpha\right\},\\
	V_3&=\left\{\mathbf{m}\in\mathcal{R}\,\colon\,|m_1|<|m_2|\right\}.
	\end{split}
	\end{equation*}
	Then, we write
	\begin{equation}\label{e7}
	\begin{split}
		&\mathcal{D}_2(\mathcal{P}_N,\, C(\phi,\alpha),\,I(\phi))=\\
    &=\int_{I(\phi)}\int_{0}^{1}G^2L^2\sum_{\mathbf{m}\in\mathcal{R}}\left|\widehat{\mathds{1}}_{[\delta,\theta] C_\alpha^\theta}(\mathbf{m})\right|^2\de\delta\de \theta\\
		&=G^2L^2\left(\sum_{\mathbf{m}\in V_1}+\sum_{\mathbf{m}\in V_2}+\sum_{\mathbf{m}\in V_3}\right)\int_{I(\phi)}\int_{0}^{1}\left|\widehat{\mathds{1}}_{[\delta, \theta] C(\phi,\alpha)}(\mathbf{m})\right|^2\de\delta\de \theta.
	\end{split}
	\end{equation}
        We exploit \eqref{e5} and \eqref{e8} in order to study the three sums in the latter equation. In this case, we must consider
        \begin{equation*}
            \rho=|\mathbf{m}|\quad\text{and}\quad\tan\omega=\frac{m_2}{m_1}.
        \end{equation*}
        We notice that for $\omega\in[-1,1]$ it holds $\tan\omega\approx\omega$, and consequently, with a bit of rearrangement, we can rewrite the estimates in \eqref{e5} and \eqref{e8} as
        \begin{equation}\label{e5-2}
	\int_{I(\phi)}\int_{0}^{1}\left|\widehat{\mathds{1}}_{[\delta,\theta] C(\phi,\alpha)}\left(\mathbf{m}\right)\right|^2\de \delta\de \theta\asymp\begin{cases}
		|m_1|^{-3-\frac{1}{\alpha}}&\text{if }|m_2|^\alpha\leq|m_1|\\
		|m_1|^{\frac{2-3\alpha}{\alpha-1}}|m_2|^{\frac{1}{\alpha-1}}&\text{if }|m_2|\leq|m_1|<|m_2|^\alpha\\
        |m_2|^{-3}
        &\text{if }|m_1|<|m_2|
	\end{cases}.
	\end{equation}
	By the latter, for the first sum in the last term of \eqref{e7}, we get
	\begin{align*}
		&G^2L^2\sum_{\mathbf{m}\in V_1}\int_{I(\phi)}\int_{0}^{1}\left|\widehat{\mathds{1}}_{[\delta, \theta] C(\phi,\alpha)}(\mathbf{m})\right|^2\de\delta\de \theta\\
        &\preccurlyeq G^2L^2\sum_{\mathbf{m}\in V_1}|m_1|^{-3-\frac{1}{\alpha}}\\
		&\preccurlyeq G^2 L^2\sum_{n_1=1}^{+\infty}\,\sum_{n_2=0}^{n_1^{1/\alpha}L^{1/\alpha}G^{-1}}(Ln_1)^{-3-\frac{1}{\alpha}}\\
        &\preccurlyeq G^2 L^{-\frac{1+\alpha}{\alpha}}\sum_{n_1=1}^{+\infty}n_1^{-3-\frac{1}{\alpha}}\left(1+n_1^{1/\alpha}L^{1/\alpha}G^{-1}\right)\\
		&\preccurlyeq\left(G^2L^{-\frac{1+\alpha}{\alpha}}+GL^{-1}\right)\preccurlyeq N^{\frac{2\alpha}{1+4\alpha}}.
	\end{align*}
	
        For the second sum in the last term of \eqref{e7}, by applying \eqref{e5-2}, we get
	\begin{align*}
		&G^2L^2\sum_{\mathbf{m}\in V_2}\int_{I(\phi)}\int_{0}^{1}\left|\widehat{\mathds{1}}_{[\delta, \theta] C(\phi,\alpha)}(\mathbf{m})\right|^2\de\delta\de \theta\\
        &\preccurlyeq G^2L^2\sum_{\mathbf{m}\in V_2}|m_1|^{\frac{2-3\alpha}{\alpha-1}}|m_2|^{\frac{1}{\alpha-1}}\\
		&\preccurlyeq G^2 L^2\sum_{n_2=1}^{+\infty}\,\sum_{n_1=n_2GL^{-1}}^{n_2^\alpha G^\alpha L^{-1}}(Ln_1)^{\frac{2-3\alpha}{\alpha-1}}(Gn_2)^{\frac{1}{\alpha-1}}\\
		&\preccurlyeq G^{\frac{2\alpha-1}{\alpha-1}} L^{\frac{-\alpha}{\alpha-1}}\sum_{n_2=1}^{+\infty}n_2^{\frac{1}{\alpha-1}}\sum_{n_1=n_2GL^{-1}}^{+\infty}n_1^{\frac{2-3\alpha}{\alpha-1}}\\
		&\preccurlyeq G^{\frac{2\alpha-1}{\alpha-1}} L^{\frac{-\alpha}{\alpha-1}}\sum_{n_2=1}^{+\infty}n_2^{\frac{1}{\alpha-1}}\left(n_2GL^{-1}\right)^{\frac{1-2\alpha}{\alpha-1}}\\
        &\preccurlyeq L\sum_{n_2=1}^{+\infty}n_2^{-2}\preccurlyeq L\preccurlyeq N^{\frac{2\alpha}{1+4\alpha}}.
	\end{align*}
	Finally, for the last sum in the last term in \eqref{e7}, again by applying \eqref{e5-2}, we get
	\begin{align*}
		&G^2L^2\sum_{\mathbf{m}\in V_3}\int_{I(\phi)}\int_{0}^{1}\left|\widehat{\mathds{1}}_{[\delta, \theta] C(\phi,\alpha)}(\mathbf{m})\right|^2\de\delta\de \theta\\
        &\preccurlyeq G^2L^2\sum_{\mathbf{m}\in V_3}|m_2|^{-3}\\
		&\preccurlyeq G^2 L^2\sum_{n_2=1}^{+\infty}\,\sum_{n_1=0}^{n_2GL^{-1}}(Gn_2)^{-3}\\
        &\preccurlyeq G^{-1}L^2\sum_{n_2=1}^{+\infty}n_2^{-2}GL^{-1}\preccurlyeq L\preccurlyeq N^{\frac{2\alpha}{1+4\alpha}}.
	\end{align*}
	Hence, we can conclude that the upper bound holds for all $N$ of the form $N=\lfloor n^{\frac{1+2\alpha}{1+4\alpha}}\rfloor\,\lfloor n^{\frac{2\alpha}{1+4\alpha}}\rfloor$.\par
	Last, in order to prove the initial claim holds for every $N\in\mathbb{N}$, it is enough to repeat the argument at the end of Theorem~\ref{t6} with adjusted exponents.
\end{proof}

\section*{Acknowledgements} I am grateful to my advisors, Luca Brandolini, Leonardo Colzani, Giacomo Gigante, and Giancarlo Travaglini, for their support and all the valuable discussions.

\newpage

\bibliography{main.bib}
\bibliographystyle{alpha}

\end{document}